\documentclass[a4paper,twoside]{article}


\usepackage{color}

\definecolor{darkgreen}{cmyk}{1,0,1,.2}
\definecolor{m}{rgb}{1,0.1,1}
\definecolor{pink}{rgb}{1,0,1} 

\long\def\red#1{#1}

\long\def\green#1{#1}

\usepackage{amsmath,amsthm,amsfonts,latexsym,amscd,amssymb,enumerate}
\usepackage[pdftex]{hyperref}
\usepackage[all,arc,cmtip]{xy}

\setcounter{secnumdepth}{2}
\setcounter{tocdepth}{3}

\swapnumbers
\theoremstyle{plain}
\newtheorem{theorem}{Theorem}[section]
\newtheorem{lemma}[theorem]{Lemma}
\newtheorem{corollary}[theorem]{Corollary}
\newtheorem{proposition}[theorem]{Proposition}

\theoremstyle{definition}
\newtheorem{definition}[theorem]{Definition}

\theoremstyle{remark}
\newtheorem{remark}[theorem]{Remark}

\newcommand{\PD}{\mathcal{PD}}
\newcommand{\db}{\mathcal{D}}

\newcommand{\reals}{\mathbb{R}}
\newcommand{\complexs}{\mathbb{C}}
\newcommand{\C}{\mathbb{C}}

\newcommand{\integers}{\mathbb{Z}}

\renewcommand{\j}{\jmath}
\DeclareMathOperator{\id}{id}
\newcommand{\boundary}[1]{\partial#1}

\newcommand{\abs}[1]{\left\lvert#1\right\rvert} 

\newcommand{\tensor}{\otimes}
\newcommand{\spg}{$G$-$\spin^c$ }
\newcommand{\into}{\hookrightarrow}
\newcommand{\onto}{\twoheadrightarrow}
\newcommand{\iso}{\cong}
\newcommand{\disjointunion}{\amalg}

\DeclareMathOperator{\End}{End}    

\DeclareMathOperator{\spin}{spin}
\DeclareMathOperator{\Spin}{Spin}

\DeclareMathOperator{\pr}{pr}


\DeclareMathOperator{\KK}{KK} 

\newcommand{\forget}[1]{}

\newcommand{\kk}[1]{[#1]}
\newcommand{\term}[1]{\emph{#1}}
\newcommand{\PinC}{\ensuremath{\Pin^c}}
\newcommand{\spinC}{\ensuremath{\spin^c}}
\newcommand{\SpinC}{\ensuremath{\Spin^c}}
\newcommand{\Sred}[1]{{S\hspace{-0.55em}/\hspace{0.15em}}_{#1}}
\newcommand{\Sreddual}[1]{{\overline{S\hspace{0.15em}}\hspace{-0.69em}/\hspace{0.05em}}_{#1}}
\newcommand{\Sreddualeven}[1]{{\overline{S\hspace{0.15em}}\hspace{-0.69em}/\hspace{0.05em}}_{#1}^{\hspace{0.25em}+}}
\newcommand{\Sfull}[1]{S_{#1}}
\DeclareMathOperator{\Pin}{Pin}
\DeclareMathOperator{\collapse}{collapse}
\DeclareMathOperator{\incl}{incl}
\DeclareMathOperator{\mul}{mul}
\DeclareMathOperator{\op}{op}
\DeclareMathOperator{\Cliff}{Cliff}
\newcommand{\CliffordC}{\complexs\ell}

{\catcode`@=11\global\let\c@equation=\c@theorem}




\allowdisplaybreaks[2]

\begin{document}
\pagestyle{myheadings}
\markboth{Paul Baum, Herv\protect{\'e} Oyono, Thomas Schick}{Equivariant geometric K-homology}

\date{}

\title{Equivariant geometric K-homology for compact Lie group actions}

\author{Paul
  Baum\thanks{\protect\href{mailto:baum@math.psu.edu}{email:~baum@math.psu.edu}; 
    \protect\href{http://www.math.psu.edu/baum}{www:~http://www.math.psu.edu/baum}\protect\\
Partially supported by the Polish Government grant N201 1770 33 and by an NSF research grant}\\ Pennsylvania State University\protect\\ State
  College\protect\\ USA \and Herv\'e
  Oyono-Oyono\thanks{\protect\href{mailto:oyono@math.cnrs.fr}{email:
      oyono@math.cnrs.fr}; \protect\href{http://math.univ-bpclermont.fr/~oyono/}{www:~http://math.univ-bpclermont.fr/~oyono/}}\\ Universit\'e Blaise 
  Pascal\\
  Clermont-Ferrand\\  
France \and Thomas Schick\thanks{
\protect\href{mailto:schick@uni-math.gwdg.de}{e-mail:
  schick@uni-math.gwdg.de};
\protect\href{http://www.uni-math.gwdg.de/schick}{www:~http://www.uni-math.gwdg.de/schick}\protect\\
partially funded by the Courant Research Center "`Higher order structures in Mathematics"'
within the German initiave of excellence
}\\
Georg-August-Universit\"at\protect\\ G{\"o}ttingen\\
Germany
\and
Michael Walter\thanks{\protect\href{mailto:michael.walter@gmail.com}{email: michael.walter@gmail.com}}\\
Georg-August-Universit\"at\protect\\ G{\"o}ttingen\\
Germany
}
\maketitle

\begin{abstract}
  Let $G$ be a compact Lie-group, $X$ a compact $G$-CW-complex. We define
 equivariant geometric K-homology groups $K^G_*(X)$, using an obvious
 equivariant  version of the $(M,E,f)$-picture of Baum-Douglas for
 K-homology.
 We define explicit natural transformations to and from equivariant K-homology
 defined  via 
 KK-theory (the ``official'' equivariant K-homology groups) and show that
 these are isomorphisms.
\end{abstract}

\section{Introduction}

K-homology is the homology theory dual to K-theory. For index theory, concrete
geometric realizations of K-homology are of relevance, as already pointed out
by Atiyah \cite{atiyah69}. In an abstract analytical setting, such a
definition has been given by Kasparov \cite{kasparov75}. About the same time,
Baum and Douglas \cite{baum-douglas82} proposed a very geometric picture of
K-homology (using manifolds, bordism, and so on), and defined a simple map to
analytic K-homology. This map was ``known'' to be an isomorphism. However, a
detailed proof of this was only published in \cite{BHS}.

The relevance of a geometric picture of K-homology extends to
equivariant situations. Kasparov's analytic definition of K-homology
immediately does allow for such a generalization, and this is considered to be
the ``correct'' definition. The paper \cite{BHS} is a spin-off of work on
a Baum-Douglas picture for $\Gamma$-equivariant K-homology, where
$\Gamma$ is a discrete group acting properly on a $\Gamma$-CW-complex. This
requires considerable effort because of the difficulty to find equivariant
vector bundles in this case. Emerson and Meyer give a very general geometric
description even of bivariant equivariant K-theory, provided enough such vector
bundles exist ---compare \cite{Emerson-Meyer}.

In the present paper, we give a definition of $G$-equivariant K-homology for
the case that $G$ is a compact Lie group, in terms of the ``obvious''
equivariant version of the $(M,E,f)$-picture of Baum and Douglas. Our main
result is that these groups 
indeed are canonically isomorphic to the standard analytic equivariant
K-homology groups. The main point of the construction is its simplicity, we
were therefore not interested in utmost generality.

In the case of a compact Lie group, equivariant vector
bundles are easy to come by, and therefore the work is much easier than in the
case of a discrete proper action. We will in part follow closely the work of
\cite{BHS}, and actually will omit detailed descriptions of the equivariant
generalizations where they are obvious. In other parts, however, we will
deviate from the route taken in \cite{BHS} and indeed give simpler
constructions. Much of our theory is an equivariant (and more geometric)
version of a general theory of Jakob \cite{MR1624352}. These constructions
have no generalization to proper actions of
discrete groups and were therefore not used in \cite{BHS}. Moreover, we will
use the full force of Kasparov's KK-theory in some of our analytic
arguments. The diligent reader is then asked to supply full arguments where
necessary.





\section{Equivariant geometric K-homology}

Let $G$ be a compact Lie group, $(X,Y)$ be a compact $G$-$CW$-pair with a 
$G$-homotopy retraction $(X,Y)\xrightarrow{j} (W,\boundary W)\xrightarrow{q}
(X,Y)$. We require that $(W,\boundary W)$ is  a smooth 
$G$-$\spin^c$ manifold with boundary. $G$-homotopy retraction means that $qj$
is $G$-homotopy equivalent to the identity (and the homotopy preserves $Y$). 

\begin{lemma}
  Every finite $G$-CW-pair, more generally every compact $G$-ENR and in
  particular every smooth compact $G$-manifold
  (absolute   or relative  to its boundary) has the required property,
  i.e.~is such a homotopy retraction of a manifold with boundary.
\end{lemma}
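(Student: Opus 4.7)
The plan is to equivariantly embed $X$ in a finite-dimensional unitary $G$-representation $V$ and construct $W$ as a $G$-invariant smooth tubular thickening inside $V$; the $G$-invariant complex structure of $V$ then automatically equips $W$ with a $G$-invariant $\spin^c$ structure. The main ingredients are the Mostow--Palais equivariant embedding theorem, the existence of $G$-equivariant neighborhood retractions for compact $G$-ENRs, $G$-averaging together with Sard's theorem, and the complex structure carried by any unitary $G$-representation.

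First, for the absolute case: by Mostow--Palais, equivariantly embed $X \into V$ with $V$ a unitary $G$-representation. Compactness and the $G$-ENR property---which covers all three classes of spaces in the lemma---yield a $G$-invariant open neighborhood $U \subset V$ of $X$ together with a $G$-equivariant retraction $r \colon U \to X$. Construct a smooth $G$-invariant function $\rho \colon V \to [0, \infty)$ vanishing on $X$ (via a partition-of-unity plus $G$-averaging), then by Sard's theorem pick a small regular value $\epsilon > 0$ with $\rho^{-1}([0, \epsilon]) \subset U$ and set $W := \rho^{-1}([0, \epsilon])$. This $W$ is a compact $G$-invariant codimension-$0$ smooth submanifold-with-boundary of $V$ containing $X$. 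Since $V$ carries a $G$-invariant complex structure, $W$ inherits a $G$-invariant almost complex structure, hence in particular a $G$-invariant $\spin^c$ structure. With $j \colon X \into W$ the inclusion and $q := r|_W \colon W \to X$, one has $qj = \id_X$.

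Second, for the relative case $(X, Y)$ with $Y$ nonempty, we additionally need $j(Y) \subset \partial W$ and $q(\partial W) \subset Y$. Using that $(X, Y)$ is a $G$-cofibration (automatic for $G$-CW-pairs and for smooth $G$-manifolds with boundary), attach an external collar $Y \times [-1, 0]$ to $X$ along $Y = Y \times \{0\}$ to form $X' := X \cup_Y (Y \times [-1, 0])$, which is $G$-homotopy equivalent to $X$ via collapsing the attached collar, with ``outer end'' $Y \times \{-1\} \simeq Y$. Embed $X'$ equivariantly in $V \oplus \reals$ (with trivial $G$-action on $\reals$) so that $X \subset V \oplus [0, \infty)$ with $Y \subset V \oplus \{0\}$ and the collar is sent into $V \oplus [-1, 0]$ with $Y \times \{-1\} \subset V \oplus \{-1\}$. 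Run the absolute-case construction in $V \oplus \reals$, then cut $W$ along a suitable $G$-invariant smooth hypersurface (for instance $\{t = -1\}$ or a $G$-invariant perturbation) to extract a compact $G$-$\spin^c$ manifold with boundary whose boundary lies in a tubular neighborhood of $Y \times \{-1\} \simeq Y$ and hence retracts onto $Y$ under $r$ composed with the collapse $X' \to X$.

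The main obstacle will be cleanly realizing the relative case: arranging the equivariant cut (or equivalently a suitable choice of defining function) so that the boundary of $W$ retracts exactly onto $Y$ rather than onto other parts of $X$, and smoothing any resulting corners equivariantly. The absolute-case argument, by contrast, is a routine equivariant adaptation of standard differential topology (equivariant embedding, averaging, regular values via Sard, and complex-to-$\spin^c$).
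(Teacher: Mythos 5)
Your construction runs into a genuine problem already in the \emph{absolute} case. Setting $W := \rho^{-1}([0,\epsilon])$ produces a compact $G$-manifold with \emph{nonempty} boundary $\partial W = \rho^{-1}(\epsilon)$. However, the "required property" in the definition preceding the lemma asks for a retraction of \emph{pairs} $(X,Y) \xrightarrow{j} (W,\partial W) \xrightarrow{q} (X,Y)$, so in the absolute case $Y = \emptyset$ one needs $q(\partial W) \subseteq \emptyset$, i.e.~$\partial W = \emptyset$. (This is not pedantry: the transformation $\beta$ in Definition \ref{def:of_trafos} feeds the retraction directly into a geometric $K$-cycle, where $f(\partial M) \subseteq Y$ is required.) The paper fixes this by passing to the \emph{double} of the compact sublevel set $V$ and retracting via the fold map; your construction stops one step short of that.

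For the relative case, your route is genuinely different from the paper's. You attach an external collar $X' = X \cup_Y (Y \times [-1,0])$, embed in $V \oplus \reals$, and propose to cut the resulting thickening along (a perturbation of) $\{t=-1\}$. The paper instead forms the $\integers/2$-symmetric double $X \cup_Y X$, embeds $G \times \integers/2$-equivariantly into $E \times \reals$ (with $\integers/2$ acting by $-1$ on $\reals$), builds the closed double $W^+$ for that embedding, and takes $W := W^+/\integers/2$. This quotient is automatically a manifold with boundary, with $\partial W$ equal to the image of the $\integers/2$-fixed locus, and the $\integers/2$-equivariance of the retraction forces $\partial W$ to retract precisely onto $Y$; corners and transversality issues simply do not arise. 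By contrast, your cut-along-$\{t=-1\}$ approach has to separately arrange that the cut hypersurface is a $G$-invariant regular level inside the closed double, that the resulting piece's only boundary is that cut (which requires you to have doubled first, the missing step noted above), and that $r$ followed by the collar-collapse sends that cut exactly into $Y$. You are right to flag these as the main obstacles, but as written they are left unresolved, and the $\integers/2$-equivariant double is precisely the device the paper introduces to avoid having to address them directly.

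In short: the opening moves (Mostow--Palais embedding into a unitary $G$-module, $G$-ENR neighborhood retraction, $G$-averaged smooth cutoff and Sard, complex structure giving the $\spin^c$-structure) match the paper. The missing double in the absolute case is a real gap; the relative case is a different and only sketched strategy, whereas the paper's $\integers/2$-quotient construction makes the boundary-to-$Y$ retraction structural rather than something to be engineered after the fact.
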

\begin{proof}
  
  This is trivial for a $G$-$\spin^c$ manifold.

  The following argument is partly somewhat sketchy, we leave it to the reader
  to add the necessary details.

  In general, by \cite{MR0394550}, every finite $G$-CW-complex $X$ has a
  (closed) 
  $G$-embedding into a finite dimensional complex linear $G$-space (using
  \cite{MR0087037}) with an open $G$-invariant neighborhood $U$ with a
  $G$-equivariant 
  retraction $r\colon U\to X$ onto $X$. Even better, every 
  such $G$-embeddings admits such a neighborhood retraction, using
  \cite{MR0033830}. In other words, a finite
  $G$-CW-complex is a $G$-ANR. By 
  \cite{MR2350547}, the converse is true up to $G$-homotopy equivalence. 

   {A complex $G$-representation in particular has a $G$-invariant
   $\spin^c$-structure, and therefore so has $U$.}
  Choose
  a $G$-invariant metric on $U$, {e.g.~the metric induced by a
   $G$-invariant 
   Hermitean metric on the $G$-representation}. Let $f$ be the distance to
  $X$, a $G$-invariant map on $U$. Choose $r>0$ such that $f^{-1}([0,r])$ is
  compact. This is possible since $X$ is compact: choose $r$ smaller than the
  distance from $X$ to the complement of $U$. Choose a smooth $G$-invariant
  approximation $g$ to $f$, i.e.~$g$ has to be sufficiently close to $f$ in
   the 
  chosen metric. {To construct $g$, we can first choose a
   non-equivariant approximation 
    and then average it to make it $G$-invariant.}  Choose a regular value
  $0<r'<r$ such that $V:= f^{-1}((-\infty,r'])$ is a neighborhood of $X$ and
   is a 
  compact manifold (necessarily a $G$-manifold) with boundary. Its double $W$
   is a 
  $G$-manifold with inclusion $i\colon X\to W$ (into one of the two copies) and
  with retraction $W\to X$ obtained as the composition of the ``fold map'' and
  the retraction $r$ (restricted to $V$).

  This covers the absolute case.

  If $(X,Y)$ is a $G$-CW-pair, choose an embedding $j$ of $X$ into some linear
  $G$-space $E$ of real dimension $n$ (with $\spin^c$-structure), and a
  $G$-invariant distance
  function. The distance to $Y$ then gives a $G$-invariant function $h\colon
  X\to [0,\infty)$ with $h(x)=0$ if and only if $x\in Y$. Consider $X\cup_Y X$
  with the obvious $\integers/2$-action by exchanging the two copies of
  $X$, and $G$-action by using the given action on both halves. Extend $h$ to
  a $G\times \integers/2$-equivariant map to $\reals$ with 
  $\integers/2$-action given by multiplication with $-1$ (and with trivial
  $G$-action). Let $q\colon X\cup_Y X\to X$ be the folding map.  Taking the product
 of $j\circ q$ with $h\colon X\cup_YX \to \reals$ (with trivial $G$-action on
 $\reals$), we obtain a $G{\times  \integers/2}$-embedding of $X\times_YX$ into $E\times \reals$.

{Construct now the $G\times \integers/2$-neighborhoood retract $U^+$
 and the manifold $W^+$ for this
 embedding as above. By construction, there is a well defined
 $\reals$-coordinate $r$ for all points in these neighborhoods and
 also in $W^+$ (a priori only a continuous function).  
The subset $\{r=0\}$ consisting exactly of the $\integers/2$-fixed points. The
 $\integers/2$-action 
 on $W^+$ is smooth. For each $\integers/2$-fixed point $x\in U^+$, (being an
 open subset of $E\times \reals$ with $\integers/2$-action fixing $E$ and
 acting as $-1$ on $\reals$) $T_xU^+\iso\reals^n\oplus \reals_-$ as
 $\integers/2$-representation (where $\reals$ denotes the trivial
 $\integers/2$-representation and $\reals_-$ denotes the non-trivial
 $\integers/2$-representation). The same is then true for any 
 $\integers/2$-submanifold with boundary of codimension $0$, and also for a double of such a
 manifold, like $W^+$.  }

{Because of this special structure of the $\integers/2$-fixed points it
  follows that $W:=W^+/\integers/2$ obtains the structure of a $G$-manifold
  with 
  boundary, here homeomorphic to the subset $\{r\ge 0\}$ (as this is a
  fundamental domain for the action of $\integers/2$). The boundary of
  $W=W^+/\integers/2$ is exactly the (homeomorphic) image of the fixed point
  set $\{r=0\}$. The 
  $G\times\integers/2$-equivariant retraction of $W^+$ onto $X\cup_Y X$
  descends to a $G$-equivariant
  retraction of $W$ onto $X=X\cup_YX/\integers/2$; the
  $\integers/2$-equivariance of the
  retraction implies that $\boundary W$, the image of the fixed point set is
  mapped under this retraction to $Y$ (the image of the $\integers/2$-fixed
  point set of $X\cup_Y X$), so we really get a retraction of the pair
  $(W,\boundary W)$ onto $(X,Y)$.}

\end{proof}

\begin{definition}\label{def:cycles}
  A \emph{cycle for the geometric equivariant $K$-homology of $(X,Y)$} is a triple $(M,E,f)$,
  where 
  \begin{enumerate}
  \item $M$ is a compact \green{smooth $G$-$\spin^c$ manifold (possibly with
      boundary and components of different dimensions)}
  \item  $E$ is a
    $G$-equivariant Hermitean vector bundle on $M$
  \item  $f\colon M\to X$ is
    a continuous $G$-equivariant map such that $f(\boundary M)\subset Y$.
\end{enumerate}
  Here, a $G$-$\spin^c$-manifold is a $\spin^c$-manifold with a given
  $\spin^c$-structure ---given as in \cite[Section 4]{BHS} in terms of a
  complex spinor bundle for $TM$, now with a $G$-action lifted to  and
  compatible with all the structure.

  We define isomorphism of cycles $(M,E,f)$ in the obvious way, given by maps
  which preserve all the structure (in particular also the $G$-action).

  The set of isomorphism classes becomes a monoid under the evident operation
  of disjoint union of cycles, we write this as $+$. This addition is obviously
  commutative. 
\end{definition}

More details about $\spin^c$-structures can be found in \cite[Section
4]{BHS}. All statements there have obvious $G$-equivariant  generalizations.

\begin{definition} 
If $(M,E,f)$ is a $K$-cycle for $(X,Y)$, then its \emph{opposite} $-(M,E,f)$ is the
$K$-cycle $(-M, E, f)$, where $-M$ denotes the manifold $M$ equipped with
the opposite $\spin^c$-structure.
\end{definition}

\begin{definition}
\label{bordism-def}
A \emph{bordism} of  K-cycles for the pair $(X,Y)$ consists of the following
data: 
\begin{enumerate}[\rm (i)]
\item A smooth, compact $G$-manifold $L$, equipped with a
  $G$-$\spin^c$-structure. 
\item A smooth, Hermitian $G$-vector bundle $F$ over $L$.
\item A continuous $G$-map $\Phi \colon L\to X$.
\item A smooth map $G$-invariant map $f\colon \partial L \to \reals$ for which
  $\pm 1 $ are regular values, and for which $\Phi [ f ^{-1} [ -1,1]]\subseteq
  Y$. 
\end{enumerate}
The sets $M_{+} = f^{-1} [+1,+\infty)$ and $M_{-} = f^{-1} (-\infty, -1]$ are
manifolds with boundary, and we obtain two K-cycles $(M_{+},
F|_{M_{+}}, \Phi|_{M_{+}})$ and  
$(M_{-}, F|_{M_{-}}, \Phi|_{M_{-}})$ for the pair $(X,Y)$.  We say that the
first is bordant to the opposite of the second. {We follow
  here \cite[Definition 5.5]{BHS}, and as above the role of $f$ is to be able
  to talk of bordism of manifolds with boundary without having to introduce
  manifolds with corners.}
\end{definition}

\begin{definition}\label{def:modification}
Let $M$ be a $G$-$\spin^c$-manifold and let $W$ be a $G$-$\spin^c$-vector
bundle of even dimension
over $M$.  Denote by $\mathbf 1$ the trivial, rank-one real vector bundle
(with fiberwise trivial $G$-action).    The direct sum $W\oplus \mathbf 1$ is a
$G$-$\spin^c$-vector bundle, and the total space of this bundle is equipped
with a $G$-$\spin^c$ structure in the canonical way, as in \cite[Definition
5.6]{BHS}. 

{Let $Z$ be the unit sphere bundle of the bundle $ \mathbf 1 \oplus W$ with
bundle projection $\pi$. 
    Observe that an element of $Z$ has the form $(t,w)$ with $w\in W$, $t\in
    [-1,1]$ such that $t^2+\abs{w}^2=1$. The subset $\{t=0\}$ is canonically
    identified with the unit sphere bundle of $W$, $\{t\ge 0\}$ is called the
    ``northern hemisphere'', $\{t\le 0\}$ the ``southern hemisphere''. The map
    $s\colon M\to Z; m\mapsto (1,z(m))$ is called the \emph{north pole
      section}, where $z\colon M\to W$ is the zero section.
Since $Z$ is \green{contained in} the boundary of the disk bundle, we may equip it with a natural
$G$-$\spin^c$-structure by first restricting  the given
$G$-$\spin^c$-structure on 
the total space of $\mathbf 1\oplus W$  to the disk bundle, and then taking
the 
boundary of this $\spin^c$-structure to obtain a $\spin^c$-structure on the
sphere bundle. }

  We construct a bundle  $F$ over $Z$ via \emph{clutching}: if $S_W$ is the spinor bundle of
  $W$ (a bundle over $M$), then $F$ is obtained from $\pi^*S^*_{W,+}$
  over the northern hemisphere of $Z$ and $\pi^*S^*_{W,-}$ over the southern
  hemisphere of $Z$ by gluing along the intersection, the unit sphere bundle
  of $W$, using Clifford
  multiplication 
  with the respective vector of $W$. It follows from the discussion in Appendix \ref{sec:bott-thom}
  that this bundle is isomorphic to $S^*_{v,+}$, 
the dual of the even-graded part of the $\integers/2$-graded spinor
bundle $S_v$. {The latter in turn is obtain from  the
  vertical ($G$-$\spin^c$-)tangent bundle of  the sphere bundle of $\mathbf
  1\oplus W$.} 
  The \emph{modification} of a $K$-cycle $(M,E,f)$ associated to 
the bundle $W$ is the
  $K$-cycle 
$(Z,  F\otimes \pi^*E , f\circ \pi)$.
\end{definition}

\begin{definition}\label{def:equiv_relation}
  We define an equivalence relation on the set of isomorphism classes of
  cycles of Definition \ref{def:cycles} as follows. It is generated by the
  following three elementary steps:
  \begin{enumerate}
  \item \textbf{direct sum is disjoint union}. Given $(M,E_1,f)$ and $(M,E_2,f)$,
    \begin{equation*}
(M,E_1,f)+ (M,E_2,f) \sim (M,E_1\oplus E_2,f).
\end{equation*}
\item \textbf{bordism}. If there is a bordism of K-cycles $(L,F,\Phi)$ as in
  Definition \ref{bordism-def} with boundary the two parts $(M_1,E_1,f_1)$ and
  $-(M_2,E_2,f_2)$, we set $(M_1,E_1,f_1)\sim (M_2,E_2,f_2)$.
\item \textbf{modification}. If $(Z,F\otimes \pi^*E,f\circ\pi)$ is the
  modification of a K-cycle $(M,E,f)$ associated to the $G$-$\spin^c$
  bundle $W$, then $(Z,F\otimes \pi^*E,f\circ\pi)\sim (M,E,f)$. 
  \end{enumerate}
\end{definition}

\begin{definition}
  For a pair $(X,Y)$ as above, we define the \emph{equivariant geometric
  K-homology} $K^{G,geom}_*(X,Y)$ as the set of isomorphism classes of cycles
  as in Definition \ref{def:cycles}, modulo the equivalence relation of
  Definition \ref{def:equiv_relation}.

  {Disjoint union of $K$-cycles provides a structure of}
  $\integers/2\integers$-graded {abelian} group, graded by the parity of
  the dimension of the 
  underlying manifold of a cycle. 
\end{definition}

\begin{lemma}\label{lem:K-bundles}
  Given a compact $G$-$\spin^c$-manifold $M$ with boundary, a $G$-map $f\colon
  (M,\boundary M)\to (X,Y)$ and a class $x\in K^0_G(M)$, we get a well defined
  element $[M,x,f]\in K^{G,geom}_*(X,Y)$ by representing $x=[E]-[F]$ with two
  $G$-vector bundles $E,F$ over $M$ and setting
  \begin{equation*}
    [M,x,f]:= [M,E,f]-[M,F,f] \in K^{G,geom}_*(X,Y).
  \end{equation*}

  In the opposite direction, we can assign to each triple $(M,E,f)$ a
  triple $(M,[E],f)$ with $[E]\in K^0_G(M)$ the K-theory class represented
  by $E$.
\end{lemma}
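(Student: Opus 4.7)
The second half of the lemma is tautological --- any $G$-equivariant Hermitian vector bundle $E$ over $M$ defines a class $[E]\in K^0_G(M)$, which is all that is being asserted. The substantive content is the first half, namely that the formal difference $[M,E,f]-[M,F,f]\in K^{G,geom}_*(X,Y)$ depends only on the K-theory class $x=[E]-[F]$ and not on the chosen representatives $E,F$. My plan is to reduce this directly to the ``direct sum is disjoint union'' clause of Definition~\ref{def:equiv_relation}; neither the bordism nor the modification relation will enter.

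First, I would recall that for $G$ a compact Lie group and $M$ a compact $G$-space, $K^0_G(M)$ is by definition the Grothendieck group of isomorphism classes of $G$-equivariant Hermitian vector bundles on $M$, so that every class genuinely admits a presentation as $[E]-[F]$ with actual $G$-bundles. Given two such presentations $[E_1]-[F_1]=[E_2]-[F_2]$, the standard description of the Grothendieck construction produces a $G$-bundle $H$ on $M$ together with an equivariant isomorphism $E_1\oplus F_2\oplus H\iso E_2\oplus F_1\oplus H$. Iterated application of the direct-sum-is-disjoint-union relation, together with the fact that isomorphic cycles are identified, then converts this isomorphism of bundles into the identity
\[
 [M,E_1,f]+[M,F_2,f]+[M,H,f] \;=\; [M,E_2,f]+[M,F_1,f]+[M,H,f]
\]
in $K^{G,geom}_*(X,Y)$. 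Cancelling $[M,H,f]$ and rearranging yields the desired equality $[M,E_1,f]-[M,F_1,f]=[M,E_2,f]-[M,F_2,f]$.

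The only point that is not pure formal manipulation is that cancellation of $[M,H,f]$, which uses that $K^{G,geom}_*(X,Y)$ carries the structure of an abelian group rather than merely a commutative monoid. This rests on the existence of additive inverses $-[M,E,f]=[-M,E,f]$, which one establishes by the standard cylinder bordism on $M\times[0,1]$ with appropriately chosen $\spin^c$-structure joining the two orientations on the ends. I view that short verification as the main piece of independent work here, though it is already implicit in the assertion, in the definition of $K^{G,geom}_*(X,Y)$, that disjoint union makes the quotient into an abelian group, and is not really specific to the present lemma.
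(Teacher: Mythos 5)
Your proposal is correct and takes essentially the same approach as the paper: both reduce well-definedness to the ``direct sum is disjoint union'' relation combined with the group structure on $K^{G,geom}_*(X,Y)$. The paper compresses this into a single sentence (checking stability under $E\mapsto E\oplus H$, $F\mapsto F\oplus H$), whereas you unwind the Grothendieck relation explicitly and perform the cancellation; the two formulations are equivalent and neither invokes the bordism or modification relations.
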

\begin{proof}
  We have to check that this construction is well defined, i.e.~we have to
  check 
  that $[E\oplus H]-[F\oplus H]$ gives the same geometric K-homology class,
  but this follows from the relation ``direct sum-disjoint union''.
\end{proof}

\begin{remark}\label{rem:K-cycles}
  Lemma \ref{lem:K-bundles} allows to use a geometric picture of equivariant
  K-homology (for a compact Lie group $G$) where the bundle $E$ is replaced by
  a K-theory class $x$; and all other definitions are translated accordingly.
\end{remark}

\begin{definition}
  $K^{G,geom}_*$ is a $\integers/2$-graded functor from pairs of $G$-spaces to
  abelian groups. Given $g\colon (X,Y)\to (X',Y')$, we define the transformation
  $$g_*\colon K^{G,geom}_*(X,Y)\to K^{G,geom}_*(X',Y');\;
  g_*[M,E,f]:=[M,E,g\circ f].$$
 An inspection of our equivalence relation
  shows that this is well defined, and it is obviously functorial.

  Moreover, we define a boundary homomorphism
  \begin{equation*}
    \partial\colon K^{G,geom}_*(X,Y)\to K^{G,geom}_{*-1}(Y,\emptyset);
    [M,E,f]\mapsto [\boundary M,E|_{\boundary M}, f|_{\boundary M}].
  \end{equation*}
  Again, we observe directly from the definitions that this is compatible with
  the equivalence relation, natural with respect to maps of $G$-pairs and
  a group homomorphism.
\end{definition}

Our main Theorem \ref{theo:main} shows that we have (for the subcategory of
compact $G$-pairs which are retracts of $G$-$\spin^c$ manifolds) explicit
natural isomorphisms to $K^{G,an}_*$. In particular, we observe that on this
category $K^{G,geom}_*$ with the above structure is a $G$-equivariant homology
theory. 

\section{Equivariant analytic K-homology}

For $G$ a compact group and $(X,Y)$ a compact $G$-$CW$-pair, analytic equivariant
K-homology and analytic equivariant K-theory are defined in terms of bivariant
KK-theory:
\begin{equation*}
  K^{G,an}_*(X,Y):= KK^G_*(C_0(X\setminus Y),\complexs);\qquad K_{G}^*(X,Y):=
  KK^G_*(\complexs,C_0(\green{X\setminus Y})). 
\end{equation*}
Of course, it is well known that $K_G^0(X,Y)$ is
naturally isomorphic to the Grothen\-dieck group of $G$-vector bundle pairs over
$X$ with a isomorphism over $Y$.
Moreover, most constructions in equivariant K-homology and K-theory can be
described in terms of the Kasparov product in KK-theory.

\subsection{Analytic Poincar\'e duality}
The key idea we employ to 
 describe the relation between geometric and analytic K-homology  is
 Poincar\'e duality in the setting of equivariant KK-theory
 developed by Kasparov \cite{kasparov88}. 
An orientation  for equivariant K-theory is  given by a
$G$-$\spin^c$-structure.  This Poincar{\'e} duality was in fact
originally stated by Kasparov 
for general   oriented manifolds  by using the
Clifford  algebra $C_\tau(M)$. But  for a manifold $M$ with
  a   $G$-$\spin^c$-structure, the Clifford  algebra $C_\tau(M)$ used in
  \cite{kasparov88} is $G$-Morita equivalent to $C_0(M)$, the Morita
  equivalence being implemented by the sections of the spinor bundle.
 We refer to \cite{kasparov88} (see also \cite{Segal}) for the 
definition of the representable equivariant K-theory group
$RK_G^*(X)$ of a locally compact $G$-space $X$.
 We only recall  here that the cycles are given by 
the  cycles $(E,\phi,T)$ for Kasparov's bivariant $K$-theory group $\KK^G_*(C_0(X),C_0(X))$ 
such that the representation  $\phi$ {of $C_0(X)$} on $E$
is the one of the   {$C_0(X)$-Hilbert structure}. By forgetting this
extra requirement, we get an obvious homomorphism
$\iota_X \colon RK_G^*(X)\to\KK^G_*(C_0(X),C_0(X))$. Hence it makes sense to take the
Kasparov product with elements in $ K_{G}^{*}(C_0(X))=\KK^G_*(C_0(X),\C)$
and this gives rise to a product
$$RK_G^*(X)\times  K_{G}^{*}(C_0(X)) \to K_{G}^{*}(C_0(X));\,(x,y)\mapsto \iota_X(x)\otimes y.$$

 Recall that  for any
  $G$-$\spin^c$-manifold $M$, there is a
  fundamental class $[M]\in K_{G}^{\dim(M)}(C_0(M))$ associated to
  the Dirac element of the $G$-$\spin^c$-structure on
  $M$. Moreover, if $N$ is an open $G$-invariant subset
  of $M$, then $[N]$ is the restriction of $[M]$ to $N$, i.e the image
  of $[M]$ under the morphism $K_{G}^{\dim(M)}(C_0(M))\to
  K_{G}^{\dim(M)}(C_0(N))$ induced by the inclusion
  $C_0(N)\hookrightarrow C_0(M)$. This is the obvious equivariant
  generalization of \cite[Theorem 3.5]{BHS}, compare also the discussion of
  \cite[Chapters 10,11]{MR1817560}
  
\begin{theorem}\label{theo:main}
  Given any $G$-$\spin^c$-manifold $M$, the Kasparov product
  with the class $[M]$  gives an isomorphism
  \begin{equation*}
    \PD_M\colon  RK_G^*(M)\xrightarrow{\iso} K^{G}_{\dim
      M-*}(C_0(M));\, x\mapsto \iota_M(x)\otimes [M].
  \end{equation*}
\end{theorem}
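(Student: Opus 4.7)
The plan is to follow Kasparov's classical strategy and reduce the statement, via a Mayer--Vietoris/five-lemma argument together with the slice theorem, to the case of a linear $G$-representation, where it becomes equivariant Bott periodicity.

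First I would exhibit the candidate inverse. The diagonal embedding $\Delta \colon M \hookrightarrow M \times M$ has normal bundle $G$-equivariantly isomorphic to $TM$, which carries a $G$-$\spin^c$-structure. A $G$-invariant tubular neighborhood together with the Thom isomorphism for $G$-$\spin^c$-bundles produces a ``dual fundamental class'' $\Delta_M \in RK_G^0(M)$ (or, more precisely, a class in $KK^G(\complexs, C_0(M) \otimes C_0(M))$ represented by a Bott-type element supported near the diagonal). The Kasparov product with $\Delta_M$ on the appropriate side gives a map $K^G_{\dim M - *}(C_0(M)) \to RK_G^*(M)$. One then checks (as Kasparov does in \cite{kasparov88}) that the local/fiberwise ``Dirac dual-Dirac'' computation shows that the two compositions are identities modulo commuting Kasparov products; this is the algebraic heart of Poincar\'e duality and is essentially a linear computation with Clifford algebras, implemented here via the spinor bundle Morita equivalence between $C_\tau(M)$ and $C_0(M)$ that is alluded to before the theorem statement.

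To make this rigorous globally, I would argue by induction on a $G$-invariant cover. Both functors $U \mapsto RK_G^*(U)$ and $U \mapsto K^G_{\dim M - *}(C_0(U))$ satisfy Mayer--Vietoris for $G$-invariant open subsets, and the Kasparov product with the restriction $[U] = [M]|_U$ commutes with the connecting maps (this is exactly the compatibility noted before the theorem, that $[U]$ is the image of $[M]$ under the inclusion $C_0(U) \hookrightarrow C_0(M)$). By the five-lemma, if $\PD_U$, $\PD_V$, and $\PD_{U \cap V}$ are isomorphisms, then so is $\PD_{U \cup V}$. By the slice theorem for compact Lie group actions, $M$ admits a finite $G$-invariant open cover by tubes of the form $G \times_H W$, where $H \leq G$ is a closed subgroup and $W$ is a finite-dimensional orthogonal $H$-representation, so it suffices to establish the theorem for such tubes.

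For $M = G \times_H W$, the induction isomorphism $KK^G(C_0(G \times_H X), -) \cong KK^H(C_0(X), -)$, together with its compatibility with the respective fundamental classes, reduces the statement to $H$-equivariant Poincar\'e duality for the linear $\spin^c$-representation $W$. This final case is precisely Kasparov's equivariant Bott periodicity theorem for compact group actions on a Euclidean space \cite{kasparov88}, which asserts that the Bott element and the Dirac element for $W$ are mutually inverse under Kasparov product.

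The main obstacle, and the point where I would have to be most careful, is the bookkeeping of naturality: verifying that the fundamental class $[M]$ restricts correctly under open inclusions (already stated in the excerpt), is compatible with the induction isomorphism from $H$-spaces to $G$-spaces, and that Mayer--Vietoris boundary maps intertwine with Kasparov product on the appropriate side. None of these are hard individually, but combining them cleanly into a single inductive argument is where ``the diligent reader is asked to supply full arguments.''
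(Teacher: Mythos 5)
The paper does not actually prove Theorem~\ref{theo:main}; it is stated as a citation. Specifically, the text immediately preceding the theorem explains that this is Kasparov's Poincar\'e duality from \cite{kasparov88}, originally formulated with the Clifford algebra $C_\tau(M)$, combined with the observation that for a $G$-$\spin^c$-manifold the spinor bundle implements a $G$-Morita equivalence $C_\tau(M) \sim C_0(M)$, which converts Kasparov's statement into the form given here. So there is no proof to compare against; the ``paper's proof'' is ``cite \cite{kasparov88} plus Morita equivalence.''

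Your sketch is a reasonable reconstruction of Kasparov's underlying argument (dual-Dirac element, Mayer--Vietoris/five-lemma on a slice cover, induction from $H$ to $G$, reduction to equivariant Bott periodicity on a linear representation), which is indeed the content behind the citation, so the two routes agree in substance. Two points deserve care if you want the argument to be rigorous. First, you assert that $M$ admits a \emph{finite} $G$-invariant cover by tubes $G \times_H W$; that holds only when $M$ is compact, whereas the theorem is stated for arbitrary $G$-$\spin^c$-manifolds (note the $C_0(M)$ and the use of \emph{representable} K-theory $RK_G^*$, precisely because $M$ need not be compact). For non-compact $M$ one must argue by an exhaustion/colimit argument or work in Kasparov's $\mathcal{R}KK(M;\cdot,\cdot)$-framework where the inductive structure is handled differently; this is where \cite{kasparov88} spends real effort. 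Second, your candidate inverse should be identified more carefully: what the diagonal embedding and the Thom class of $TM$ produce is the dual-Dirac element in $KK^G(\complexs, C_0(M) \otimes C_0(M))$ (or its $C_\tau(M)$-avatar), not a class in $RK_G^0(M)$; the inverse to $\PD_M$ is then the \emph{slant product} with that element, and checking the two composites uses the associativity and commutativity machinery of the Kasparov product rather than a single local computation. Neither issue invalidates your outline, but both are exactly the kind of bookkeeping you correctly flag as the hard part.
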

\begin{remark}\
\begin{enumerate}
\item For a compact space, the equivariant K-theory and the
  equivariant representable K-theory coincide. In particular, for a
  compact $G$-$\spin^c$-manifold $M$, the Poincar{\'e} duality can be
  stated as  an isomorphism
\begin{equation*}
    \PD_M\colon K_G^*(M)\xrightarrow{\iso} K^{G,an}_{\dim M-*}(M),
  \end{equation*}
and moreover, for any complex $G$-vector $E$ on $M$, 
$\PD_M([E])$ is the class in $K^{G,an}_{\dim M-*}(M)= KK_{G}^{\dim M-*}(C_0(M),\C)$ associated
to the Dirac operator $D_M^E$ on $M$ with coefficient in the complex
vector bundle $E$.
\item Recall that  representable equivariant K-theory  is a  functor
  which is 
   invariant with respect to
  $G$-homotopies.  In particular, if $M$ is a compact
  $G$-$\spin^c$-manifold with boundary $\partial M$, then $M$ is
  $G$-homotopy equivalent to its interior $M\setminus\partial M$ and thus we get
  a natural identification $K_G^*(M)\cong RK_G^*(M\setminus\partial
  M)$ given by restriction to $M\setminus\partial M$ of the $C(M)$-structure.
In view of this, the  Poincar{\'e} duality for the pair $(M,\partial
M)$ can be stated in the following way 
\begin{equation*}
    \PD_M\colon K_G^{*} (M) \xrightarrow{\iso} K^{G,an}_{\dim(M)-*}(M,\partial M),
  \end{equation*}
  For a compact $G$-space $X$ and a closed $G$-invariant subset $Y$ of $X$,
  let us denote by $\iota_{X,Y}$, the composition $$K_G^*(X)\cong RK_G^*(X)\to
  RK_G^*(X\setminus Y)\stackrel{\iota_{X,Y}}{\to}KK^G_*(C_0(X\setminus
  Y),C_0(X\setminus Y)),$$ where the
  first map is induced by the inclusion $X\setminus Y\hookrightarrow X$. Then, with
  these  notations and under the identification $K_G^*(M)\cong RK_G^*(M\setminus\partial
  M)$, we get  for any $x$ in  $K_G^*(M)$ that
  $\PD_M(x)=\iota_{M,\partial M}(x)\otimes [M\setminus\partial M]$.
\end{enumerate}
\end{remark}

\begin{definition}\label{def:of_trafos}
  
  We are now in the situation to define the natural isomorphisms
\begin{equation*}
  \begin{split}
    \alpha &\colon K_*^{G,geom}(X,Y)\to K_*^{G,an}(X, Y)\\
    \beta &\colon K_*^{G,an}(X,Y)\to K_*^{G,geom}(X,Y).
\end{split}
\end{equation*}

To define $\alpha$, let $(M,E,f)$ be a cycle for geometric K-homology, with
$E$ a complex $G$-vector bundle on $M$. Then we set
\begin{equation*}
  \alpha([M,E,f]):= f_*(\PD_M([E])).
\end{equation*}
To define $\beta$, given $x\in K_k^{G,an}(X, Y)$, choose a
retraction $ (X,Y)\xrightarrow{j} (M,\boundary M)\xrightarrow{p} (X,Y)$ with
$M$ a compact $G$-$\spin^c$ manifold with $\dim(M)\equiv k\pmod 2$ (such a
manifold exists by assumption, if the parity is not correct just take the
product with $S^1$ with trivial $G$-action). Then set
\begin{equation*}
  \beta(x):= [ M, \PD_M^{-1}(j_*(x)), p].
\end{equation*}

\end{definition}

\begin{lemma}\label{lem:alpha_is_homom}
  The transformation $\alpha$ is compatible with the relation ``direct
  sum---disjoint union'' of the definition of $K^{G,geom}_*(X,Y)$. Under the
  assumption that $\alpha$ is well defined, it is a 
  homomorphism.
\end{lemma}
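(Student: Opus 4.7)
The plan is to verify two separate assertions: (a) that $\alpha$ descends through the ``direct sum---disjoint union'' relation, and (b) that, once $\alpha$ is known to be well-defined on the quotient, additivity of $\alpha$ under disjoint union of cycles makes it a group homomorphism with respect to the group structure on $K^{G,geom}_*(X,Y)$.

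For (a), suppose $(M,E_1,f)$ and $(M,E_2,f)$ share the same underlying manifold and map. I would simply unpack the definition: the class $[E_1\oplus E_2]\in K^0_G(M)$ equals $[E_1]+[E_2]$, and the Poincar\'e duality map $\PD_M$ is implemented by the Kasparov product with the fixed fundamental class $[M]$, hence is a homomorphism of abelian groups. Therefore $\PD_M([E_1\oplus E_2])=\PD_M([E_1])+\PD_M([E_2])$ in $K^{G,an}_*(M)$, and applying the homomorphism $f_*$ yields
\begin{equation*}
\alpha([M,E_1\oplus E_2,f])=\alpha([M,E_1,f])+\alpha([M,E_2,f]),
\end{equation*}
which is exactly what is needed.

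For (b), I would use the splitting $C_0(M_1\sqcup M_2)=C_0(M_1)\oplus C_0(M_2)$, which induces natural direct sum decompositions
\begin{equation*}
K_G^0(M_1\sqcup M_2)\cong K_G^0(M_1)\oplus K_G^0(M_2),\qquad K^{G,an}_*(M_1\sqcup M_2)\cong K^{G,an}_*(M_1)\oplus K^{G,an}_*(M_2).
\end{equation*}
Under the first, a bundle $E=E_1\sqcup E_2$ corresponds to $([E_1],[E_2])$; under the second, the fundamental class $[M_1\sqcup M_2]$ corresponds to $([M_1],[M_2])$, since the Dirac operator on a disjoint union is the direct sum of the Dirac operators on the components. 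Because the Kasparov product is compatible with direct sums of modules in each variable, $\PD_{M_1\sqcup M_2}([E])$ corresponds to $(\PD_{M_1}([E_1]),\PD_{M_2}([E_2]))$. Pushing forward along $f_1\sqcup f_2$ and adding the two summands inside $K^{G,an}_*(X,Y)$ yields $\alpha([M_1,E_1,f_1])+\alpha([M_2,E_2,f_2])$, establishing additivity under disjoint union and hence the homomorphism property.

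There is no serious obstacle here. The only point requiring care is the identification of the fundamental class of a disjoint union with the pair of fundamental classes of the components, together with the compatibility of $\PD_M$ with direct sums; both are immediate from the bilinearity and functoriality of the Kasparov product together with the construction of $[M]$ from the Dirac element.
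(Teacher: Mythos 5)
Your proof is correct and follows essentially the same route as the paper's: compute $\alpha$ on a disjoint union via the decomposition $C_0(M_1\sqcup M_2)\cong C_0(M_1)\oplus C_0(M_2)$ (and the corresponding splitting of the fundamental class) to obtain additivity, and combine this with the fact that $\PD_M$ and $f_*$ are homomorphisms to handle the direct-sum--disjoint-union relation. The paper does this in a single displayed computation plus the closing remark that $\PD$ and $f_*$ are homomorphisms; your parts (a) and (b) simply unpack that remark into its two constituent observations.
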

\begin{proof}
  \begin{multline*}
  \alpha([M,E,f]+[N,F,g])= \alpha([M\disjointunion N, E\disjointunion F,
  f\disjointunion g])\\
  = (f\disjointunion g)_*(\PD_M([E])\oplus
  \PD_N([F]))=f_*(\PD_M([E]))+g_*(\PD_N([F])). 
\end{multline*}
This implies both assertions, as $\PD$ and $f_*$ are both homomorphisms.
\end{proof}

To prove that both maps are well defined and indeed inverse to each other  we
need a few more properties of Poincar\'e duality which we collect in the
sequel. These statements are certainly well known, for the convenience of the
reader we give proofs of most of them in an appendix.

We first relate Poincar\'e duality to the Gysin homomorphism, and also
describe vector bundle modification in terms of the Gysin homomorphism. 

Let $f\colon M\to N$ be a smooth $G$-map between two
compact $G$-$\spin^c$-manifolds without boundary. We use, as a special
case of 
\cite[Section 4.3]{KasparovSkandalis} (see also {\cite[Section 7.2]{MR1671260}}), the Gysin
element $f!$ in
$KK^G_{\dim M-\dim N}(C(M),C(N))$. It has the functoriality property that if
$f\colon M\to N$ and $g\colon N\to 
N'$ are two smooth $G$-maps  between 
compact $G$-$\spin^c$-manifolds, then $f!\otimes g!=(g\circ f)!$.
We will also need the corresponding construction for manifolds with
boundary. We recall all this in the appendix.

\begin{lemma}
  \label{lem:K-mod}
  An equivalent description of vector bundle modification, using Remark \ref{rem:K-cycles}, is given as follows:
  
  Let $(M,x,\phi)$ be a cycle for $K^{G,geom}_*(X,Y)$, with $x \in K^0_G(M)$, and let $W$ be a $G$-$\spinC$-vector bundle over $M$ of even rank.
  Let $\pi\colon Z \rightarrow M$ be the underlying $G$-manifold of the modification with respect to $W$.
  Recall from definition \ref{def:modification} that $s\colon M \rightarrow Z$ is the north pole section and that the bundle $F$ is obtained via clutching.
  Then the vector bundle modification $(Z, \pi^*(x) \otimes [F], \phi \circ \pi)$ of $(M,x,\phi)$ along $W$ is bordant
  to the cycle $(Z, s!(x), \phi \circ \pi)$ (where we interprete $s!(x)$ as
  an element of $K^0_G(Z)$ using \green{formal Clifford} periodicity).
\end{lemma}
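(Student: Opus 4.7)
The strategy is to exploit Remark~\ref{rem:K-cycles} and compare the two cycles at the level of $K$-theory classes on $Z$, then handle the remaining ``correction term'' by a geometric null-bordism.

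First I would identify the normal bundle of the north pole section $s\colon M\to Z$ with $W$ as a $G$-$\spin^c$-bundle: at $(1,0)$ the tangent space to the unit sphere of $\reals\oplus W$ inside $\reals\oplus W$ is precisely $\{0\}\oplus W_m$, and this identification extends fiberwise. Since $\pi\circ s = \id_M$, the projection formula for the Gysin map reduces the task to establishing the single identity
\begin{equation*}
s!(1) = [F] - [\pi^* S^*_{W,-}] \qquad\text{in }K^0_G(Z),
\end{equation*}
because multiplying by $\pi^*(x)$ then gives $s!(x) = \pi^*(x)\cdot [F] - \pi^*(x\cdot [S^*_{W,-}])$. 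Here $K^{-\dim W}_G(Z)$ is identified with $K^0_G(Z)$ via the ``formal Clifford periodicity'' mentioned in the statement, i.e.\ Bott periodicity against the even-rank $G$-$\spin^c$-bundle $W$.

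Next I would verify this identity using the tubular neighborhood definition of the Gysin map recalled in the appendix. The northern hemisphere $\{t\ge 0\}\subset Z$ is canonically the closed disk bundle $D(W)$, with boundary the equatorial sphere bundle $S(W)$. The Thom class $\tau_W\in K^0_G(D(W),S(W))$ is represented by the Koszul complex $(\pi^*S^*_{W,+},\pi^*S^*_{W,-};c)$ with $c$ Clifford multiplication; passing to an absolute class on $Z$ via excision and the long exact sequence of the pair $(Z,Z_-)$ amounts to clutching: on $\{t\ge 0\}$ put $\pi^*S^*_{W,+}$, on $\{t\le 0\}$ put $\pi^*S^*_{W,-}$, and glue along $\{t=0\}$ via $c$. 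This is precisely the definition of $F$, up to the standard subtraction of the reference bundle $\pi^* S^*_{W,-}$ (extended to all of $Z$) recording the passage from a relative to an absolute class.

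It then remains to kill the cycle $(Z,\pi^*(x\cdot[S^*_{W,-}]),\phi\circ\pi)$. For this I use that $Z$ is the boundary of the $G$-$\spin^c$ disk bundle $\tilde\pi\colon D(\mathbf 1\oplus W)\to M$, over which both the map $\phi\circ\pi$ and any bundle pulled back from $M$ extend. The triple $(D(\mathbf 1\oplus W),\tilde\pi^*(x\cdot[S^*_{W,-}]),\phi\circ\tilde\pi)$, together with the auxiliary function $f\equiv 2$ of Definition~\ref{bordism-def}, provides a null-bordism. Combined with Lemma~\ref{lem:K-bundles} and the direct-sum--disjoint-union relation, this yields $[Z,\pi^*(x)\otimes[F],\phi\circ\pi] = [Z,s!(x),\phi\circ\pi]$ in $K^{G,geom}_*(X,Y)$. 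The main obstacle will be the bookkeeping in the middle step: matching the conventions used for the Gysin map $s!$ (via tubular neighborhood and the $G$-$\spin^c$ orientation of $W$), the Bott/Clifford periodicity identification $K^{-\dim W}_G(Z)\cong K^0_G(Z)$, and the clutching description of $F$ as $S^*_{v,+}$ from Appendix~\ref{sec:bott-thom}. It is easy to slip a sign or exchange $S^*_{W,+}$ with $S^*_{W,-}$, so I would cross-check on the baby case $M=\mathrm{pt}$, $W=\C$, $Z=S^2$, where $s!(1)$ is the Bott generator $\beta$ and $[F] = 1\pm\beta$ depending on orientation.
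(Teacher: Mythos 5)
Your proposal follows essentially the same route as the paper: both identify $s!(x)$ with $\pi^*(x)\otimes([F]-[F^\infty])$ (where $F^\infty$ is the reference bundle $\pi^*S^*_{W,-}$ pulled back from $M$) via the topological description of the Thom element (Proposition~\ref{prop:thom-topo}), and both dispose of the correction term $(Z,\pi^*(x)\otimes[F^\infty],\phi\circ\pi)$ by a null-bordism over the disk bundle of $\mathbf 1\oplus W$. The only substantive difference is in the execution of the first step --- the paper carries out an explicit Kasparov-product calculation, whereas you factor through the projection formula for $s!$ and the relative-to-absolute translation of the Thom class by clutching --- which is a cleaner packaging of the same comparison; the $S^2$ sanity check is a good safeguard against the sign/hemisphere convention issues you rightly worry about. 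One small caveat: taking $f\equiv 2$ in the null-bordism only works when $\partial M=\emptyset$; if $M$ has a boundary then $D(\mathbf 1\oplus W)$ is a manifold with corners and $\partial D(\mathbf 1\oplus W)\ne Z$, so the auxiliary function of Definition~\ref{bordism-def} must be chosen nonconstant (using a collar near the corner) to isolate $Z$ from the side face $D(\mathbf 1\oplus W)|_{\partial M}$ --- the paper elides the same point, so this is a shared technicality rather than a gap particular to your argument.
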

\begin{proof}
  The $G$-vector bundle $F^\infty$ from Proposition \ref{prop:thom-topo}, the topological description of the Thom isomorphism, is pulled back from $M$,
  hence $\pi^*(x) \otimes [F^\infty]$ extends to the disk bundle of $W \oplus {\bf 1}$ and we conclude that the first cycle is bordant to
  \begin{equation*}
    (Z, \pi^*(x) \otimes ([F] - [F^\infty]), \phi \circ \pi).
  \end{equation*}
  We will now show that its $K$-theory class $\pi^*(x)\otimes([F]-[F^\infty])$
  agrees with 
$$s!(x) = x \otimes_{C(M)} b_W \otimes_{C_0(W)} \kk{\theta_M},$$
 where $\theta_M$ is the inclusion
  $C_0(W) \subseteq C(Z)$ of $C^*$-algebras \green{and $b_W$ is the ``Bott element'' (compare Remark \ref{rem:other_defs})}.  
  Indeed, using the $KK$-picture of the tensor product of vector bundles and
  commutativity of the exterior Kasparov product we find that, as element of
  $KK^G(\complexs,C(Z))$, the tensor product of vector bundles
 \green{ $\pi^*(x)\tensor([F]-[F^\infty])$ is given by
  \begin{equation*}
x \otimes_{C(M)}(\kk{1_{C(M)}}\otimes ([F] - [F^\infty])) \otimes_{C(M \times Z)} \kk{\mu'}
  \end{equation*}
  where $\mu\colon C(Z \times Z) \rightarrow C(Z)$ and $\mu':=\mu\circ
  (\pi\times \id_Z)\colon C(M \times Z) \rightarrow C(Z)$} are pointwise multiplication.
  The claim now follows from comparing the right-hand Kasparov product (which can be computed explicitely) with $b_W \otimes_{C_0(W)} \kk{\theta_M}$
  (which is straightforward using the topological description in Proposition
  \ref{prop:thom-topo} \green{and the fact that off the tubular neighborhood
    $W$, the two bundles are isomorphic}).
\end{proof}

{From now on, we will use the following notation: if $f\colon M\to N$
is  a $G$-map between $G$-$\spin^c$-manifolds and $E$ is a complex vector bundle
over $M$, then $f!E$ will stand for the element $f![E]$ of $K^{*+n-M}_G(N)$.}
It is well known that the Gysin
map  and functoriality in K-homology are intertwined by Poincar{\'e}
duality. This is the key for proving that $\alpha$ is compatible with vector
bundle modification, using the description of the latter given in Lemma
\ref{lem:K-mod}. We will prove the next assertion in \ref{sec:Gysin_and_PD}.
\begin{lemma}\label{lem-gysin}
Let $f\colon M\to N$ be a $G$-map between $G$-$\spin^c$ manifolds with
$m=\dim M$ and $n=\dim N$, possibly with boundary. Assume that $f(\boundary
M)\subset\boundary N$. Then
we have the following commutative diagram
\begin{equation*}
    \begin{CD}
      K_G^*(M) @>{\PD_M}>> K^{G,an}_{m-*}(M,\boundary M)\\
       @Vf!VV @VVf_*V\\
      K_G^{*+n-m}(N) @>{\PD_N}>> K^{G,an}_{m-*}(N,\boundary N).
   \end{CD}
  \end{equation*}
\end{lemma}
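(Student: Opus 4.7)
My approach is to reduce to two basic cases by factoring $f$ and then combine via the functoriality $(g\circ f)!=f!\otimes g!$ stated in the excerpt together with the evident functoriality of the analytic pushforward. Explicitly, factor $f\colon M\to N$ as the closed $G$-embedding $\gamma\colon M\to M\times N$, $x\mapsto(x,f(x))$, followed by the projection $\pi\colon M\times N\to N$. After an initial reduction to $G$-manifolds without boundary via the $G$-homotopy equivalence $M\simeq M\setminus\boundary M$ and the identification $K_G^*(M)\iso RK_G^*(M\setminus\boundary M)$ recalled just after Theorem~\ref{theo:main}, it suffices to prove the naturality square separately for closed $G$-embeddings and for trivial projections.

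For a closed $G$-embedding $\iota\colon M\hookrightarrow N$ with $G$-$\spin^c$ normal bundle $\nu$ of rank $n-m$, the Kasparov--Skandalis class $\iota!$ is built from the equivariant Thom isomorphism for $\nu$ together with excision along a $G$-invariant tubular neighborhood $U\iso \nu$. I would verify commutativity in two steps. First, using the restriction property $[U]=[N]|_{U}$ stated in the excerpt, $\PD$ is natural under the open inclusion $U\hookrightarrow N$, so one may replace $N$ by $U$. Second, on the total space $\nu$ one shows that $\PD_\nu$ factors as the Thom isomorphism $K_G^*(M)\to K_G^{*+n-m}(\nu)$ (realised as Kasparov product with the fiberwise Dirac class) followed by $\PD_M$; this is exactly the clutching identification of the Thom class with the dual spinor bundle of $\nu$ that is already used in Lemma~\ref{lem:K-mod}. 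Taken together, these give the embedding case.

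For the projection $\pi\colon M\times N\to N$, the Gysin class is the exterior product $\pi! = [M]\otimes 1_{C(N)}\in \KK^G_m(C(M\times N),C(N))$ of the fundamental class of $M$ with the identity. The analytic pushforward $\pi_*$ is induced by precomposition with $\pi^*\colon C(N)\to C(M\times N)$. The square then reduces to the external-product identity $[M\times N]=[M]\otimes[N]$ for fundamental classes of $G$-$\spin^c$ manifolds, combined with associativity and commutativity of the Kasparov product: both compositions unwind to $\iota_{M\times N}(x)\otimes_{C(M\times N)}([M]\otimes[N])$.

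The step I expect to be the main obstacle is a careful treatment of the boundary case. Even after passing to interiors, the closed-embedding argument requires that the tubular neighborhood of $\iota(M)$ in $N$ be compatible with the two boundaries, which fails for a generic $G$-embedding with $f(\boundary M)\subset \boundary N$. To avoid this I would arrange, after a $G$-isotopy supported in a collar of $\boundary M$, that $\iota$ is a product near $\boundary M$, i.e.\ $\iota(x,t)=(\iota'(x),t)$ on $\boundary M\times[0,\varepsilon)\subset M$ for some $G$-embedding $\iota'\colon\boundary M\hookrightarrow\boundary N$ and a chosen collar of $\boundary N$; such an isotopy exists by standard equivariant tubular-neighborhood and collar theorems for compact Lie group actions. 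The normal bundle $\nu$ then extends compatibly across $\boundary M$, and the open/closed argument above carries through after replacing absolute Poincar\'e duality by its relative version $\PD_M\colon K_G^*(M)\iso K^{G,an}_{m-*}(M,\boundary M)$ stated in the second remark after Theorem~\ref{theo:main}.
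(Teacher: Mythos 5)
Your closed-case strategy — factor $f$ as a $G$-embedding $\gamma\colon M\to M\times N$, $x\mapsto(x,f(x))$, followed by $\pi_2$, use functoriality of $!$, and handle the two factors separately — is exactly the decomposition the paper uses in Section~\ref{sec:Gysin_and_PD}. Your treatment of the two factors is phrased differently (via the Thom factorization of $\PD_\nu$ and the product formula $[M\times N]=[M]\otimes[N]$) whereas the paper manipulates the Kasparov cycles directly, but these are equivalent in content and either can be made precise. One caveat on presentation: in the embedding step you say ``$\PD_\nu$ factors as the Thom isomorphism $K_G^*(M)\to K_G^{*+n-m}(\nu)$ followed by $\PD_M$,'' but the Thom map lands in compactly supported $K$-theory of the noncompact space $\nu$, not the representable theory $RK^*_G(\nu)\cong K^*_G(M)$ that is the domain of $\PD_\nu$; the statement you need is a commutative square relating $\PD_\nu\circ\mathrm{Thom}$ to $z_*\circ\PD_M$ for the zero section $z$, and one has to keep track of which of the two $K$-theories is meant on $\nu$.

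Where you genuinely diverge from the paper is the boundary case, and this is worth understanding. The paper does \emph{not} prove the lemma for an independently constructed geometric Gysin map when $\boundary M\ne\emptyset$. Instead, in Definition~\ref{def:Gysin_boundary} the Gysin map for manifolds with boundary is \emph{defined} as $f!:=\PD_N^{-1}f_*\PD_M$, so that, as stated in Remark~\ref{rem:other_defs}, ``Lemma~\ref{lem-gysin} holds in the general case by definition.'' The substantive work in the boundary case is then deferred to showing consistency of this definition with a geometric one for the specific map actually used (the north pole section $s$ in vector bundle modification), and the paper's suggested tool for that is Lemma~\ref{lem:workhorse} --- the doubling trick, identifying $\PD_M$ as a direct summand of $\PD_{\db M}$. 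Your proposal takes a different, more ambitious route: you try to prove the lemma for a geometric Gysin map of an arbitrary boundary-preserving embedding by $G$-isotoping the embedding to be a product near $\boundary M$ so that tubular neighborhoods extend compatibly. This is a plausible alternative to the doubling argument, and it would give something stronger than the paper actually proves, but it is more work than the lemma requires given the paper's definitions. Relatedly, your opening ``initial reduction to $G$-manifolds without boundary via $M\simeq M\setminus\boundary M$'' is not by itself a reduction to the closed case: $M\setminus\boundary M$ is noncompact, and both the Kasparov--Skandalis Gysin element and the embedding $\gamma$ are constructed on the compact manifolds, not on the interiors. You do recognize this later when you flag the collar compatibility issue, so the last paragraph effectively supersedes the opening sentence, but as written the proposal gives a misleading impression that the boundary case follows formally from the closed one.
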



\begin{lemma}\label{lem:alpha_and_bundle_modification}
  The transformation $\alpha$ of Definition \ref{def:of_trafos} is compatible
  with vector bundle modification.
\end{lemma}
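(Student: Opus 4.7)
The plan is to chain Lemma~\ref{lem:K-mod} with Lemma~\ref{lem-gysin}; these two results have been prepared precisely so that this compatibility reduces to a short calculation. Let $(Z,\pi^*E\otimes F,f\circ\pi)$ be the modification of $(M,E,f)$ along a $G$-$\spin^c$-bundle $W$ of even rank, and let $s\colon M\to Z$ be the north pole section, so that $\pi\circ s=\id_M$. Unwinding the definition, what has to be shown is
\begin{equation*}
  (f\circ\pi)_*\bigl(\PD_Z(\pi^*[E]\otimes[F])\bigr)\;=\;f_*\bigl(\PD_M([E])\bigr).
\end{equation*}

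First, I would invoke Lemma~\ref{lem:K-mod}, which asserts that the modification cycle is bordant to $(Z,s!(E),f\circ\pi)$, where $s!(E)\in K^0_G(Z)$ (using formal Clifford periodicity, which is legitimate because $\rank W$ is even). Assuming that $\alpha$ has already been shown to respect bordism (see the remark on obstacles below), this gives
\begin{equation*}
  \alpha([Z,\pi^*E\otimes F,f\circ\pi])\;=\;\alpha([Z,s!(E),f\circ\pi])\;=\;(f\circ\pi)_*\bigl(\PD_Z(s!(E))\bigr).
\end{equation*}
Next, I would apply Lemma~\ref{lem-gysin} to the $G$-$\spin^c$-embedding $s\colon M\to Z$ (both closed, so the boundary hypothesis is vacuous), obtaining $\PD_Z(s!(E))=s_*(\PD_M([E]))$. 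Substituting and using functoriality of push-forward together with $\pi\circ s=\id_M$,
\begin{equation*}
  (f\circ\pi)_*\bigl(s_*(\PD_M[E])\bigr)\;=\;(f\circ\pi\circ s)_*(\PD_M[E])\;=\;f_*(\PD_M[E])\;=\;\alpha([M,E,f]),
\end{equation*}
which is exactly what we want.

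The main obstacle is the prerequisite that $\alpha$ respects the bordism relation; this is not in the excerpt and should be proved in a separate lemma preceding this one. The standard route is to apply Poincar\'e duality to the bordism pair $(L,\partial L)$: if $(L,F,\Phi)$ realises the bordism, then $\PD_L([F])\in K^{G,an}_{*}(L,\partial L)$ maps under the boundary homomorphism to the difference of the $\PD_{M_i}([E_i])$, using the restriction-compatibility of the fundamental class recalled just before Theorem~\ref{theo:main} and naturality of the Kasparov product; pushing forward by $\Phi$ and using that $\Phi|_{f^{-1}[-1,1]}$ factors through $Y$ then kills the difference in $K^{G,an}_*(X,Y)$. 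A secondary minor point is verifying that the Clifford-periodicity identification used when writing $s!(E)\in K^0_G(Z)$ is compatible with $\PD_Z$; this is built into the conventions of Lemma~\ref{lem-gysin} and follows from the usual degree bookkeeping, since $\dim Z-\dim M=\rank W$ is even. Modulo these items the lemma is a two-line composition.
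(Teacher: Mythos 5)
Your proof is essentially identical to the paper's: both reduce via Lemma~\ref{lem:K-mod} to the cycle $(Z,s!(E),f\circ\pi)$, then apply Lemma~\ref{lem-gysin} to the embedding $s$ to get $\PD_Z(s!(E))=s_*\PD_M([E])$, and finish with $\pi\circ s=\id_M$. Your remark that compatibility with bordism is needed to invoke Lemma~\ref{lem:K-mod} at the level of $\alpha$ is correct and is indeed handled in the paper by a separate lemma; the paper likewise treats bordism invariance independently and combines all three compatibility lemmas only when proving that $\alpha$ is well defined.
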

\begin{proof}
  The assertion is a direct consequence of Lemma \ref{lem:K-mod} and Lemma
  \ref{lem-gysin}. Explicitly, if $(M,E,f)$ is a cycle for
  $KK^{G,geom}_*(X,Y)$ and $(Z,s!(E),f\circ \pi)$ the result of vector bundle
  modification according to Lemma \ref{lem:K-mod},  then
  \begin{equation*}
    \begin{split}
      \alpha(Z,s!(E),f\circ \pi) &= f_*\pi_* \PD_Z(s!(E))\\
       &\stackrel{\text{Lemma \ref{lem-gysin}}}{=} f_*\pi_* s_*\PD_M(E)
         \stackrel{\pi\circ s=\id}{=} f_*\PD_M(E) \\
         &= \alpha(M,E,f).
    \end{split}
  \end{equation*}
\end{proof}

We now recall that, in the usual long exact sequences in K-homology, the
boundary of the fundamental class is the fundamental class, or, formulated
more casually: the {boundary of the Dirac element  is the Dirac
  element  of the boundary}. To deal with bordisms of
manifolds with boundary, we actually need a slightly
more general version as follows, which we prove in Appendix \ref{sec:appendixprooflemma}.
\begin{lemma}\label{lem-boundary}
  Let $L$ be a \spg manifold with boundary $\partial L$, let $M$ be a
  $G$-invariant submanifold of $\partial L$ with boundary $\partial M$ such that
  $\dim M=\dim L-1$ and let   $\partial\in
KK^G_1(C_0( M\setminus\partial M),C_0(L\setminus\partial L))$ be the boundary element
associated to the exact sequence
$$0\to C_0(L\setminus\partial L)\to C_0( (L\setminus \partial L)
 \cup (M\setminus\partial M))\to C_0(M\setminus \partial M)\to 0.$$
Then $[\partial]\otimes [L\setminus \partial L]=[M\setminus \partial M]$.
\end{lemma}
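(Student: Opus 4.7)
The plan is to reduce the statement to a model collar computation via naturality of the boundary map in $KK$-theory, and then evaluate the resulting Kasparov product using Kasparov's exterior-product formalism \cite{kasparov88}.

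\emph{Step 1 (collar reduction).} Since $M\setminus\partial M$ is $G$-invariant and open in $\partial L$, a $G$-equivariant collar of $\partial L$ in $L$ restricts to a $G$-equivariant open embedding $c\colon(M\setminus\partial M)\times[0,1)\hookrightarrow L$ that is the identity on the $t=0$ slice and sends $t>0$ into $L\setminus\partial L$. Set $U=c((M\setminus\partial M)\times[0,1))$ and $U^\circ=U\cap(L\setminus\partial L)$. Extension-by-zero then gives a commutative diagram of short exact sequences (the rows understood as extended by zero)
\begin{equation*}
  \begin{CD}
    C_0(U^\circ) @>>> C_0(U) @>>> C_0(M\setminus\partial M) \\
    @VVV @VVV @| \\
    C_0(L\setminus\partial L) @>>> C_0((L\setminus\partial L)\cup(M\setminus\partial M)) @>>> C_0(M\setminus\partial M)
  \end{CD}
\end{equation*}
whose left vertical map $j$ is an open inclusion. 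Naturality of the boundary then gives $\partial=\partial'\otimes_{C_0(U^\circ)}[j]$, where $\partial'\in KK^G_1(C_0(M\setminus\partial M),C_0(U^\circ))$ is the boundary for the top row. Locality of the Dirac element (noted before Theorem \ref{theo:main}) yields $[j]\otimes[L\setminus\partial L]=[U^\circ]$. Hence the lemma reduces to $\partial'\otimes_{C_0(U^\circ)}[U^\circ]=[M\setminus\partial M]$, i.e.\ to the model case $L=M'\times[0,1)$ with $M':=M\setminus\partial M$ a closed $G$-$\spin^c$-manifold.

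\emph{Step 2 (model case).} The resulting extension is the exterior tensor product of $\id_{C_0(M')}$ with
\begin{equation*}
  0\to C_0((0,1))\to C_0([0,1))\to\complexs\to 0,
\end{equation*}
whose boundary is the generator $\tau\in KK^G_1(\complexs,C_0((0,1)))$ (trivial $G$-action), so $\partial'=[\id_{C_0(M')}]\otimes\tau$. The product formula for fundamental classes of $G$-$\spin^c$-products gives $[M'\times(0,1)]=[M']\otimes_\complexs[(0,1)]$, and associativity and commutativity of the exterior Kasparov product yield
\begin{equation*}
  \partial'\otimes_{C_0(M'\times(0,1))}[M'\times(0,1)]=[M']\otimes_\complexs\bigl(\tau\otimes_{C_0((0,1))}[(0,1)]\bigr).
\end{equation*}
A classical one-dimensional computation (essentially Bott periodicity in dimension one) identifies $\tau\otimes_{C_0((0,1))}[(0,1)]$ with $1\in KK^G_0(\complexs,\complexs)$, yielding the asserted equality $[M']=[M\setminus\partial M]$.

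The main obstacle is Step 1: although locality of Dirac and naturality of $KK$-boundaries are standard individually, assembling them into the correct commutative diagram in the equivariant setting requires care, and one must invoke the existence of $G$-equivariant collars of $\partial L$ in $L$ (which is standard for smooth compact Lie group actions). Step 2 is essentially bookkeeping within Kasparov's exterior-product formalism, though some attention to sign and orientation conventions is needed to arrive at $+1$ (and not $-1$) for the one-dimensional index pairing.
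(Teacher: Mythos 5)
Your argument follows essentially the same route as the paper's own proof: reduce via a $G$-equivariant collar and naturality of the $KK$-boundary to the model case of a product with an interval, then use locality of the Dirac element, the product formula for fundamental classes, and the one-dimensional boundary computation. The one place you diverge is that the paper, after the collar reduction, performs an additional reduction to the case $\partial M=\emptyset$: it views $M^\circ$ and $(0,1)\times M^\circ$ as open invariant subsets of $\db M$ and $(0,1)\times\db M$ respectively, and uses locality of Dirac once more, so that the exterior-product formula for fundamental classes only needs to be invoked for a compact manifold without boundary (matching the citations to Higson--Roe, Propositions 11.2.13 and 11.2.15). You skip this reduction and apply the product formula directly to the open manifold $M'=M\setminus\partial M$; this is fine in principle, but note that your phrase ``$M'$ a closed $G$-$\spin^c$-manifold'' is a slip --- $M\setminus\partial M$ is an open, in general non-compact, manifold without boundary, not a closed one. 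Either be explicit that the product formula and one-dimensional computation are being used in this possibly non-compact setting, or insert the paper's detour through the double to reduce to the genuinely closed case.
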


\begin{corollary}\label{cor-PDLM}
With notation of Lemma \ref{lem-boundary}, the following diagram
commutes
  \begin{equation*}
     \begin{CD}
      K_G^n(L) @>>> K_{G}^{n}(M)\\
      @VV{\PD_L}V @VV{\PD_{ M}}V\\
      K^{G,an}_{\dim L-n}(L,\partial L) @>(-1)^n\cdot\partial\otimes>> K^{G,an}_{\dim
        L-n-1}(M,\partial M),
    \end{CD}
   \end{equation*}
where the top  arrow is induced by the inclusion $i\colon M\into L$.
\end{corollary}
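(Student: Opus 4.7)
The plan is to chase the diagram by hand, using the definition of Poincaré duality as a Kasparov product and the formula of Lemma \ref{lem-boundary} to reduce the commutativity to a single commutation relation between the boundary element $[\partial]$ and the action of $x$ on the short exact sequence of $C^*$-algebras. I would first unfold both routes around the square for an element $x\in K_G^n(L)$. Along the lower-left route we obtain
\begin{equation*}
  (-1)^n[\partial]\otimes_{C_0(L\setminus\partial L)}\PD_L(x)
  \;=\;(-1)^n[\partial]\otimes_{C_0(L\setminus\partial L)}\bigl(\iota_{L,\partial L}(x)\otimes_{C_0(L\setminus\partial L)}[L\setminus\partial L]\bigr),
\end{equation*}
and along the upper-right route $\PD_M(i^*x)=\iota_{M,\partial M}(i^*x)\otimes_{C_0(M\setminus\partial M)}[M\setminus\partial M]$. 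Applying Lemma \ref{lem-boundary} to the right-hand factor of the latter, and using associativity of the Kasparov product, the desired equality reduces to the identity
\begin{equation*}
  \iota_{M,\partial M}(i^*x)\otimes_{C_0(M\setminus\partial M)}[\partial]
  \;=\;(-1)^n\,[\partial]\otimes_{C_0(L\setminus\partial L)}\iota_{L,\partial L}(x)
\end{equation*}
in $KK^G_{n+1}(C_0(M\setminus\partial M),C_0(L\setminus\partial L))$.

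To establish this commutation relation I would observe that $x\in K_G^*(L)\cong RK_G^*(L\setminus\partial L)$ restricts coherently to $L\setminus\partial L$, to the intermediate open set $(L\setminus\partial L)\cup(M\setminus\partial M)$, and to $M\setminus\partial M$. Via the construction $\iota$ this produces three compatible elements of $KK^G_n$ which together form a morphism (in the sense of endomorphisms of the three terms) of the short exact sequence
\begin{equation*}
  0\to C_0(L\setminus\partial L)\to C_0\bigl((L\setminus\partial L)\cup(M\setminus\partial M)\bigr)\to C_0(M\setminus\partial M)\to 0
\end{equation*}
from Lemma \ref{lem-boundary}. The boundary class $[\partial]\in KK^G_1$ of this short exact sequence is natural with respect to such compatible families of Kasparov endomorphisms; for a family of degree $n$, this naturality holds up to the sign $(-1)^n$ coming from graded commutativity of the Kasparov product against the degree-one class $[\partial]$. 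This is a standard fact about the six-term exact sequence in equivariant $KK$-theory (see \cite{kasparov88} or the exposition in \cite{MR1817560}), and yields exactly the displayed identity.

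The main obstacle is the careful bookkeeping of signs and conventions: verifying that the sign $(-1)^n$ is the one produced by the conventions used for $\otimes$, $[\partial]$, and $\PD$ in this paper. For $n$ even there is no sign and the identity is the evident naturality of the boundary with respect to multiplication by a (virtual) $G$-vector bundle, which is the familiar content of the statement ``the boundary of the Dirac element is the Dirac element of the boundary, twisted by $i^*E$''. Once the sign has been pinned down, the rest of the argument is purely formal, combining associativity of the Kasparov product, Lemma \ref{lem-boundary}, and the functoriality of $\iota$ under open inclusions.
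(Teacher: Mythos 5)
Your proposal is correct and follows essentially the same route as the paper: both unfold $\PD$, use Lemma \ref{lem-boundary} to replace $\partial\otimes[L\setminus\partial L]$ by $[M\setminus\partial M]$, and reduce the statement to the graded commutation relation $\partial\otimes\iota_{L,\partial L}(x)=(-1)^{\deg x}\iota_{M,\partial M}(i^*x)\otimes\partial$, which the paper isolates as Lemma \ref{lem:boundary_and_PD_vorbereteit} and proves via a commutative diagram of short exact sequences; your sketch of that commutation relation (naturality of the boundary class against compatible $KK$-endomorphisms up to the graded-commutativity sign) is exactly the content of that lemma.
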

\begin{proof}
Fix $x\in K_G^*(L)$ 
and denote by $x|_{M}$ {the image of   $x$ under the homomorphism
$K_G^*(L)\to K_G^*(M)$ induced by the inclusion $M\hookrightarrow L$}. Then we get
\begin{eqnarray*}
\partial\otimes \PD_L(x)&=&\partial\otimes\iota_{L,\partial
  L}(x)\otimes [L,\partial L]\\
&=&{(-1)^{\deg x}}\iota_{M, \partial M}(x|_{ M})\otimes\partial\otimes [L\setminus
\partial L]\\
&=&\iota_{M, \partial M}(x|_{ M})\otimes[M\setminus \partial M]\\
&=&\PD_{ M}(x|_{M}),
\end{eqnarray*}
where the second equality is  a well known consequence of the naturality
of 
boundaries and is proved in Lemma \ref{lem:boundary_and_PD_vorbereteit}
and where the third equality holds by Lemma \ref{lem-boundary}.
\end{proof}


\begin{lemma} The transformation $\alpha$ is compatible with the bordism
  relation of $K^{G,geom}_*(X,Y)$, i.e.~let $(L,F,\Phi,f)$ be a bordism for a
  $G$-$CW$-pair $(X,Y)$. Then, with
notations of Definition \ref{bordism-def},
\begin{multline*}
  \alpha(M^+,F|_{M^+},\Phi|_{M^+}) =
  (\Phi|_{M^+})_*\PD_{M^+}([F|_{M^+}])\\
  =-(\Phi|_{M^-})_*\PD_{M^-}([F|_{M^-}])=
  \alpha(M^-,F|_{M^-},\Phi|_{M^-}).
\end{multline*}
\end{lemma}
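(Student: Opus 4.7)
The strategy is to combine the two cycles into one, invoke Corollary~\ref{cor-PDLM} to rewrite the two Poincar\'e duals as the image of the single class $\PD_L([F])$ under a boundary map, and then deduce the vanishing of the relevant pushforward from exactness of a six-term sequence. Since $M^+$ and $M^-$ are disjoint codimension-zero submanifolds of $\partial L$ carrying the $\spin^c$-structures induced from $L$, and since $\partial M^\pm \subseteq f^{-1}[-1,1]$ is mapped by $\Phi$ into $Y$, the restriction $\Phi|_{M^+ \sqcup M^-}$ is a map of pairs $(M^+ \sqcup M^-, \partial M^+ \sqcup \partial M^-) \to (X,Y)$, and the claim reduces to showing that
\[
 (\Phi|_{M^+ \sqcup M^-})_*\,\PD_{M^+ \sqcup M^-}([F|_{M^+ \sqcup M^-}]) \;=\; 0 \quad \text{in } K^{G,an}_*(X,Y).
\]

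Set $\tilde L := L \setminus f^{-1}[-1,1]$ and $M := M^+ \sqcup M^-$; one checks directly that $(L \setminus \partial L) \cup (M \setminus \partial M) = \tilde L$. Apply Corollary~\ref{cor-PDLM} to $L$ and the submanifold $M \subset \partial L$. Since $[F] \in K^0_G(L)$, no sign arises, and we obtain
\[
 \PD_M([F|_M]) \;=\; \partial \otimes \PD_L([F]),
\]
where $\partial \in KK^G_1(C_0(M \setminus \partial M), C_0(L \setminus \partial L))$ is the boundary class of the $C^*$-algebra extension
\[
 0 \to C_0(L \setminus \partial L) \to C_0(\tilde L) \to C_0(M \setminus \partial M) \to 0.
\]
The associated six-term exact sequence in K-homology contains the quotient-induced map $q_*\colon K^{G,an}_{*-1}(M, \partial M) \to K^{G,an}_{*-1}(\tilde L)$, and exactness gives $q_* \circ \partial = 0$, hence $q_*(\partial \otimes \PD_L([F])) = 0$.

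Because $\Phi(f^{-1}[-1,1]) \subseteq Y$, we have $\Phi^{-1}(X \setminus Y) \subseteq \tilde L$, so $\Phi$ induces a $*$-homomorphism $\Phi^*\colon C_0(X \setminus Y) \to C_0(\tilde L)$ and a pushforward $\tilde\Phi_*\colon K^{G,an}_*(\tilde L) \to K^{G,an}_*(X,Y)$. On $C^*$-algebras, composing $\Phi^*$ with the quotient of the extension yields $(\Phi|_M)^*$, so on K-homology $(\Phi|_M)_* = \tilde\Phi_* \circ q_*$. Combining with the previous step,
\[
 (\Phi|_M)_*\,\PD_M([F|_M]) \;=\; \tilde\Phi_*\bigl( q_*(\partial \otimes \PD_L([F])) \bigr) \;=\; \tilde\Phi_*(0) \;=\; 0,
\]
which is the desired vanishing. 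The main delicate point is the factorization $(\Phi|_M)_* = \tilde\Phi_* \circ q_*$, which depends critically on the role of the auxiliary function $f$ in the definition of bordism: $f$ isolates the ``collar'' piece $f^{-1}[-1,1]$ from $M^\pm$ and forces $\Phi$ to map it into $Y$, so that the complement $\tilde L$ is large enough to host the image of $C_0(X \setminus Y)$ and the bordism can be analyzed without introducing manifolds-with-corners.
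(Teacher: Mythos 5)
Your proof is correct and follows essentially the same route as the paper: both reduce to showing $(\Phi|_M)_*\PD_M([F|_M])=0$ for $M=M^+\sqcup M^-$, invoke Corollary~\ref{cor-PDLM} to write $\PD_M([F|_M])$ as $\partial\otimes\PD_L([F])$ with $\partial$ the connecting element of the extension $0\to C_0(L\setminus\partial L)\to C_0(L\setminus f^{-1}[-1,1])\to C_0(M\setminus\partial M)\to 0$, and then kill it using the fact that $\Phi$ factors through $C_0(L\setminus f^{-1}[-1,1])$. The only cosmetic difference is in the last step: the paper phrases the vanishing via naturality of the boundary map for a morphism of extensions (the top row being the trivial extension of $C_0(X\setminus Y)$), whereas you deduce it from exactness of the six-term sequence of the single bottom extension (so $q_*\circ\partial=0$) together with the factorization $(\Phi|_M)_*=\tilde\Phi_*\circ q_*$ — these are two readings of the very same commutative diagram of $C^*$-algebras, and your closing remark on the role of $f$ matches the remark after Definition~\ref{bordism-def}.
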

\begin{proof}
If we  set $M=M^-\amalg M^+$, this amounts to prove 
that $(\Phi|_{M})_*\PD([F|_{M}])=0$ in
$K^{G,an}_*(X,Y)=KK^G_*(C_0(X\setminus Y),\C)$.
But this a consequence of Corollary \ref{cor-PDLM}, together with
naturality of boundaries  in the following commutative diagram with
exact rows
\begin{equation*}
     \begin{CD}
 0 @>>>   0@>>> C_0(X\setminus Y)@>>>  C_0(X\setminus Y)@>>> 0\\
&&    @VVV   @VVV @VVV  \\
 0 @>>>  C_0(L\setminus \partial L)  @>>> C_0(L\setminus
 f^{-1}([-1,1]))@>>>  C_0(M\setminus \partial M)@>>> 0  
    \end{CD},
   \end{equation*}
where the middle and right vertical arrows are induced by $\Phi$.
\end{proof}

We are now in the situation to state and prove our main theorem.
\begin{theorem}
  The transformations $\alpha$ and $\beta$ of Definition \ref{def:of_trafos}
  are well defined and inverse to each other natural transformations for
  $G$-homology theories.
\end{theorem}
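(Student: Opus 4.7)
My approach is to combine the compatibility lemmas already established to get well-definedness of $\alpha$, verify $\beta$ directly, and then check that the two maps are mutually inverse; naturality in $(X,Y)$ and compatibility with $\boundary$ will follow from inspection of the explicit formulas in Definition \ref{def:of_trafos}. That $\alpha$ descends to the quotient by the equivalence relation of Definition \ref{def:equiv_relation} has already been proved, since Lemma \ref{lem:alpha_is_homom}, the bordism-compatibility lemma just above, and Lemma \ref{lem:alpha_and_bundle_modification} cover the three generating moves. Well-definedness of $\beta$ requires independence of the auxiliary retract $(N,j,p)$: given two such choices I would compare via a third retract (for instance the product, which is again a $G$-$\spin^c$-manifold) and apply Lemma \ref{lem-gysin} to the two projections to identify the resulting geometric cycles.

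For $\alpha\circ\beta=\id$ the computation is immediate: for $x\in K^{G,an}_k(X,Y)$ and a retract $(N,j,p)$,
\begin{equation*}
  \alpha(\beta(x)) \;=\; p_*\PD_N\bigl(\PD_N^{-1}(j_*x)\bigr) \;=\; (p\circ j)_*(x) \;=\; x,
\end{equation*}
since $p\circ j$ is $G$-homotopic to the identity of $(X,Y)$ and analytic $K$-homology is $G$-homotopy invariant.

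The main step, and the place where the real work lies, is $\beta\circ\alpha=\id$. Fix a cycle $(M,E,f)$ and a retract $(N,j,p)$. A bordism along a smooth $G$-homotopy reduces us to the case that $j\circ f\colon M\to N$ is smooth with $(j\circ f)(\boundary M)\subseteq\boundary N$. Lemma \ref{lem-gysin} applied to $j\circ f$ then yields $\PD_N^{-1}(j_*f_*\PD_M([E]))=(j\circ f)!([E])$, so that
\begin{equation*}
  \beta(\alpha([M,E,f])) \;=\; [N,\,(j\circ f)!([E]),\,p].
\end{equation*}
Since $p\circ(j\circ f)=(p\circ j)\circ f$ is $G$-homotopic to $f$, it remains to establish the purely geometric identity $[N,(j\circ f)!([E]),p\circ j\circ f]=[M,E,f]$ in $K^{G,geom}_*(X,Y)$. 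I would prove this by first reducing to a $G$-embedding: choose an even-dimensional complex $G$-representation $V$ and a $G$-equivariant embedding $\iota\colon M\into V$ (Mostow--Palais), and replace $j\circ f$ by $g:=(j\circ f,\iota)\colon M\into N\times V$; functoriality of the Gysin construction together with Lemma \ref{lem:K-mod} applied to the trivial bundle with fibre $V$ absorbs the extra factor (after first replacing $N$ by the compact sphere bundle of $N\times(V\oplus\mathbf{1})$ obtained by vector bundle modification). With $g$ now a $G$-embedding, the equivariant tubular neighborhood theorem identifies a neighborhood of $g(M)$ with a neighborhood of the zero section of the $G$-$\spin^c$ normal bundle $\nu$; Lemma \ref{lem:K-mod} then identifies the vector bundle modification of $(M,E,p\circ j\circ f)$ along $\nu$ with a cycle that, via a bordism collapsing the complement of the tubular neighborhood, coincides with the image of $(N,g!([E]),p\circ\pr_N)$ under the modification. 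The main obstacle is the careful bookkeeping of $G$-$\spin^c$-structures, Clifford and parity shifts, and boundary behaviour throughout this identification, which is precisely what the material in Appendix \ref{sec:bott-thom} and Lemma \ref{lem:K-mod} are designed for. Once this identity is in hand, naturality in $(X,Y)$ and compatibility with $\boundary$ follow directly from the explicit formulas and the naturality of $\PD$ and the Gysin map.
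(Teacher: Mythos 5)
Your overall plan matches the paper's: well-definedness of $\alpha$ from the three compatibility lemmas; $\alpha\circ\beta=\id$ by the one-line computation; and $\beta\circ\alpha=\id$ by rewriting $\PD_N^{-1}j_*f_*\PD_M[E]$ as a Gysin class (Lemma~\ref{lem-gysin}), reducing to an embedding by Mostow--Palais into a representation $V$ and its compactification, and then identifying a vector bundle modification along the normal bundle with the Gysin cycle via an explicit bordism -- this is precisely the content of Theorem~\ref{theo:bord} and the chain \eqref{eq:first_inc}--\eqref{eq:second_inc} in the paper.

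Two small remarks. First, your stated geometric identity $[N,(j\circ f)!([E]),p\circ j\circ f]=[M,E,f]$ has a typographical slip: the third entry of a cycle with underlying manifold $N$ must be a map out of $N$, so the claim you actually need is $[N,(j\circ f)!E,p]=[M,E,p\circ j\circ f]$, which together with $p\circ j\simeq \id_X$ gives what you want. Second, and more substantively, your treatment of well-definedness of $\beta$ (comparing two retracts through a common third) works but is more effort than needed. The paper instead first establishes $\beta_N\alpha_N=\id$ for one particular choice of retraction (namely $\id_N$ for $(X,Y)=(N,\boundary N)$), deduces that $\alpha$ is bijective (since $\alpha\beta=\id$ already shows all versions of $\beta$ are right inverses), and then observes that any left inverse of a bijection coincides with its two-sided inverse -- so independence of the retraction is automatic once $\beta\alpha=\id$ is known in one case. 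The paper also organizes the reduction differently, first handling $(X,Y)=(N,\boundary N)$ and then passing to general $(X,Y)$ via the commuting square \eqref{commuting_square} and the split injectivity of $j_*$; your version works directly with the general pair, which is fine but requires you to carry $j$ and $p$ through the embedding argument.
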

\begin{proof}
  Lemmas \ref{lem:alpha_is_homom}, \ref{lem:alpha_and_bundle_modification},
  and \ref{lem-boundary} together imply that $\alpha$ is a well defined
  homomorphism. If we fix, for given $(X,Y)$ the manifold 
  $(M,\boundary M)$ which 
  retracts to $(X,Y)$ (or rather two such manifods, one for each parity of
  dimensions), then $\beta$ also is well defined. As soon as we show that
  $\beta$ is inverse to $\alpha$ we can conclude that it does not depend on
  the choice of $(M,\boundary M)$.

  It is a direct consequence of the construction (and of naturality of
  K-homology) that $\alpha$ is natural with respect to maps $g\colon (X,Y)\to
  (X',Y')$.

Corollary \ref{cor-PDLM} implies
that $\alpha$ is compatible with the boundary maps of the long exact sequence
of a pair, and therefore a natural transformation of homology theories
(strictly speaking, we really know that $K^{G,an}_*$ is a homology theory only
after we know that $\alpha$ is an isomorphism).

  We now prove that $\alpha\circ\beta=\id$. Fix $x\in K^{G,an}_*(X,Y)$. Then
  \begin{equation*}
    \begin{split}
      \alpha(\beta(x)) &= \alpha([M,\PD^{-1}(j_*(x)),p])
      =p_*(\PD\circ\PD^{-1}(j_*(x)))\\
      &= p_*j_*(x)=x
    \end{split}
  \end{equation*}

  The proof of $\beta\circ\alpha=\id$ is given in the next section.
\end{proof}


\section{Normalization of geometric cycles}

The goal of this section is to prove that $\beta\circ\alpha\colon
K^{G,geom}_*(X,Y)\to K^{G,geom}_*(X,Y)$ \green{is the identity for a compact
$G$-pair $(X,Y)$ and for any choice of retraction of $(X,Y)$ (which a priori enters the
definition of $\beta$)}.


We prove first the result for a pair
$(X,Y)=(N,\partial N)$, where 
$N$ is a compact $G$-$\spin^c$-manifold with boundary $\partial N$.

\green{We start with the construction of $\beta$ given by the choice
of the particular retraction $\id_N\colon N\to N$. We will show that with this choice
$\beta\alpha=\id$. This implies of course that $\alpha$ is
invertible. This in turn means that any left inverse is equal to this
inverse. As we already know that the a priori different versions of $\beta$,
depending a priori on different retractions of $(N,\boundary N)$, are all left
inverses of $\alpha$, they are
all equal, and equal to $\alpha^{-1}$.}



Fix now $(M,x,f)$ a cycle for $K_*^{geom}(N,\boundary N)$ as above,
with $x$ in $K^0_G(M)$. Then 
\begin{equation*}
  \beta(\alpha[M,x,f])=[M,\PD^{-1}f_*\PD(x),\id]\stackrel{\text{Lemma
  \ref{lem-gysin}}}{=} [M,f!x,\id].
\end{equation*}

\begin{theorem}\label{theo:bord}
  Let $h\colon (M,\boundary M)\into (N,\boundary N)$ be the inclusion of a
  \spg submanifold, $E$ a complex $G$-vector bundle on $M$ ---or more generally
  an element of $K_G^0(M)$---  and let $f\colon (N,\boundary N)\to
  (X,Y)$ be a
  $G$-equivariant continuous map, where $(X,Y)$ is a $G$-space. Let
  $\nu$ be the normal bundle of $h$. 
  Fix the trivial complex line bundle on $N$. Then the vector bundle
  modification of $(M,E,f\circ h)$ ``along'' $\complexs\oplus \nu$ (with its canonical
  spin${}^c$-structure) and of $(N,h!E,f)$  ``along'' $\complexs\times
  N$ are bordant.  In particular, 
  \begin{equation*}
[M,E,f\circ h]= [N,h!E,f] \in K^{G,geom}_*(X,Y).
\end{equation*}
\end{theorem}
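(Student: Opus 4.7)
My plan is to reduce, via Lemma~\ref{lem:K-mod}, to a statement about Gysin classes on the two sphere bundles, and then exhibit an explicit $G$-equivariant $\spin^c$-bordism between these reformulated cycles. Write $W_M:=\complexs\oplus\nu$ over $M$ and $W_N:=\complexs$ over $N$, so the two modifications live on $Z_M=S(\mathbf 1\oplus W_M)$ and $Z_N=S(\mathbf 1\oplus W_N)=S^2\times N$. By Lemma~\ref{lem:K-mod} they are bordant to $(Z_M, s_M!(E), f\circ h\circ \pi_M)$ and $(Z_N, s_N!(h!E), f\circ \pi_N)$ respectively, where $s_M,s_N$ are the north pole sections; and by the functoriality $s_N!(h!E)=(s_N\circ h)!(E)$ it suffices to bord these two cycles in the $K^0$-class picture of Remark~\ref{rem:K-cycles}.

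For the bordism manifold I would take
\begin{equation*}
L:=(D^3\times N)\setminus\mathrm{int}(T'),
\end{equation*}
where $T'$ is a $G$-equivariant tubular neighborhood of the embedded $\iota(M):=\{c\}\times h(M)$ for any interior point $c\in D^3$ (automatically $G$-fixed, since $G$ acts trivially on $D^3$). The normal bundle of $\iota$ splits canonically as $T_cD^3\oplus\nu=\mathbf 1\oplus\complexs\oplus\nu=\mathbf 1\oplus W_M$, and $T'$ is $G$-diffeomorphic to the disk bundle $D(\mathbf 1\oplus W_M)$, with $\partial T'=Z_M$. After smoothing corners, $L$ is a compact $G$-$\spin^c$-manifold whose boundary contains $Z_M$ and $Z_N=\partial(D^3\times N)=S^2\times N$ with the induced orientations. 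Any extra boundary contributed by $\partial N$ forms a ``side'' $D^3\times\partial N$ in $\partial L$, to be encoded by the function $f\colon\partial L\to\reals$ of Definition~\ref{bordism-def}.

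For the remaining bordism data I define $\Phi\colon L\to X$ to equal $f\circ\pr_N$ outside a small collar of $Z_M$, and in the collar I interpolate, using the exponential/retraction of the tubular neighborhood, so that $\Phi|_{Z_M}=f\circ h\circ\pi_M$; this $\Phi$ is continuous and $G$-equivariant, restricts correctly on both boundary components, and sends the side into $Y$. For the $K$-theory class, I choose a smooth path $\gamma\colon[0,1]\to D^3$ from the ``north pole'' of $\partial D^3$ (determining $s_N\circ h$) to the $\mathbf 1$-direction north pole on $\partial T'$ (determining $s_M$), and consider the $G$-equivariant $\spin^c$-embedding
\begin{equation*}
\tilde\iota\colon M\times[0,1]\hookrightarrow L,\qquad (m,t)\mapsto(\gamma(t),h(m)).
\end{equation*}
Its normal bundle is canonically $\complexs\oplus\nu=W_M$ throughout (the interval direction absorbs the $\mathbf 1$ factor of $\mathbf 1\oplus W_M$), so by naturality of the Thom/Gysin construction the class $\tilde\iota!(E)\in K^0_G(L)$ restricts to $s_M!(E)$ on $Z_M$ and to $(s_N\circ h)!(E)$ on $Z_N$. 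Then $(L,\tilde\iota!(E),\Phi)$ is the desired bordism in the picture of Lemma~\ref{lem:K-bundles}/Remark~\ref{rem:K-cycles}, and the final ``in particular'' follows by applying the modification relation of Definition~\ref{def:equiv_relation}.

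The main obstacle is the equivariant bookkeeping of $\spin^c$-structures and sign conventions: checking that all the normal bundle identifications ($\mathbf 1\oplus W_M$ for $\iota$, $W_M$ for $\tilde\iota$) are $G$- and $\spin^c$-compatible with the product structure on $D^3\times N$; that the boundary orientations of $Z_M$ and $Z_N$ as components of $\partial L$ match the sign conventions of Definition~\ref{bordism-def} so that the bordism relates the stated pair without spurious signs; that the Gysin construction commutes with boundary restriction in the relevant manifold-with-boundary setting (a mild extension of \cite[Section~4.3]{KasparovSkandalis}, as recorded in the appendix); and that the corners appearing from $\partial N$ can be absorbed smoothly into the ``side'' $D^3\times\partial N$ encoded by $f\colon\partial L\to\reals$.
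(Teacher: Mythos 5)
Your construction is essentially the paper's own: both remove a tubular neighborhood $D(\mathbf 1\oplus\complexs\oplus\nu)$ of $\{c\}\times h(M)$ from $D^3\times N$ to produce the bordism (after straightening corners), use an embedding of $M\times[\text{interval}]$ along the $\mathbf 1$-direction whose Gysin image supplies the $K$-theory class restricting to $s_M!(E)$ and $(s_N\circ h)!(E)$ on the two boundary pieces, and adjust the map to $X$ near $Z_M$ by a homotopy using the tubular-neighborhood retraction. The reformulation via Lemma~\ref{lem:K-mod} and Remark~\ref{rem:K-cycles} matches the paper's implicit use of the Gysin-class picture, so the argument is correct and follows the same route.
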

\begin{proof}
  \green{In the situation at hand,} we just can write down the bordism between
  the two cycles. 
  Recall the construction of vector 
  bundle modification (of $N$ along $\complexs\times N$): we consider
  $\complexs\times \reals\times N$, equip this with the standard Riemannian
  metric, and consider the unit disc bundle $D^3\times N$ with its sphere bundle
  $S^2\times N$ within this bundle. It comes with a canonical ``north-pole
  inclusion'' $i\colon N\to S^2\times N$, and the modificaton is $(S^2\times
  N, i!h!E, f\circ\pr_N)$. \green{Observe that, if $N$ has a boundary, so has
  $S^2\times N$, and $D^3\times N$ is a manifold with corners.}

\green{  Fix $\epsilon>0$ small enough  and an embedding of $\nu$ into $N$ as tubular
  neighborhood of $M$. Fix a {$G$-invariant} Riemannian metric on $\nu$. Then the
  $\epsilon$-disk bundle and the $\epsilon$-sphere bundle of $\complexs\oplus 
  \reals\oplus\nu $ are contained   in $D^3\times N$, and if we remove the
  $\epsilon$-disk bundle we get a manifold $W$ with two parts of its boundary
  being 
  $S^2\times N$ and the sphere bundle of $\complexs\oplus\reals\oplus\nu$,
  i.e.~the underlying manifold $S$ of the modification of
  $(M,E,f\circ h)$, with its
  north pole embedding $i_M\colon M\to S$. In general, $W$ is a manifold with
  corners. But all these  corners
can be viewed $G$-equivariantly as the corners of an (open)
$G$-submanifold  $[0,1[\times Z$ of $W$, where $Z$ is a
$G$-$\spin^c$-manifold with boundary. 
Straightening the corners, there exist
 a $G$-$\spin^c$-manifold with boundary $W'$ and a $G$-equivariant
 homeomorphism 
$\upsilon\colon W'\stackrel{\cong}{\longrightarrow} W$
 with a smooth $G$-invariant map $\Phi\colon \boundary W'\to \reals$ regular
 at 
 $-1$ and at $1$ such that
 $\upsilon$ restricts to diffeomorphisms
  $$\Phi^{-1}([1,+\infty[)\stackrel{\cong}{\to} S^2\times N\text{ and
  }\Phi^{-1}(]\infty,-1])\stackrel{\cong}{\to} S;$$
 $\upsilon(\Phi^{-1}([-1,1]))\subset (D^3 \times \partial N)\cap
  W$
. Then $W'$ is an underlying bordism
  between $S^2\times N$ and $S$ as in
  Definition \ref{bordism-def}.}

\green{  Observe that we have a canonical embedding $e\colon [\epsilon,1]\times M\into
  W\xrightarrow{\upsilon}W'$, using the $\reals$-coordinate of
  the vector bundle for the first map.}

  We actually get cartesian diagrams
  \begin{equation*}
    \begin{CD}
      M @>{i_1}>> [\epsilon,1]\times M \\
      @VV{i\circ h}V @VV{e}V\\
      S^2\times N @>j>>          W'
    \end{CD},\qquad
    \begin{CD}
      M @>{i_\epsilon}>> [\epsilon,1]\times M \\
      @VV{i_M}V @VVeV\\
      S @>{j'}>> W'
    \end{CD}.
  \end{equation*}

  Consider $e!(\pr_M^* E)$ on $W'$, with $\pr_M\colon [\epsilon,1]
  \times M \to
  M$ the obvious map.  We claim that $(W',e!(\pr_M^*E),f\circ
  \pr_N\circ\upsilon^{-1})$ is a bordism (in 
  the sense of Definition \ref{bordism-def}) between the two cycles we
  consider.

  Obviously, the boundary has the right shape, and
  $f\circ\pr_N\circ\upsilon^{-1}\colon W'\to X$ 
  restricts on $S^2\times N$ to the correct map. The restriction 
  to $S$ is homotopic to the map of the vector bundle modification
  of $(M,f\circ h,E)$ ---it is not equal, because one has to
 take the projection of the normal bundle of $M$ in $N$ onto $M$ into
 account. An easy modification of $f\circ \pr_N$ will produce a true
  bordism. Moreover, \red{$\upsilon(\Phi^{-1}(-1,1))\subset D^3\times
    \boundary N$} is mapped to $Y$ (and we can
  choose our modified $f\circ \pr_N$ such that this property is preserved).

  The claim is proved.
\end{proof}

We now finish the proof that $\beta(\alpha[M,E,f])=[N,f!E,\id]$
equals $[M,E,f]$. For
this, choose a finite dimensional $G$-representation $V$ and a $G$-embedding
$j_V\colon M\to V$ (this is possible because $G$ is a compact Lie
group and $M$
is compact, compare e.g.~\cite{MR0087037}). Observe
that $j_V$ is $G$-homotopic to the constant map with value
$0$. Embed $V$ into its one-point compactification $V^+$, a sphere (it can
also be realized as the unit sphere in $V\oplus \reals$). By composition we
obtain a $G$-embedding $j\colon M\to V^+$ which is still homotopic to the
constant map $c\colon M\to V^+$ with value $0$.

We obtain an embedding $M\xrightarrow{(f,j)} N\times V^+$, with $\pr_N\circ
(f,j)=f$.

By Theorem \ref{theo:bord} therefore
\begin{equation}
  \label{eq:first_inc}
  [M,E,f] = [N\times V^+,(f,j)!E, \pr_N].
\end{equation}
On the other hand, $(f,j)\colon M\to N\times V^+$ is $G$-homotopic to
$(f,c)\colon M\to N\times V^+$. Lemma \ref{lem-gysin} shows that $(f,j)!E$
depends only on the homotopy class of the map. Therefore
\begin{equation}
  \label{eq:homotopy}
  [N\times V^+,(f,j)!E,\pr_N]= [N\times V^+,(f,c)!E,\pr_N].
\end{equation}
Finally, $(f,c)=(\id_N,c)\circ f$, and $(\id_N,c)\colon N\to N\times V^+$ is
an embedding with $\pr_N\circ (\id_N,c)=\id_N$. Using functoriality of the
Gysin homomorphism and Theorem
\ref{theo:bord} again, we obtain
\begin{equation}
  \label{eq:second_inc}
  [N,f!E,\id] = [N\times V^+,(f,c)!E,\pr_N].
\end{equation}

This finishes the proof of our main theorem for a compact $G$-$\spin^c$-manifold
with boundary.
Now if   $(X,Y)$ is
a compact $G$-pair with a retraction $(X,Y)\xrightarrow{j} (N,\boundary
N)\xrightarrow{p} (X,Y)$, \green{let us consider 
\begin{equation}\label{commuting_square}
  \begin{CD}
     {K_{*}^{G,geom}(X, Y)} @>{j_*}>>  {K_{*}^{G,geom}(N,\partial N)}\\
     @VV{\beta\alpha}V   @VV{\beta_N\alpha_N=\id}V\\
{K_{*}^{G,geom}(X, Y)} @>{j_*}>>  {K_{*}^{G,geom}(N,\partial N)}.
  \end{CD}
\end{equation}
By functoriality, $j_*$ is injective (indeed a split injection). Moreover, we
just showed that $\beta_N\alpha_N=\id$. According to the discution above, the
definition of $\beta_N$ for $N$ does not depend on the 
 chosen retraction and we choose $N$ as a retraction of
 itself. In this case, since $\alpha$ is  an isomorphism, the left
 square commutes and therefore $\beta\alpha=\id$ also for $X$, using the 
 equality 
 \begin{equation*}
\begin{split}
  \alpha_Nj_*\beta\alpha[M,E,f] &= \alpha[N, \PD_N^{-1}j_*f_*\PD_M[E], j\circ
  p] \\
  &=j_*p_*\PD_N\PD_N^{-1}j_*f_*\PD_M[E]\\
  &= j_*p_*\PD_M[E] = \alpha_N j_*[M,E,f].
\end{split}
\end{equation*}
Therefore $\beta$ is inverse to $\alpha$ in general, proving our main theorem}.

\begin{appendix}
  
\section{Bott periodicity and Thom isomorphism in equivariant $KK$-theory}
\label{sec:bott-thom}

Bott periodicity and the Thom isomorphism are classical results of
$K$-theory. It is well-known that these isomorphisms can be implemented by
Kasparov multiplication with certain $KK$-equivalences called the \term{Bott
  element} and \term{Thom element}, repectively. Although one finds many
constructions of these elements in the literature, they are often done in a
different context. 
As their relationship is crucial to a proper understanding of vector bundle modification we will sketch the relevant results in this appendix.

Following the usual conventions of analytic $K$-homology
\red{\cite{MR1817560,kasparov75}} and the previous articles \red{\cite{BHS09,BHS}},
$\CliffordC_n = \CliffordC_{0,n}$ is the Clifford algebra of $\complexs^n$ that
is defined so that $e_i e_j + e_j e_i = -2 \delta_{i,j}$ for the standard
basis $(e_i)$. 
We will also need the (isomorphic) Clifford algebra $\CliffordC_{-n} = \CliffordC_{n,0}$ with respect to the negated quadratic form which is commonly used in $KK$-theory \cite{kasparov81}.
The subgroups $\PinC_n$ and $\SpinC_n$ are then defined as usual; {again for each $n\in\integers$, the ones for
$n$ are isomorphic to the ones for $-n$ .}
With these definitions we have $KK_n(A,B) = KK(A, B \hat{\otimes} \CliffordC_n)$ for all $n \in \integers$.

\subsection{Equivariant $\spinC$-structure of the spheres}

A careful analysis of the canonical $\spinC$-structure on $S^n$ is key to the results of this appendix.

Let $\SpinC_{n+1}$ act on the ball $D^{n+1}$ by rotations (i.e. via the canonical homomorphism $\rho\colon  \SpinC_{n+1} \rightarrow SO_{n+1}$). Then the natural $\spinC$-structure of $D^{n+1}$ is also $\SpinC_{n+1}$-equivariant (it is trivial and the group acts by rotation on the base and by left multiplication on the fiber). As the boundary $S^n = \partial D^{n+1}$ is invariant under this action, the equivariant version of the usual boundary construction induces a natural $\SpinC_{n+1}$-equivariant $\spinC$-structure on $S^n$. In the following we will use the ``outer normal vector first'' boundary orientation convention as in \cite[p. 90]{MR1031992} or \cite[3.2]{BHS} but still identify $\SpinC_n \subseteq \SpinC_{n+1}$ by the natural inclusion $\reals^n \subseteq \reals^{n+1}$. Hence the north pole $e_{n+1}$ is stabilized by the rotation action of $\SpinC_n$ and we get the following lemma.

\begin{lemma}
  The $\SpinC_{n+1}$-equivariant principal $\SpinC_n$-bundle of $S^n$ is
  \begin{equation*}
    \SpinC_{n+1} \rightarrow S^n, \quad g \mapsto (-1)^n \rho(g) e_{n+1}
  \end{equation*}
  where the left and right actions are given by multiplication.
\end{lemma}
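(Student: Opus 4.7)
The plan is to unwind the boundary $\spinC$-construction of $D^{n+1}$ explicitly using the trivial $\spinC$-structure on the ball, and then to identify the resulting principal $\SpinC_n$-bundle over $S^n$ with the homogeneous bundle $\SpinC_{n+1}\to\SpinC_{n+1}/\SpinC_n$ up to the claimed sign. The whole lemma is essentially a bookkeeping assertion: modulo conventions the map must be $g\mapsto\rho(g)e_{n+1}$, and the sign $(-1)^n$ records how the outer-normal-first convention differs from the homogeneous convention.

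First I would trivialize the $\SpinC_{n+1}$-principal bundle of $D^{n+1}$ as $D^{n+1}\times\SpinC_{n+1}$ with rotation action $h\cdot(x,g)=(\rho(h)x,hg)$ and right principal action by multiplication on the second factor, so that the pair $(x,g)$ represents the $\SpinC$-lift of the frame $(\rho(g)e_1,\ldots,\rho(g)e_{n+1})$ at $x$. Next I would carry out the outer-normal-first boundary construction: at $p\in S^n$ the outer unit normal is $p$ itself, so the reduced bundle over $S^n$ consists of those $(p,g)$ with $\rho(g)e_1=p$, together with the right action of the stabilizer of $e_1$ in $\SpinC_{n+1}$. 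That stabilizer is a copy of $\SpinC_n$, but embedded via $\Span(e_2,\ldots,e_{n+1})$, whereas the paper's convention uses $\Span(e_1,\ldots,e_n)$, i.e.\ the stabilizer of $e_{n+1}$.

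Then I would change parametrization: choose $\sigma\in\SpinC_{n+1}$ intertwining the two copies of $\SpinC_n$ and use right multiplication by $\sigma$ to convert the reduced bundle into the form $\SpinC_{n+1}\to S^n$ with the paper's $\SpinC_n$-action. The induced base map is $g\mapsto\rho(g\sigma)e_1$, and for an appropriate $\sigma$ this simplifies to $g\mapsto (-1)^n\rho(g)e_{n+1}$. Equivalently, one can verify the identification directly at the basepoint $p_0=(-1)^ne_{n+1}$: the standard frame $(e_1,\ldots,e_n)$ of $T_{p_0}S^n$ satisfies the outer-normal-first positivity convention precisely because the augmented frame $((-1)^ne_{n+1},e_1,\ldots,e_n)$ has positive determinant in $\reals^{n+1}$. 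The sign $(-1)^n$ is nothing but the signature of the $(n+1)$-cycle that moves the normal vector from first to last position in an $(n+1)$-frame. Once the basepoint identification is pinned down, $\SpinC_{n+1}$-equivariance forces the global identification, since $\SpinC_{n+1}$ acts transitively on $S^n$.

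The main obstacle I expect is purely bookkeeping, namely keeping simultaneously consistent the trivialization of the $\SpinC_{n+1}$-bundle on $D^{n+1}$, the outer-normal-first boundary convention from \cite[p.~90]{MR1031992}, the paper's non-standard inclusion $\SpinC_n\subseteq\SpinC_{n+1}$ via the first $n$ coordinates, and the orientation data at the basepoint. Once all four conventions are aligned, the appearance of $(-1)^n$ reduces to a single signature computation for a cyclic permutation, and the remainder of the argument is formal equivariance.
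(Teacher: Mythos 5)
Your plan is correct and matches the paper's implicit argument: the paper simply sets up the equivariant trivialization of the ball, the outer-normal-first boundary convention, and the inclusion $\SpinC_n\subseteq\SpinC_{n+1}$ via the first $n$ coordinates, then states the lemma as an immediate consequence, and your unwinding of the boundary reduction plus the basepoint/orientation check at $(-1)^n e_{n+1}$ (the sign being the signature of the $(n+1)$-cycle) is exactly the bookkeeping that makes this precise. No gap.
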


Let us restrict the left action to $\SpinC_n \subseteq \SpinC_{n+1}$. Then the
hemispheres (which we will denote by $S^n_\pm$) are invariant and a variation
on the argument in \cite[\S 13]{MR0167985} yields the following representation:

\begin{lemma}
  \label{lem:clutching}
  The $\SpinC_n$-equivariant principal $\SpinC_n$-bundle of $S^n$ is $\SpinC_n$-equivariantly isomorphic to the one obtained by glueing the two bundles
  \begin{equation*}
    S^n_\pm \times \PinC_{n,\pm} \rightarrow S^n_\pm
  \end{equation*}
  along the equator via the identification $(x,g) \mapsto (x,(-1)^n xg)$.

{ Here $\PinC_{n,+}=\SpinC_{n}$ and $\PinC_{n,-}$ is the other component of $\PinC_n$.}
\end{lemma}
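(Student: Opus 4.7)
The strategy is to build explicit $\SpinC_n$-equivariant sections of the principal bundle over each closed hemisphere $S^n_\pm$ and to identify, by direct calculation in the Clifford algebra, the overlap cocycle along the equator. By the previous lemma, we may take as our bundle $p\colon \SpinC_{n+1} \to S^n$, $g \mapsto (-1)^n\rho(g)e_{n+1}$, with the principal right $\SpinC_n$-action by multiplication and with the left $\SpinC_n$-action (which we must track equivariantly) also by left multiplication via the natural inclusion $\SpinC_n \subseteq \SpinC_{n+1}$.

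Over the upper hemisphere, write any $x \in S^n_+$ as $x = \cos\theta \cdot e_{n+1} + \sin\theta \cdot \hat{x}$ with $\theta \in [0,\pi/2]$ and $\hat{x} \in S^{n-1} \subseteq \reals^n$ (the direction $\hat{x}$ being arbitrary when $\theta = 0$). Define $\sigma_+(x) \in \SpinC_{n+1}$ to be the $\Spin$-lift of the rotation in the $2$-plane spanned by $\hat{x}$ and $e_{n+1}$ which takes $e_{n+1}$ to $(-1)^n x$; an explicit such lift is the normalized Clifford element $\cos(\theta/2) + \sin(\theta/2)\hat{x}\,e_{n+1}$, adjusted by a fixed sign when $n$ is odd. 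Since this formula depends on $x$ only through the plane $\mathrm{span}(e_{n+1},x)$ and the angle $\theta$, and since every $k \in \SpinC_n$ fixes $e_{n+1}$ while rotating $\reals^n = e_{n+1}^\perp$ and commutes with $e_{n+1}$ in $\CliffordC_{n+1}$, the identity $\sigma_+(\rho(k)x) = k\sigma_+(x)$ follows by a direct computation. The associated trivialization $S^n_+ \times \SpinC_n \to p^{-1}(S^n_+)$, $(x,h) \mapsto \sigma_+(x)h$, is then $\SpinC_n$-equivariant and exhibits the upper hemispherical piece as $S^n_+ \times \PinC_{n,+}$.

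For the lower hemisphere, I would repeat the construction based at the south pole $-e_{n+1}$ instead, and compose the resulting lift with a fixed element $\eta \in \PinC_{n+1}\setminus \SpinC_{n+1}$ (for instance a well chosen unit vector of $\reals^{n+1}$) implementing the antipodal identification on the fibre. The effect of $\eta$ is precisely that the trivialization takes values in the coset $\PinC_{n,-}$ rather than in $\SpinC_n$, yielding $S^n_- \times \PinC_{n,-} \to p^{-1}(S^n_-)$. On the equator $S^{n-1} \subset \reals^n$ a direct Clifford-algebra calculation comparing $\sigma_+(x)$ with $\sigma_-(x)$ yields $\sigma_+(x) = \sigma_-(x) \cdot ((-1)^n x)$; the element $(-1)^n x$, being a unit vector of $\reals^n$, represents $\PinC_{n,-}$, so the identification $(x,g) \mapsto (x,(-1)^n xg)$ transports one trivialization to the other. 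The identity $\PinC_{n,+} = \SpinC_n$ is simply the definition of the two components of $\PinC_n$.

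The main obstacle is book-keeping of signs: the factor $(-1)^n$ in the bundle projection inherited from the ``outer normal first'' boundary orientation, the $\pm$ ambiguity inherent in any lift from $SO$ to $\Spin$, and the Clifford-algebra signs arising whenever $e_{n+1}$ is commuted past elements of $\reals^n$, must combine so that the clutching function comes out exactly as $(-1)^n x$ rather than $\pm x$ with the wrong sign. Modulo these technicalities, the argument is a direct enhancement of the classical construction in \cite[\S 13]{MR0167985} describing the clutching of the tangent bundle of $S^n$, lifted along the double cover $\SpinC_n \to SO_n$.
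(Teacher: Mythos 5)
The paper's own ``proof'' of this lemma is a single sentence pointing at a variation of the argument in Atiyah--Bott--Shapiro \cite[\S 13]{MR0167985}, so you are supplying details the paper omits; your strategy --- explicit sections of $p\colon \SpinC_{n+1}\to S^n$ over each closed hemisphere and a direct computation of the overlap cocycle --- is exactly the kind of elaboration that citation invites, and the broad structure is right.

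However, two things in your write-up are not merely unfinished bookkeeping but wrong as stated. First, the equivariance identity you announce, $\sigma_+(\rho(k)x)=k\sigma_+(x)$, does not hold for a section of a principal bundle and cannot hold here. With $\sigma_+(x)=\cos(\theta/2)+\sin(\theta/2)\hat x e_{n+1}$ and $k\in\SpinC_n$, one computes (using that even elements of $\CliffordC_n$ commute with $e_{n+1}$ and that $k\hat x k^{-1}=\rho(k)\hat x$)
\begin{equation*}
k\,\sigma_+(x)\,k^{-1}=\cos(\theta/2)+\sin(\theta/2)\,(\rho(k)\hat x)\,e_{n+1}=\sigma_+(\rho(k)x),
\end{equation*}
i.e.\ the correct relation is $\sigma_+(\rho(k)x)=k\,\sigma_+(x)\,k^{-1}$, not $k\,\sigma_+(x)$. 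This is not a cosmetic difference: it forces the left $\SpinC_n$-action on the model $S^n_\pm\times\PinC_{n,\pm}$ to be the diagonal one $k\cdot(x,g)=(\rho(k)x,kg)$, which is precisely what makes the fibre of the associated bundle over the fixed north pole carry the nontrivial $\SpinC_n$-module structure (this is used in the very next paragraph of the paper, where the associated bundles $S^n_\pm\times W^\pm$ are formed). If your claimed identity were true, the induced action on the fibre of the trivialized bundle would be trivial, which contradicts the action by left multiplication on $p^{-1}(e_{n+1})$. Second, your construction of $\sigma_-$ by ``composing with a fixed element $\eta\in\PinC_{n+1}\setminus\SpinC_{n+1}$'' cannot produce a section of $p\colon\SpinC_{n+1}\to S^n$: multiplying by an odd element leaves $\SpinC_{n+1}$ altogether. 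What you want is a genuine $\SpinC_{n+1}$-valued section over $S^n_-$ (e.g.\ based at a fixed even lift of a rotation taking $e_{n+1}$ to its antipode) whose transition against $\sigma_+$ along the equator is then computed inside $\CliffordC_{n+1}$ and lands, after applying $p$ and untangling the $(-1)^n$ conventions, in the odd component $\PinC_{n,-}$. The sign-bookkeeping you flag is real, but the two issues above are structural and would have to be repaired before the sign analysis can even be set up correctly.
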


For every graded $\CliffordC_n$-module $W = W^+ \oplus W^-$ we have a natural isomorphism $\PinC_{n,-} \times_{\SpinC_n} W^+ \cong W^-$.
Hence the even part of the associated spinor bundle on $S^n$ is given by the analogue clutching construction applied to the bundles
\begin{equation*}
  S^n_\pm \times W^\pm \rightarrow S^n_\pm.
\end{equation*}
In particular if $W$ is the standard graded irreducible representation of $\CliffordC_{2k}$ this gives a description of the even part $\Sred{S^{2k}}^+$ of the reduced spinor bundle of an even-dimensional sphere $S^{2k}$. We will later be interested in its dual or, equivalently, its conjugate. It is given by the same clutching construction applied to the conjugate $\CliffordC_{2k}$-module $\overline{W}$, and from the action of the complex volume element we see that it is isomorphic to $W$ precisely if $k$ is even.

\bigskip

Let us now turn to computing equivariant indices for \emph{even-dimensional} spheres. It is clear that the index of its equivariant $\CliffordC_{2k}$-linear Dirac operator
\begin{equation*}
    \collapse_* [S^{2k}]
  = \collapse_* \partial [D^{2k+1}]
  = \partial \collapse_* [D^{2k+1}]
\end{equation*}
vanishes as we have factored over $R_{2k+1}(\SpinC_{2k+1})=0$ (using
naturality of the boundary map). {Recall that, for a compact Lie group $G$, $R(G)$ is
the complex representation ring, which is canonically isomorphic to
$K^0_G(*)$, and $R_n(G):=K^n_G(*)$.} This argument in fact only depends on the
Dirac bundle over the sphere being induced by the boundary construction from a
Dirac bundle over the ball. We conclude that the index still vanishes if we
consider instead the \emph{reduced} spinor bundle $\Sred{S^{2k}}$ twisted with
the pullback $E$ of a representation in $R(\SpinC_{2k+1})$ (which of course
extends over the ball). 

\bigskip

On the other hand, recall that for every closed even-dimensional $\spinC$-manifold $M$, Clifford multiplication induces isomorphisms of Dirac bundles $\Cliff^\complexs(M) \cong \End(\Sred{M}) \cong \Sred{M} \hat{\otimes} \Sreddual{M}$. If we identify $\Cliff^\complexs(M)$ with the complexified exterior bundle then an associated Dirac operator is given by the de Rham operator (cf. \cite[11.1.3]{MR1817560}). There is a canonical involution on $\Cliff^\complexs(M)$ induced by \emph{right} Clifford multiplication with $i^k E_1 \cdots E_{2k}$ where $(E_i)$ is any oriented local orthonormal frame; let us designate its positive eigenbundle by $\Cliff^\complexs_{\frac{1}{2}}(M)$. It is invariant under the de Rham operator, and the above maps restrict to an isomorphism
\begin{align*}
  \Cliff^\complexs_{\frac{1}{2}}(M) \cong \Sred{M} \otimes \Sreddualeven{M}.
\end{align*}
In particular, the above construction applies to the even-dimensional sphere
$M = S^{2k}$ and works equivariantly if we equip the exterior bundle with the
action induced by $\rho$. Since the de Rham operator is rotation-invariant we
can still use it as our Dirac operator. Its kernel consists precisely of the
harmonic forms, hence in view of the cohomology of $S^{2k}$ it is spanned by a
$0$-form and a $2k$-form (which are rotation-invariant and interchanged by the
involution). It follows that after restricting to the positive eigenbundle the
kernel is just the one-dimensional trivially-graded trivial representation. In
other words, 
\begin{equation*}
  \collapse_* [\Sred{S^{2k}} \otimes \Sreddualeven{S^{2k}}] = 1.
\end{equation*}
Expressing twisted indices as Kasparov products (cf. \cite[24.5.3]{MR1656031}) we have
\begin{equation}
  \label{eqn:indices}
  ([\Sreddualeven{S^{2k}}] - [E]) \otimes_{\red{C(S^{2k})}} [\Sred{S^{2k}}] = 1 \in R(\SpinC_{2k+1})
\end{equation}
for every pullback $E$ of a representation in $R(\SpinC_{2k+1})$.

\subsection{Topological Bott periodicity}
We will now construct equivariant Bott elements $b_{2k} \in K^0_G(\reals^{2k})$ where $G$ is a compact group acting \term{spinorly} on $\reals^{2k}$ (i.e. the action factors over a continuous homomorphism $G \rightarrow \SpinC_{2k}$). Let us identify $\reals^{2k}$ $G$-$\spinC$-structure-preservingly with an open subset of its one-point compactification $S^{2k}$ via stereographic projection from the south pole.
If we now use the split short exact sequence
\begin{equation*}
  \xymatrix
  {
    0 \ar[r] & K^0_G(\reals^{2k}) \ar[r]^{\incl_*} & K^0_G(S^{2k}) \ar[r] & \ar[l] K^0_G(*) = R(G) \ar[r] & 0
  }
\end{equation*}
to pull the south pole fiber of $F_0 := \Sreddualeven{S^{2k}}$ back to a bundle $F_0^\infty$ over the entire sphere then by exactness there is a unique preimage of $[F_0] - [F_0^\infty]$ which we will call the \term{Bott element} $b_{2k} \in K^0_G(\reals^{2k})$. Using equation $(\ref{eqn:indices})$ we get
\begin{equation*}
    b_{2k} \otimes_{C_0(\reals^{2k})} [\Sred{\reals^{2k}}]
  = b_{2k} \otimes_{C_0(\reals^{2k})} \incl^* [\Sred{S^{2k}}]
  = \incl_* b_{2k} \otimes_{C(S^{2k})} [\Sred{S^{2k}}]
  = 1.
\end{equation*}
The following version of Atiyah's rotation trick \cite{MR0228000} now allows us to establish that $b_{2k}$ is in fact a $KK$-equivalence:
\begin{lemma}
  Let $b \in K^0_G(\reals^n)$ and $D \in K_0^G(\reals^n)$ satisfy $b \otimes_{C_0(\reals^{n})} D = 1$.
  Then they are already $KK$-equivalences inverse to each other.
\end{lemma}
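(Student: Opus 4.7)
The strategy is Atiyah's rotation trick. Set $\gamma := D \otimes_\complexs b \in KK^G(C_0(\reals^n), C_0(\reals^n))$; the goal is to show $\gamma = 1_{C_0(\reals^n)}$, which then makes $b$ and $D$ into mutually inverse $KK^G$-equivalences.

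First I would express both $\gamma$ and $1_{C_0(\reals^n)}$ as Kasparov products factoring through $C_0(\reals^{2n}) = C_0(\reals^n) \otimes C_0(\reals^n)$. Writing $b_i \in KK^G(C_0(\reals^n), C_0(\reals^{2n}))$ and $D_i \in KK^G(C_0(\reals^{2n}), C_0(\reals^n))$, for $i = 1, 2$, for the external products of $b$ and $D$ with the unit on the first and second tensor factor respectively, the standard interchange law for external and internal Kasparov products yields
$$\gamma = b_1 \otimes_{C_0(\reals^{2n})} D_2 \qquad\text{and}\qquad 1_{C_0(\reals^n)} = b_2 \otimes_{C_0(\reals^{2n})} D_2,$$
the second equality arising from tensoring the hypothesis $b \otimes_{C_0(\reals^n)} D = 1_\complexs$ externally from the left with $1_{C_0(\reals^n)}$. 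Comparing these two factorisations, the proof reduces to showing $b_1 = b_2$.

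This last equality is the heart of the argument and is where the rotation trick enters. Since the $G$-action on $\reals^n$ is spinorial, the diagonal action on $\reals^{2n}$ factors through the block-diagonal inclusion $\SpinC(n) \into \SpinC(2n)$, and the one-parameter family
$$R_\theta(x, y) = \bigl(\cos\theta \cdot x + \sin\theta \cdot y,\; -\sin\theta \cdot x + \cos\theta \cdot y\bigr), \qquad \theta \in [0, \pi/2],$$
lies in the centraliser of this diagonal $\SpinC(n)$ inside $SO(2n)$. Hence $R_\theta$ provides a smooth $G$-equivariant, $\spinC$-preserving homotopy from the identity at $\theta = 0$ to the map $J\colon (x, y) \mapsto (y, -x)$ at $\theta = \pi/2$. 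Because $b_1$ and $b_2$ are pullbacks of the same class $b$ along the two projections $p_1, p_2 \colon \reals^{2n} \to \reals^n$ and because $p_1 \circ J = p_2$ (the sign in the second coordinate of $J$ is discarded by $p_1$), we obtain $b_1 \otimes_{C_0(\reals^{2n})} [J^*] = b_2$. Homotopy invariance of $KK^G$ gives $[J^*] = [\id_{C_0(\reals^{2n})}]$, whence $b_1 = b_2$, and therefore $\gamma = 1_{C_0(\reals^n)}$.

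The main obstacle is this rotation step, specifically verifying that the family $R_\theta$ genuinely lifts to a $G$-equivariant path of $\spinC$-preserving automorphisms of $\reals^{2n}$. The spinoriality of the $G$-action is what makes this work: the diagonal $G$ lands inside $\SpinC(n) \subset \SpinC(2n)$, and since each $R_\theta \in SO(2n)$ centralises this subgroup and varies continuously in $\theta$, a coherent continuous lift of the family to $\SpinC(2n)$ commuting with the diagonal $G$-action exists by path-lifting in the double cover. The steps involving the interchange law and the first factorisation are then routine formal manipulations in $KK$-theory.
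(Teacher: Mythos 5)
Your strategy — factoring $\gamma = D\otimes_\complexs b$ through $C_0(\reals^{2n})$ and using Atiyah's rotation — is exactly the route the paper takes, and your identification of the rotation family $R_\theta$ lying in the centraliser of the diagonal $\SpinC_n$ is correct. However, there is a genuine gap in the step ``$b_1 \otimes_{C_0(\reals^{2n})} [J^*] = b_2$.'' You justify it by viewing the $b_i$ as pullbacks $p_i^*b$ and observing that $p_1\circ J = p_2$ discards the sign in the second coordinate. But the $b_i$ you need for the factorisations $\gamma = b_1\otimes D_2$ and $1 = b_2\otimes D_2$ are the classes $b\boxtimes 1_{C_0(\reals^n)}$ and $1_{C_0(\reals^n)}\boxtimes b$ in $KK^G(C_0(\reals^n), C_0(\reals^{2n}))$ (not the pullbacks $p_i^*b \in K^0_G(\reals^{2n})$, which are elements of a different group), and for those the ``discarded'' sign is not discarded at all — it migrates to the Bott-class part of the cycle. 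Concretely, $J$ is the flip $\sigma$ composed with $\nu\colon (x,y)\mapsto(-x,y)$, and a careful computation gives $b_1 \otimes [J^*] = \Theta\boxtimes b$, where $\Theta=[\nu_1^*]\in KK^G(C_0(\reals^n),C_0(\reals^n))$ is the class of $f\mapsto f(-\cdot)$. Since $[J^*]=1$ by your (correct) homotopy, this yields $\gamma = \Theta$ rather than $\gamma = 1$.

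This is precisely what the paper obtains: its computation ends at $D\otimes_\complexs b = \Theta$, and a \emph{separate} observation is needed to conclude $\Theta = 1$. That last step is cheap once the factorisation is in place — for instance, since $b\otimes D=1$, one has $\Theta\otimes\Theta = (D\otimes b)\otimes(D\otimes b) = D\otimes 1\otimes b = D\otimes b = \Theta$, and as $\Theta$ is a $KK$-automorphism this forces $\Theta = 1$; alternatively, having an invertible $\Theta$ with $D\otimes b = \Theta$ and $b\otimes D = 1$ already shows $b$ has a two-sided $KK$-inverse, after which $\Theta = b^{-1}\otimes b = 1$. Your proof as written silently assumes $\Theta=1$ and so is incomplete; adding either of these closing observations (and replacing the ``pullback'' justification by the honest statement $b_1\otimes[J^*]=\Theta\boxtimes b$) would make it agree with the paper's argument.
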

\begin{proof}
\green{Let, in the following,
  $\tensor$ denote the external Kasparov product, and $\tensor_A$
  the composition Kasparov product.} \green{Recall first that for
  $G$-$C^*$-algebras $A,A',B,B'$ and $z\in KK^G(A,A')$, $z'\in KK(B,B')$ we have the
  following commutativity {of the exterior Kasparov product:
  \begin{equation}\label{eq:tensoridentity}
 \tau_{B}(z)\tensor_{A'\tensor B} \tau_{A'}( z') =
  \tau_{A}(z')\tensor_{A\tensor B'} \tau_{B'}(z) \in KK^G(A\tensor B,A'\tensor B').
  \end{equation}
(where for   $G$-$C^*$-algebras $A,B$ and $D$, $\tau_D\colon KK^G(A,B)\to KK^G(A\tensor
D,B\tensor D)$ is external tensor product with $\kk{1_D}$).}}

  As the rotation $(x,y) \mapsto (y,-x)$ is $G$-equivariantly homotopic to the
  identity we get (the third identity in)
  \begin{equation*}
    \begin{split}
      D \otimes_{\complexs} b & =\green{\left(
        D\tensor \kk{1_\complexs}\right) \tensor_\complexs
      \left(\kk{1_\complexs} \tensor b \right)} \\
     &\green{\stackrel{~\eqref{eq:tensoridentity}}{=}}\green{
  \left(\kk{1_{C_0(\reals^n)}} \otimes b\right) \otimes_{C_0(\reals^{2n})} \left(D
    \otimes \kk{1_{C_0(\reals^{n})}}\right)}\\
    & = \green{ \left(b \otimes \Theta\right) \otimes_{C_0(\reals^{2n})}
      \left(D \otimes \kk{1_{C_0(\reals^{n})}}\right)}\\
     & =\green{\left(b \otimes_{C_0(\reals^{n})} D\right) \otimes \left(\Theta \otimes_{C_0(\reals^{n})}
 \kk{1_{C_0(\reals^{n})}}\right)}\\
 & =  \green{\kk{1_\complexs} \otimes \Theta=\Theta}
%
%
    \end{split}
  \end{equation*}
  where $\Theta$ is the $KK$-involution corresponding to $x \mapsto -x$. 

  It follows that $b$ also has a left $KK$-inverse (and, in fact, $\Theta = 1$).
\end{proof}

\begin{corollary}[Topological Bott periodicity]
  Let $G$ be a compact group acting spinorly on $\reals^{2k}$.
  Then the associated Bott element $b_{2k} \in K^0_G(\reals^{2k})$ and
  the reduced fundamental class $[\Sred{\reals^{2k}}] \in K_0^G(\reals^{2k})$
  are $KK$-equivalences inverse to each other.
\end{corollary}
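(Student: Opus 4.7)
The proof is essentially a direct application of the two preceding pieces of work, so my plan has three short steps. First, I would simply observe that the identity
\begin{equation*}
  b_{2k} \otimes_{C_0(\reals^{2k})} [\Sred{\reals^{2k}}] = 1 \in R(G)
\end{equation*}
has already been verified in the paragraph immediately above, using the definition of $b_{2k}$ via the split short exact sequence (as the unique lift of $[F_0] - [F_0^\infty]$), the identification $[\Sred{\reals^{2k}}] = \incl^* [\Sred{S^{2k}}]$, and the index computation~(\ref{eqn:indices}).

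Second, I would apply the preceding Atiyah rotation-trick lemma with $n = 2k$, $b = b_{2k}$, and $D = [\Sred{\reals^{2k}}]$. Its conclusion is exactly that $b_{2k}$ and $[\Sred{\reals^{2k}}]$ are mutually inverse $KK^G$-equivalences, which is the content of the corollary. As a side benefit, the rotation-trick lemma also gives $\Theta = 1$, i.e.\ that the $KK^G$-involution induced by $x \mapsto -x$ on $\reals^{2k}$ is trivial, which is reassuring consistency data.

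The only point that warrants checking is that the rotation-trick lemma genuinely applies in the present $G$-equivariant setting: its proof requires that the rotation $(x,y)\mapsto(y,-x)$ on $\reals^{2k}\oplus\reals^{2k}$ be $G$-equivariantly homotopic to the identity. Since $G$ acts diagonally and \emph{identically} on the two copies of $\reals^{2k}$ via the same spinorial action, this rotation commutes with $G$ and is connected to the identity through the standard $SO(2)$-path of rotations, each of which is $G$-equivariant. Thus there is no real obstacle; the corollary is a formal consequence of the lemma combined with the index computation, and the whole substantive content has already been absorbed into the two preceding arguments (the geometric computation of $\collapse_*[\Sred{S^{2k}}\otimes\Sreddualeven{S^{2k}}]=1$ and the rotation trick itself).
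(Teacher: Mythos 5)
Your proposal is correct and matches the paper's (implicit) argument exactly: the corollary is stated without a separate proof precisely because it is the immediate conjunction of the displayed identity $b_{2k}\otimes_{C_0(\reals^{2k})}[\Sred{\reals^{2k}}]=1$ and the rotation-trick lemma. Your extra sanity check that the $SO(2)$ rotation path is $G$-equivariant (since $G$ acts diagonally by the same spinorial action on both factors) is the right thing to verify and is precisely what the lemma's proof uses when it asserts the rotation is ``$G$-equivariantly homotopic to the identity.''
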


\subsection{Analytical Bott periodicity}
We will now derive an analytic cycle for the Bott element. Clearly, the
difference bundle $[F_0] - [F_0^\infty]$ is represented by the 
\green{$KK_G(\complexs,C(S^{2k}))$-cycle}
\begin{equation}
  \label{eqn:topo-bott}
  (\Gamma(F_0) \oplus \Gamma(F_0^\infty)^{\op}, \mul_\complexs, 0).
\end{equation}
Consider the description of $F_0$ from the discussion after Lemma
\ref{lem:clutching}. Evidently, $F_0^\infty$ can be described by a similar
clutching construction given by using the southern hemisphere representation
over both hemispheres and glueing using the identity. We can thus define an
operator $T'$ acting on even (odd) sections by pointwise Clifford
multiplication with plus (minus) the first $n$ coordinates of the respective
base point on the upper hemisphere and by the identity on the lower
hemisphere, and the linear path gives a homotopy to the cycle
\begin{align*}
  (\Gamma(F_0) \oplus \Gamma(F_0^\infty)^{\op}, \mul_\complexs, T').
\end{align*}
Now we can restrict to the open upper hemisphere via the homotopy given by the Hilbert $C([0,1], C(S^{2k}))$-module
\begin{equation*}
  \{
    f \in C([0,1], \Gamma(F_0) \oplus \Gamma(F_0^\infty)^{\op}) :
    f(0) \in \Gamma_0(F_0|_{\mathring{S}^{2k}_+}) \oplus \Gamma_0(F_0^\infty|_{\mathring{S}^{2k}_+})^{\op}
  \}.
\end{equation*}
\green{Note that all conditions on a Kasparov triple are satisfied because $T'$
  is an isomorphism on $\mathring{S}^{2k}_-$.}

Identifying $\reals^{2k}$ with the open upper hemisphere via $x \mapsto (x,1)/\sqrt{1 + ||x||^2}$ (which is equivariantly homotopic to our previous identification) we conclude that the Bott element is given by the cycle
\begin{equation*}
  b_{2k} = [C_0(\reals^{2k}, \overline{W}), \mul_\complexs, T] \in K^0_G(\reals^{2k})
\end{equation*}
with the obvious Hilbert $C_0(\reals^{2k})$-module structure and where
$T$ acts by Clifford multiplication with $\pm x/\sqrt{1 + ||x||^2}$
\red{on  the conjugate $\overline{W}$ of the standard graded
  irreducible representation of $\CliffordC_{2k}$} .

Chasing the relevant definitions in \cite[Sections 2 and 5]{kasparov81} one finds that the formal periodicity isomorphism $KK^G_0 \cong KK^G_{-2k}$ amounts to tensoring with the standard graded irreducible $\CliffordC_{-2k}$-module. Thus the image of $b_{2k}$, after applying a unitary equivalence, is
\begin{align*}
  \beta_{2k} := [C_0(\reals^{2k}, \CliffordC_{-2k}), \mul_\complexs, T_{2k}] \in K^{2k}_G(\reals^{2k})
\end{align*}
where $T_{2k}$ acts by Clifford multiplication with $x/\sqrt{1 + ||x||^2}$. This is the classical cycle of the Bott element due to Kasparov
\cite[Paragraph 5]{kasparov81}.
As the image of $[\Sred{\reals^{2k}}]$ under formal periodicity of $K$-homology is of course the fundamental class of $\reals^{2k}$ we have proved the following result for $n = 2k$.

\begin{proposition}[Analytical Bott periodicity]
  Let $G$ be a compact group acting spinorly on $\reals^n$.  
  Then the Bott element
  \begin{equation*}
    \beta_n := [C_0(\reals^n, \CliffordC_{-n}), \mul_\complexs, T_n] \in K^n_G(\reals^n)
  \end{equation*}
  (where $T_n$ is the Clifford multiplication operator defined as above)
  and the fundamental class $[\reals^n] \in K_n^G(\reals^n)$ are $KK$-equivalences inverse to each other.
\end{proposition}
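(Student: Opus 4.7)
The preceding argument establishes the result for even $n = 2k$, so the remaining task is the odd case $n = 2k+1$. The plan is to reduce to the even case via external Kasparov products: a spinorial action of $G$ on $\reals^n$ extends to a spinorial action on $\reals^{n+1} = \reals^n \times \reals$ with trivial action on the extra factor, and both the Bott element and the fundamental class are multiplicative under such products.

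First I would verify the multiplicativity statements: if $\reals^m$ and $\reals^n$ carry spinorial actions of compact groups $G_1$ and $G_2$, then $\beta_{m+n} = \beta_m \hat{\otimes} \beta_n$ in $K^{m+n}_{G_1 \times G_2}(\reals^{m+n})$ and $[\reals^{m+n}] = [\reals^m] \hat{\otimes} [\reals^n]$ in $K_{m+n}^{G_1 \times G_2}(\reals^{m+n})$. The identity for fundamental classes is standard and follows from the multiplicativity of the Dirac operator on a product of $\spinC$-manifolds. The identity for Bott elements requires a linear operator homotopy comparing the radial Clifford multiplication operator $T_{m+n}$ with the graded tensor product $T_m \hat{\otimes} 1 + 1 \hat{\otimes} T_n$ under the identification $\CliffordC_{-(m+n)} \iso \CliffordC_{-m} \hat{\otimes} \CliffordC_{-n}$, checked via the usual operator-norm criteria to preserve Kasparov-admissibility throughout.

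Applying multiplicativity to $\reals^n \times \reals$ (with trivial action on the second factor) and invoking the already-proved even case in dimension $n+1 = 2k+2$, commutativity of the external Kasparov product (equation \eqref{eq:tensoridentity}) yields
\begin{equation*}
  (\beta_n \otimes_{C_0(\reals^n)} [\reals^n]) \cdot (\beta_1 \otimes_{C_0(\reals)} [\reals]) = \beta_{n+1} \otimes_{C_0(\reals^{n+1})} [\reals^{n+1}] = 1 \in R(G).
\end{equation*}
The second factor lies in $R(\{e\}) \iso \integers$ and equals $+1$ by the classical Bott--Toeplitz computation on $\reals$, so the first factor equals $1$ as well. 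The rotation-trick lemma proved above then upgrades this identity to a two-sided $KK$-inverse, finishing the proof.

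The main obstacle will be the rigorous verification of the multiplicativity of $\beta$, which requires an explicit operator homotopy of Kasparov cycles compatible with the Clifford algebra identification, together with a direct check of the $n = 1$ base case in the graded-$C^*$-framework of \cite{kasparov81}. Once multiplicativity is in hand, the passage from even to all $n$ is essentially formal.
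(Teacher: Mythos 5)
Your argument takes a genuinely different route from the paper's. The paper proves the proposition only in the even case $n=2k$ (by analyzing the reduced spinor bundle of $S^{2k}$ and computing an equivariant index), and then settles the general case with a citation to Kasparov's analytic argument in \cite[5.7]{kasparov81}; it also remarks, without using it in the proof, that $\beta_n=(\beta_1)^n$ follows from the product formula for fundamental classes. You, by contrast, derive the odd case from the already-proved even case via multiplicativity of Bott elements and fundamental classes under exterior products, exactly the kind of argument the paper gestures at but does not carry out. This buys a more self-contained proof, at the cost of having to verify the multiplicativity of $\beta$ in the graded-$KK$ framework. Two technical points deserve more care before the proof is complete. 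First, the rotation-trick lemma as stated in the paper is for $b\in K^0_G(\reals^n)$ and $D\in K_0^G(\reals^n)$; for odd $n$ there is no reduced Bott element in degree $0$, so you must either re-prove the lemma for pairs in $K^n_G \times K_n^G$ or argue explicitly that absorbing $\CliffordC_{-n}$ into the coefficient algebra reduces to the degree-$0$ case---this is routine but not free. Second, the factorization $\beta_{n+1}\otimes[\reals^{n+1}]=(\beta_n\otimes[\reals^n])\cdot(\beta_1\otimes[\reals])$ requires matching the identification $\CliffordC_{-(n+1)}\iso\CliffordC_{-n}\hat\otimes\CliffordC_{-1}$ on the Bott side with the one implicit in the product formula for fundamental classes on the $K$-homology side; if this is done consistently (as you indicate), the $\integers/2$-graded signs cancel because the Clifford degrees are carried by the coefficient algebras rather than the Kasparov cycles, but you should state this explicitly rather than leaving it as ``the usual operator-norm criteria.'' Finally, the base-case product $\beta_1\otimes_{C_0(\reals)}[\reals]$ does live in $R(G)$, not $R(\{e\})$, since the cycles are still $G$-equivariant (with trivial action); what you mean is that it lies in the image of $\integers=R(\{e\})\hookrightarrow R(G)$ and equals the non-equivariant value $1$. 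With those clarifications your argument is correct and complements the paper's citation-based treatment.
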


A purely analytic argument can be used to show that it holds in arbitrary dimensions (see e.g. \cite[5.7]{kasparov81}).
Note that we have $\beta_n = (\beta_1)^n$ (for appropriate group actions); this follows readily from the product formula for fundamental classes.

\subsection{Thom isomorphism}

Let $G$ be a compact topological group and $\pi_W\colon W \rightarrow X$ be a $G$-$\spinC$-vector bundle of dimension $n$ over a compact $G$-space $X$ with principal $\SpinC_n$-bundle $P$.
Then $P/{\SpinC_n} \cong X$ and $P \times_{\SpinC_n} \reals^n \cong W$ and Kasparov's induction machinery from \cite[3.4]{kasparov88} is applicable.
Let us define the \term{Thom element} $\beta_W$ as the image of $\beta_n$ under the composition
\begin{equation*}
  \begin{split}
   &K_{G \times \SpinC_n}^n(\reals^n) =
    RKK^{G \times \SpinC_n}_{-n}(*; \complexs, C_0(\reals^n))\\
  \stackrel{\collapse^*}{\longrightarrow}
    &RKK^{G \times \SpinC_n}_{-n}(P; \complexs, C_0(\reals^n))
  = \mathcal{R}KK^{G \times \SpinC_n}_{-n}(P; C_0(P), C_0(P \times \reals^n))\\
  \stackrel{\lambda^{\SpinC_n}}{\longrightarrow}
    &\mathcal{R}KK^G_{-n}(X; C(X), C_0(W))
  \longrightarrow
    KK^G_{-n}(C(X), C_0(W))
  \end{split}
\end{equation*}
Naturality of these operations shows that $\beta_W$ is a $KK$-equivalence; its inverse is given by the image of the fundamental class of $\reals^n$ under the analogous composition.
Chasing definitions, we find that
\begin{equation*}
  \beta_W = [C_0(P \times \reals^n, \CliffordC_{-n})^{\SpinC_n}, \mul_{C(P)^{\SpinC_n}}, T'_n]
\end{equation*}
where $T'_n$ is the equivariant operator acting by Clifford multiplication with the second coordinate. Denoting the Connes-Skandalis spinor bundle $P \times_{\SpinC_n} \CliffordC_{-n}$ of $W$ by $\Sfull{W}^\text{CS}$ (cf. \cite{MR775126}) we get the following result:

\begin{proposition}[Analytical Thom isomorphism]
  Let $G$ be a compact group and $\pi_W\colon W \rightarrow X$ a $G$-$\spinC$-vector bundle over a compact space $X$. Then the Thom element
  \begin{equation*}
    \beta_W = [\Gamma(\pi_W^*(\Sfull{W}^\text{CS})), \mul_{C(X)}, T_W] \in KK_{-n}(C(X), C_0(W))
  \end{equation*}
  (where $T_W$ is the operator of pointwise Clifford multiplication with
  \green{the base point in $W$}) is a $KK$-equivalence.
\end{proposition}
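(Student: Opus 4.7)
The plan is to read off the explicit cycle representation for $\beta_W$ directly from the construction given just above the statement, and then invoke naturality of Kasparov's induction machinery to conclude that it is a $KK$-equivalence. The starting point is the cycle
\begin{equation*}
\beta_n = [C_0(\reals^n,\CliffordC_{-n}),\mul_\complexs,T_n]\in K^n_{G\times \SpinC_n}(\reals^n),
\end{equation*}
which by the Analytical Bott periodicity proposition is a $KK$-equivalence with inverse the fundamental class of $\reals^n$. The target cycle $\beta_W$ is obtained by applying to $\beta_n$ the composition $\collapse^*$ followed by Kasparov descent $\lambda^{\SpinC_n}$ followed by the forgetful map to $KK^G_{-n}$. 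Each of these is a natural map that sends $KK$-equivalences to $KK$-equivalences, so once the explicit description of $\beta_W$ is verified, $KK$-invertibility is automatic with inverse obtained by pushing $[\reals^n]$ through the same chain.

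First I would trace the pullback $\collapse^*$: it reinterprets the underlying $C_0(\reals^n)$-module as a $C_0(P)\hat\otimes C_0(\reals^n)=C_0(P\times\reals^n)$-module via the constant $C(P)$-structure, leaving the operator $T_n$ (acting fiberwise by Clifford multiplication with the position vector) and the left representation unchanged up to this extension of scalars. Next I would carry out the descent step: passing to $\SpinC_n$-fixed points in $C_0(P\times\reals^n,\CliffordC_{-n})$, with $\SpinC_n$ acting diagonally (by right multiplication on $P$, by rotation on $\reals^n$, and by twisted adjoint action on $\CliffordC_{-n}$), yields exactly the continuous sections vanishing at infinity of the associated bundle $P\times_{\SpinC_n}\CliffordC_{-n}$, which is the pullback to $W$ of the Connes--Skandalis spinor bundle $\Sfull{W}^{\text{CS}}$. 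The equivariance of $T_n'$ ensures that it descends to pointwise Clifford multiplication with the basepoint in $W$, and the $C(P)^{\SpinC_n}=C(X)$-action matches the $C(X)$-module structure on $\Gamma_0(\pi_W^*\Sfull{W}^{\text{CS}})$. Forgetting the additional representable $C(X)$-structure yields exactly the cycle stated in the proposition.

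The main obstacle is the bookkeeping in the descent step: one must verify that the diagonal $\SpinC_n$-action on $C_0(P\times\reals^n,\CliffordC_{-n})$ that is implicit in Kasparov's $\lambda^{\SpinC_n}$ construction agrees with the action whose fixed-point algebra is $\Gamma_0(\pi_W^*\Sfull{W}^{\text{CS}})$, and that the operator $T_n$ (which acts only on the $\reals^n$-factor) descends to $T_W$ (which acts by multiplication with the basepoint in $W$). The key is the canonical identification $P\times_{\SpinC_n}\reals^n\cong W$, under which the position coordinate of the fiber in $\reals^n$ is sent precisely to the basepoint in $W$, so that the ($\SpinC_n$-invariant) Clifford multiplication operator on $P\times\reals^n$ descends to the claimed $T_W$. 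Once this identification is made, the proposition reduces to naturality together with the previous Analytical Bott periodicity result.
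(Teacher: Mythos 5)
Your proposal follows the paper's own proof essentially verbatim: the paper defines $\beta_W$ as the image of $\beta_n$ under the composition $\collapse^*$, Kasparov descent $\lambda^{\SpinC_n}$, and the forgetful map, then invokes naturality of these operations to conclude that $\beta_W$ (with inverse the image of $[\reals^n]$ under the same chain) is a $KK$-equivalence, and finally "chases definitions" to arrive at the explicit cycle. You have merely spelled out the definition-chasing step in somewhat more detail (identifying $C_0(P\times\reals^n,\CliffordC_{-n})^{\SpinC_n}$ with $\Gamma_0(\pi_W^*\Sfull{W}^{\text{CS}})$ and the descended Clifford operator with $T_W$), which is consistent with what the paper asserts.
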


Let us now consider the even case $n = 2k$. Then we can perform the same construction with the reduced Bott element $b_{2k}$ and the resulting Thom element $b_W$ is just the image of $\beta_W$ under Clifford periodicity. Clearly, $b_W$ is given by the obvious cycle using the reduced spinor bundle $\Sred{W}^\text{CS}$. If on the other hand we start with the topological description (\ref{eqn:topo-bott}) of the Bott element, we find that
\begin{equation*}
  b_W = [\Gamma(P \times_{\SpinC_{2k}} F_0) \oplus \Gamma(P \times_{\SpinC_{2k}} F_0^\infty)^{\op}, \mul_{C(X)}, 0]
\end{equation*}
where $F := P \times_{\SpinC_{2k}} F_0$ is interpreted as a vector bundle over the sphere bundle $Z \cong P \times_{\SpinC_{2k}} S^{2k}$.
We have seen before that $F_0$ can be described using an \emph{equivariant} clutching construction. Consequently, the associated bundle $F$ also arises from a clutching construction and it is easy to see that it is precisely the one used for vector bundle modification:

\begin{proposition}[Topological Thom isomorphism]
  \label{prop:thom-topo}
  The reduced Thom element has the representation
  \begin{equation*}
    b_W = [\Gamma(F) \oplus \Gamma(F^\infty)^{\op}, \mul_{C(X)}, 0] \in KK_0(C(X), C_0(W))
  \end{equation*}
  where $F$ is the bundle over the sphere bundle $Z$ from Definition \ref{def:modification} and where $F^\infty$ is the pullback of the north pole restriction of $F$ back to $Z$.
\end{proposition}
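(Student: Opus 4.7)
The plan is to chase the definition of the reduced Thom element $b_W$ through the induction machinery of Kasparov. By construction, $b_W$ is the image of the reduced Bott element $b_{2k}\in K^0_G(\reals^{2k})$ under the associated-bundle construction coming from the principal $\SpinC_{2k}$-bundle $P\to X$. Hence I would first record the topological Kasparov representative of $b_{2k}$: by exactness in the split sequence used to define it, $b_{2k}$ is the class corresponding to the difference bundle $[F_0]-[F_0^\infty]$ on $S^{2k}$, which is represented by the cycle in~\eqref{eqn:topo-bott}, namely $(\Gamma(F_0)\oplus\Gamma(F_0^\infty)^{\op},\mul_\complexs,0)$.

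Next I would apply the induction from $\SpinC_{2k}$ directly to this representative. Since the cycle is equivariant for $G\times\SpinC_{2k}$ and has a zero operator, the Kasparov induction simply replaces the spaces of sections over $S^{2k}$ by sections of the associated bundles over $Z = P\times_{\SpinC_{2k}} S^{2k}$, preserving the pointwise multiplication by $C(X) = C(P/\SpinC_{2k})$. This gives
\begin{equation*}
b_W = [\Gamma(P\times_{\SpinC_{2k}} F_0)\oplus \Gamma(P\times_{\SpinC_{2k}} F_0^\infty)^{\op},\mul_{C(X)},0],
\end{equation*}
so the proposition reduces to identifying $P\times_{\SpinC_{2k}} F_0$ with the clutching bundle $F$ of Definition~\ref{def:modification} and $P\times_{\SpinC_{2k}} F_0^\infty$ with $F^\infty$.

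For the first identification I would invoke the description of $F_0 = \Sreddualeven{S^{2k}}$ in terms of equivariant clutching from Lemma~\ref{lem:clutching} and the discussion immediately following it: $F_0$ is obtained by gluing the $\SpinC_{2k}$-equivariant bundles $S^{2k}_\pm\times \overline{W}^{\pm}$ along the equator via Clifford multiplication. The associated-bundle functor $P\times_{\SpinC_{2k}}(-)$ is compatible with equivariant clutching, so the resulting bundle over $Z$ is obtained by gluing the bundles $\pi^*S^*_{W,\pm}$ (the pullbacks of the half-spinor bundles of $W$ to the respective hemispheres of $Z$) along the unit sphere bundle of $W$ by fiberwise Clifford multiplication with the base point. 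This is exactly the bundle $F$ of Definition~\ref{def:modification}. Similarly, $F_0^\infty$ is by construction the pullback under $\collapse\colon S^{2k}\to *$ of the fiber of $F_0$ at the distinguished pole; applying $P\times_{\SpinC_{2k}}(-)$ and unwinding the definition of the pull-back of the restriction to the north-pole section of $Z$ gives $F^\infty$ as described in the statement.

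The only subtle point, and the one I would treat most carefully, is bookkeeping with conventions: the identification of hemispheres and the choice of which half-spinor module sits on which side (controlled by the $(-1)^n$ factor in Lemma~\ref{lem:clutching} and the ``outer normal first'' boundary orientation) has to match the north-pole/south-pole convention used in Definition~\ref{def:modification}. Once this is checked, naturality of the clutching construction under the associated-bundle functor concludes the proof.
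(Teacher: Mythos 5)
Your proposal follows essentially the same route as the paper: pull back the topological cycle~\eqref{eqn:topo-bott} for $b_{2k}$ through Kasparov's induction, observe that the associated-bundle functor commutes with equivariant clutching, and identify $P\times_{\SpinC_{2k}}F_0$ with the clutching bundle $F$ of Definition~\ref{def:modification}. Your extra remark on the orientation/hemisphere bookkeeping is a sensible addition to what the paper condenses into ``it is easy to see.''
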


\section{Analytic Poincar\'e duality and Gysin maps}

\subsection{Construction of the Gysin element for closed manifolds}
\label{sec:Gysin_for_closed}

Let $f\colon M \rightarrow N$ be a smooth $G$-map between two compact $G$-$\spinC$-manifolds without boundary. We describe the construction of the (functorial) Gysin element $f! \in KK^G_{\dim M - \dim N}(C(M), C(N))$ which implement the Gysin maps $f!\colon K^G_*(M) \rightarrow K^G_{*+\dim N-\dim M}(N)$.

By functoriality and since every smooth $G$-map $f\colon  M \rightarrow N$ between compact $G$-$\spinC$-manifolds can be written as the composition of the embedding $M \rightarrow M \times N, m \mapsto (x,f(x))$ with the canonical projection $\pi_2\colon  M \times N \rightarrow N$ it suffices to describe the Gysin element associated to an equivariant embedding and to $\pi_2$.

The Gysin element of the projection $\pi_2$ is just the element $(\pi_2)! = \tau_{C(N)} [M]$ obtained by tensoring the fundamental class of $M$ with $C(N)$.
If $f\colon  M \rightarrow N$ is an equivariant embedding of compact $G$-$\spinC$-manifolds then its normal bundle $\nu_M$ is canonically $G$-$\spinC$ and, after fixing a $G$-invariant metric on $N$, can be considered as a $G$-invariant open tubular neighborhood of $N$.
The Gysin element of $f$ is then $f! = \beta_{\nu_M} \otimes_{C_0(\nu_M)} \kk{\theta_M}$ where $\theta_M$ is the equivariant inclusion $C_0(\nu_M) \subseteq C(N)$.

\subsection{Gysin and Poincar\'e duality}
\label{sec:Gysin_and_PD}

\begin{proof}[Proof of Lemma \ref{lem-gysin}, case $\boundary
  M=\emptyset=\boundary N$.]
Let us denote by $[f]$ the element of $KK^G_*(C(N),C(M))$
corresponding  
to the  morphism $C(N)\to C(M); h \mapsto h\circ f$. Then the  commutativity
of the diagram ammouts to prove that 
\begin{equation}\label{equ-gysin} \iota_N(x\otimes
  f!)=[f]\otimes\iota_M(x)\otimes f! 
\end{equation}
for all $x$ in $K_G^*(M)$. Namely, using this equality,
we have 
\begin{equation*}
\PD_N(x\otimes f!)=\iota_N(x\otimes f!)\otimes [N]
=[f]\otimes\iota_M(x)\otimes f!\otimes[N].
\end{equation*}
Since  $[N]$ is the Gysin element corresponding to \green{the map} $N\to\{*\}$,
we \green{get that} $f!\otimes[N]=[M]$ and hence that
\begin{equation*}
\PD_N(x\otimes f!)
=[f]\otimes\iota_M(x)\otimes [M]
= f_*(\PD_M(x)).
\end{equation*}
Let us now prove Equation \ref{equ-gysin}. Since $f$ can be written as the composition of
 an embedding and of the projection $\pi_2\colon M\times N\to N$, it
 is
 enough  by using the
functoriality in K-homology and the composition rule for Gysin elements
to check this for an embedding and for $\pi_2$.

\green{We start with $\pi_2$. Fix $x\in K_G^*(M\times N)$.
  Recall that {$\pi_2!=\tau_{C(N)}([M])$ and
    $[\pi_2]=\tau_{C(N)}([p])$, for $p\colon M\to\{*\}$ }, and that we can write}
  {\begin{equation*}
\iota_{M\times N}(x)=\tau_{C(M\times N)}(x) \tensor \mu_{M\times N},
\end{equation*}}
  where  $\mu_{M\times
  N}\colon C(M\times N)\tensor C( M\times N)\to C(M\times N)$ is the
  multiplication. Then, \green{using also \eqref{eq:tensoridentity}}, {
  \begin{eqnarray*}
      [\pi_2]\tensor\iota_{M\times N}(x)\tensor \pi_2! & =& \tau_{C(N)}[p]\tensor\tau_{C(M\times N)}(x)
      \tensor [\mu_{M\times N}]\tensor \pi_2! \\
 & = &\tau_{C(N)}\left([p]\tensor\tau_{C(M)}(x)\right)
      \tensor [\mu_{M\times N}]\tensor \pi_2! \\
 & =& \tau_{C(N)}\left(x\tensor \tau_{C(M\times N)}[p]\right)
\tensor [\mu_{M\times N}]\tensor \pi_2! \\
& =& \tau_{C(N)}(x)\tensor \tau_{C(N\times M\times N)}[p]
\tensor [\mu_{M\times N}]\tensor\pi_2! \\
& =& \tau_{C(N)}(x)\tensor \tau_{C(M)}([\mu_N]) \tensor\tau_{C(N)}([M]) \\
& =& \tau_{C(N)}(x)\tensor \tau_{C(N\times N)}([M])\tensor [\mu_N] \\
& =& \tau_{C(N)}(x\tensor \tau_{C(N)}([M]))\tensor [\mu_N] \\
& =& \tau_{C(N)}(x\tensor  \pi_2!)\tensor [\mu_N] \\  
     & =& \iota_N(x\tensor \pi_2!).
\end{eqnarray*}}

Recall that, if
$x$ is an element in $K_G^*(M)$, then $\iota_M(x)$ is the element of
$KK^G_*(C(M),C(M))$ obtained from  any K-cycle representing $x$ by
noticing that $C(M)$ being commutative, the right action is also a
left action.
Since   $x\otimes f!=[p]\otimes\iota_M(x)\otimes f!$, where $[p]$ is the
element of $K^*_G(M)$ corresponding to the inclusion
$\C\hookrightarrow C(M)$, we can see that if $(\phi,T,\xi)$ is a
K-cycle representing $\iota_M(x)\otimes f!$, then $\iota_N(x\otimes
  f!)$ can be represented by the K-cycle $(\phi',T,\xi)$ where $\phi'$
  is equal to the (right) action of $C(N)$  on $\xi$. Thus we only have to
  check that $(\phi',T,\xi)$ and $(\phi\circ f,T,\xi)$ represent
  the same class in $KK^G_*(C(N),C(N))$.


Since $f$ is an embbeding, the KK-element $f!$ can be represented by the KK-cycle
$(\phi_{\nu_M},q_{\nu_M}^*\xi_{\nu_M},T_{\nu_M})$ where $q_{\nu_M}^*\xi_{\nu_M}$
is viewed as a $C(N)$-Hilbert module via the inclusion
$C_0(\nu_M)\hookrightarrow C(N)$ and where $\phi_{\nu_M}$ is the
representation induced by $\phi_0\colon C(M)\to C_b(\nu_M);h\mapsto h\circ
q_{\nu_M}$. Thus we can choose the $K$-cycle $(\phi,T,\xi)$
representing $\iota_M(x)\otimes f!$ in such a
way that 
\begin{itemize}
\item $\xi$ is in fact a $C_0(\nu_M)$-Hilbert module (by associativity of the
  Kasparov product);
\item  $T$ commutes with the action of $C_b(\nu_M)$ viewed as the
  multiplier algebra of $C_0(\nu_M)$ (use an approximate unit and the
  continuity of $T$, observe that $The=heT=hTe$ for all $h\in C_b(\nu_M)$,
  $e\in C_0(\nu_M)$);
\item the  $C(M)$-structure is induced by $\phi_0$.
\end{itemize}
The maps $\nu_M\to\nu_M;v\mapsto tv$ for $t$ in $[0,1]$ provide a
homotopy between  $\phi_0\circ f$ and  the restricton map $C(N)\to
C_b(\nu_M)$ and this homotopy commutes  with $T_{\nu_M}$. But the
restriction map corresponds precisely to the $C(N)$-Hilbert
module structure on  $q_{\nu_M}^*\xi_{\nu_M}$, and hence we get the
result.
\end{proof}

\subsection{Gysin and Poincar\'e duality if $\boundary M\ne \emptyset$}

Our key tool to study $G$-manifolds with boundary is the double.

For a manifold  $X$  with boundary $\partial X$, let us define the double
$\db X$  of $X $ to be  the manifold obtained by
identifying the two copies of the boundaries $\partial X$ in $X\amalg X$. To
distinguish the two copies, we write $\db X=X\cup X^-$. Let
$p_X\colon \db X\to X$ be the map obtained by identifying the two 
copies of $X$. 
 Let $\j_X\colon X\to\db X$ be the map induced by the
inclusion of the first factor of $X\amalg X$, and let us set
$g_X=\j_X\circ p_X$.
It is
straightforward to check that if $X$ is a \spg compact manifold with
boundary $\partial X$,
then $\db X$ is a \spg  compact manifold without boundary. The given
orientation or $G$-$\spin^c$ structure on the first copy of $X$ and the
negative structure on the second copy $X^-$ together define canonically a
$G$-$\spin^c$ structure on $\db X$. Note that $p_X\circ j_X=\id_X$. Therefore,
the exact sequence 
\begin{equation*}
0\to C_0(X^-\setminus\boundary X) \xrightarrow{\iota_X}
C(\db X)\xrightarrow{j_X^*} C(X)\to 0
\end{equation*}
has a split, and we get induced split exact
sequences in K-theory and K-homology. Note that in general there is no
split
of $\iota_X$ by algebra homomorphisms, but the corresponding split in K-theory
and K-homology of course exists nonetheless. We use the corresponding sequence
and split with the roles of $X$ and $X^-$ interchanged.
will
We now state the workhorse lemma for the extension of the treatment of Gysin
homomorphism from closed manifolds to manifolds with boundary.

\begin{lemma}\label{lem:workhorse}
  Poincar\'e duality for $M$ is a direct summand of Poincar\'e duality for
  $\db M$, i.e.~the following diagram commutes, if $M$ is a compact
  $G$-$\spin^c$ manifold with boundary.
  \begin{equation}\label{eq:workhorsesquare}
    \begin{CD}
      K^*_G(M) @>{p_M^*}>> K^*(\db M) @>{j_M^*}>> K^*(M)\\
      @VV{\PD_M}V @VV{\PD_{\db M}}V @VV{\PD_M}V\\
        {K_{n-*}^G(M,\partial M)} @>{s}>> K_{n-*}^G(\db M) @>{(\iota_M)^*}>>
      {K_{n-*}^G(M,\partial M)}. 
    \end{CD}
  \end{equation}
  Here,  $s$ is the K-homology split
  mentioned above, and $n=\dim(M)=\dim(\db M)$.
\end{lemma}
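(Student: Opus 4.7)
The key input is the restriction property of fundamental classes: for any $G$-invariant open subset $U$ of a $G$-$\spin^c$-manifold $N$ (with $\spin^c$-structure inherited from $N$), the fundamental class $[U]$ equals the Kasparov product $[\iota_U] \otimes_{C(N)} [N]$, where $\iota_U\colon C_0(U) \hookrightarrow C(N)$ is the ideal inclusion. This is the equivariant restriction property already recalled earlier in the paper.

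First, I would observe that the horizontal compositions of both rows of \eqref{eq:workhorsesquare} are the identity. For the top row this is clear: $j_M^* \circ p_M^* = (p_M \circ j_M)^* = \id$, since $p_M \circ j_M = \id_M$. For the bottom row, the algebra-level splitting by $p_M^*\colon C(M) \to C(\db M)$ of the exact sequence $0 \to C_0(M^- \setminus \boundary M) \to C(\db M) \to C(M) \to 0$ induces, by functoriality of the six-term sequence, a corresponding splitting in K-homology. By construction, $s$ is the resulting section of $\iota_M^*$, so $\iota_M^* \circ s = \id$.

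Next, I would prove that the right square commutes. For $y \in K^*_G(\db M)$, associativity of the Kasparov product yields
\begin{equation*}
\iota_M^*\bigl(\PD_{\db M}(y)\bigr)
= [\iota_M] \otimes_{C(\db M)} \iota_{\db M}(y) \otimes_{C(\db M)} [\db M].
\end{equation*}
By the restriction property, $[\iota_M] \otimes_{C(\db M)} [\db M]$ equals the fundamental class of the open submanifold $M^- \setminus \boundary M \subset \db M$ (with the $\spin^c$-structure inherited from $\db M$), which under the identification $M^- = M$ used to formulate the diagram becomes $[M \setminus \boundary M]$. The remaining factor $[\iota_M] \otimes_{C(\db M)} \iota_{\db M}(y)$ matches $\iota_{M, \boundary M}(j_M^*(y))$ by naturality of the $\iota$-construction, since localizing $y$ along the ideal $C_0(M^- \setminus \boundary M)$ amounts to restricting $y$ to the relevant copy. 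Assembling these pieces gives $\iota_M^*(\PD_{\db M}(y)) = \PD_M(j_M^*(y))$, so the right square commutes.

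Commutativity of the left square then follows by a diagram chase. For $x \in K^*_G(M)$, applying $\iota_M^*$ to both $\PD_{\db M}(p_M^*(x))$ and $s(\PD_M(x))$ yields $\PD_M(x)$, using the right square together with $j_M^* p_M^* = \id$ on one side and $\iota_M^* s = \id$ on the other. To conclude equality of these two elements of $K^G_{n-*}(\db M)$, one invokes the symmetric statement --- with the roles of $M$ and $M^-$ interchanged --- to check that they also agree modulo the complementary summand in the direct-sum decomposition of $K^G_{n-*}(\db M)$ induced by the splitting. I expect the main technical obstacle to be the bookkeeping of the opposite $\spin^c$-structure on the second copy $M^-$ of $\db M$: this must be tracked carefully so that the restriction of $[\db M]$ under $\iota_M$ really produces $[M \setminus \boundary M]$ with the correct sign, rather than its negative.
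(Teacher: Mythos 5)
Your argument for the right square is essentially correct and somewhat more direct than the paper's (which verifies it by comparing the two Poincar\'e-duality compositions through a large naturality diagram). One bookkeeping remark on the sign worry you raise: the $\iota_M$ in the lemma is the inclusion $C_0(M\setminus\boundary M)\hookrightarrow C(\db M)$ of the interior of the \emph{first} copy --- the paper announces this just before the lemma ("we use the corresponding sequence and split with the roles of $X$ and $X^-$ interchanged"), and its proof works throughout with $M^\circ$. On that copy $[\db M]$ restricts to $[M^\circ]$ with $M$'s original $\spin^c$-structure, so there is no sign to track; your concern arises only because you took $\iota_M$ to be the inclusion for the second copy $M^-$.

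The genuine gap is in the left square. Having checked that $\iota_M^*$ kills the difference $\PD_{\db M}(p_M^*x)-s(\PD_M x)$, you cannot conclude by "invoking the symmetric statement with $M$ and $M^-$ interchanged." The splitting decomposes $K^G_{n-*}(\db M)$ as $\im(s)\oplus\ker(\iota_M^*)=\im(s)\oplus\im((j_M)_*)$, and the complementary projection is $(j_M)_*\,p_{M,*}$, not $\iota_{M^-}^*$: the pair $(\iota_M^*,\iota_{M^-}^*)$ has nontrivial common kernel in general (for $M=D^2$, $\db M=S^2$, the class of a point in $K_0(S^2)$ is killed by both restrictions), so agreement under both does not force equality. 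Moreover there is no "symmetric right square" for $p_{M,*}\circ\PD_{\db M}$, since $p_M^*$ is not induced by an open inclusion and so is not governed by the restriction property of fundamental classes. What is actually needed --- and is the heart of the paper's proof --- is the vanishing $p_{M,*}\PD_{\db M}\,p_M^*=0$. The paper reduces this, via the $K_G^*(M)$-module structure of all groups involved, to showing $p_{M,*}[\db M]=0$, and then proves that geometrically by exhibiting $\db M$ as the boundary of $Y=M\times[-1,1]/\!\sim$ and invoking the long exact sequence of the pair together with the $G$-equivariant contractibility of $C_0(M^\circ\times[0,1))$. That geometric input is not recoverable from a diagram chase, so your proof of the left square is incomplete.
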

\begin{proof}
  This result is certainly well known. For the convenience of the reader, we
  give a proof of it here.

  We use the following alternative description of the Poincar\'e duality
  homomorphism. For a compact manifold $M$ (possibly with boundary) it is the
  composition 
  \begin{equation*}
    KK(\{*\},M)\xrightarrow{\tau_{C_0(M^\circ)}} KK(M^\circ,M^\circ\times M) \xrightarrow{\mu}
    KK(M^\circ,M^\circ)\xrightarrow{\tensor[M^\circ]} KK(M^\circ,\{*\}).
  \end{equation*}
  Here we abbreviate $KK$ for $KK^G_*$ (and ask the reader to add the correct
  grading), and write {$KK(X,Y)=KK(C_0(X),C_0(Y))$ for two spaces $X,Y$}, $\mu$
  is the map induced by the multiplication $C_0(M\times M^\circ)=C(M)\tensor
  C_0(M^\circ)\to C_0(M^\circ)$.
will
  Naturality of KK-theory and of the fundamental class (under inclusion of
  open submanifolds) now
  gives the following commutative diagram, writing $N=\db M$
{\small  \begin{equation*} 
    \begin{CD}
      KK(\{*\},M) @>{\tau_{C_0(M^\circ)}}>> KK(M^\circ,M^\circ\times M) @>>{\mu}> KK(M^\circ,M^\circ)\\
     @AA{j^*}A  @AA{j^*}A @|\\
      KK(\{*\},N)@>{\tau_{C_0(M^\circ)}}>> KK(M^\circ,M^\circ \times N) @>\mu>>
   KK(M^\circ,M^\circ) @>>{\tensor [M^\circ]}> KK(M^\circ,\{*\})\\
      @VVV  @VV{\iota_*}V @VV{\iota_*}V @|\\
      KK(N,N\times N) @>{\iota_M^*}>> KK(M^\circ,N\times N) @>\mu>> 
      KK(M^\circ, N) @>{\tensor [N]}>> KK(M^\circ,\{*\})\\
      @| @AA{\iota^*}A @AA{\iota^*}A @AA{\iota^*}A\\
      KK(N,N\times N) @= KK(N,N\times N) @>\mu>> 
      KK(N, N) @>{\tensor [N]}>> KK(N,\{*\})\\
    \end{CD}
  \end{equation*}
}
  Walking around the boundary of this diagram shows that the right square of
  \eqref{eq:workhorsesquare} is commutative.

  The commutativity of the left square of \eqref{eq:workhorsesquare} is more
  difficult to show, in 
  particular since $s$ is not induced from an algebra homomorphism. However,
  from what we have just seen we can conclude that
  \begin{equation}  \label{eq:withiota}
    \iota^*\PD_{\db M}p^* = \PD_M{j^*p^*}\stackrel{{p\circ j=\id_M}}{=}\PD_M = \iota^*s\PD_M.
  \end{equation}

The section $s$ is  
  characterized by the properties $\iota^*s=\id$ and $sp_*=0$. Therefore, to
  be allowed to ``cancel'' $\iota^*$ in Equation \eqref{eq:withiota} we have
  to show that $\PD_{\db M}p^*$ maps to the image of $s$, i.e.~to the kernel
  of $p_*$. We must show that
  \begin{equation}\label{eq:null}
 0=p_*\PD_N p^*\colon KK(\{*\},M)\to KK({N},\{*\}).
 \end{equation} 
  The relevant groups, namely $K^*(M)$, $K^*(\db M)$, $K_*(\db M)$,
  $K_*(M^\circ)$ all are $K^*(M)$-modules, and all homomorphisms are
  $K^*(M)$-module homomorphisms. The module structure on $K^*(M)$ is induced
  via the ring structure of $K^*(\db M)$ and the map $p^*$. $K_*(\db M)$ is a
  $K^*(\db M)$-module via the cap product, and via $p^*$ it therefore also
  becomes a $K^*(M)$-module; the cap product also gives the $K^*(M)$-module
  structure on $K_*(M^\circ)$. 

As $K^*(M)$ is generated by $1$ as a
  $K^*(M)$-module, Equation \eqref{eq:null} follows ifwill 
  \begin{equation*}
0=p_*\PD_{\db M}p^*1=
  p_*[\db M]. 
\end{equation*}

To see this, remember that every double of a manifold with boundary is
canonically a boundary, namely $\db
M= \boundary(Y:=(M\times [-1,1]/\sim))$, where the equivalence relation is
generated by $(x,t)\sim (x,s)$ is $x\in\boundary M$ and $s,t\in
[-1,1]$. Observe that this construction is valid in the world of $G$-$\spin^c$
manifolds. Note that $p_M\colon \db M\to M$ extends to $$P\colon Y=(M\times
[-1,1]/\sim) \to (M\times [0,1]/\sim); (x,t)\mapsto (x,\abs{t}).$$ From the long
exact sequences of the pairs $(Y,\db M)$ {and $(M\times [0,1]/\sim,M{\times\{1\}})$}, we have the following commutative
diagram 
\begin{equation*}
  \begin{CD}
    K_{\dim M+1}(Y^\circ) @>{P_*}>> K_{\dim M+1}(C_0((M\times
    [0,1]/\sim)\setminus M\times \{1\})) =\{0\}\\
    @VV{\boundary}V @VV{\boundary}V\\
    K_{\dim M}(\db M) @>{p_*}>> K_{\dim M}(M).
  \end{CD}
\end{equation*}
In this diagram, $[Y^\circ]$ {is, according to Lemma 
\ref{lem-boundary}, mapped  under the boundary} to $[\db
M]$. Therefore, by naturality, $p_*[\db M]=\boundary P_*([Y^\circ])$. However,
\begin{equation*}
(M\times [0,1]/\sim) \setminus M\times \{1\}= M^\circ\times [0,1),
\end{equation*}
and
$C_0(M^\circ\times [0,1))$ is $G$-equivariantly contractible, hence its
equivariant K-homology vanishes. The assertion follows.
\end{proof}

\begin{definition}\label{def:Gysin_boundary}
  Let now $f\colon M\to N$ be a $G$-equivariant continuous map between
  $G$-$\spin^c$ manifolds with boundary such that $f(\boundary M)\subset
  \boundary N$. Then we define $f!\colon K^*_G(M)\to K_G^{*+n-m}(N)$ as the
  composition
  \begin{equation*}
    f!=\PD_N^{-1} f_* \PD_M.
  \end{equation*}
\end{definition}

\begin{remark}\label{rem:other_defs}
  Note that this is consistent with the definition for closed manifolds and
  smooth maps by the
  considerations of Section \ref{sec:Gysin_and_PD}. Lemma \ref{lem-gysin}
  holds in the general case by definition.

However, at least in special situations, we can also define the Gysin map
geometrically. Let, for example, $M$ be a  \spg compact manifold with
boundary, let $W$ be a \spg vector bundle 
over $M$, let $Z$ be the manifold obtained from vector bundle
modification with respect to $W$ and, as above,  let $\pi\colon Z\to M$ and
$s\colon M\to Z$ will
be the canonical projection or the ``north pole'' section of
$\pi$, respectively. The vector bundle $W$ is the normal vector bundle  of $M$ in $Z$ (with respect to the
embedding 
$s$) and is therefore   a $G$-invariant tubular open neighbourhood of
$M$. 

  We can then define the Gysin element $s!\in KK^G(C(M),C(Z))$ associated to
  $s$ as we did for manifold without boundary by $s!=\beta_{W}\otimes
  [\theta_M]$, where $\theta_M\colon C_0(W)\to C(Z)$ is the morphism induced
  by the inclusion of ${W}$ into $Z$. The Gysin homomorphism can then be
  defined correspondingly.

  With arguments similar to those of Section \ref{sec:Gysin_and_PD} we can
  show that with this definition Lemma \ref{lem-gysin} holds, so that our
  Definition \ref{def:Gysin_boundary} is consistent with the geometric
  one. The proof would  also use Lemma \ref{lem:workhorse}, that $\PD_M$ is a
  direct summand of $\PD_{\db M}$.
\end{remark}

\begin{lemma}\label{lem:boundary_and_PD_vorbereteit}
  If $i\colon M\into L$ is as in Lemma \ref{lem-boundary}, then
  \begin{equation*}
    \boundary \tensor \iota_{L,\boundary L}(x) = (-1)^{\deg x}\iota_{M,\boundary
    M}(i^*x)\tensor\boundary\qquad\forall x \in K_G^*(L),
  \end{equation*}
{ where  $\partial\in
    KK(C_0(M^\circ),C_0(L^\circ))$ is  the boundary
  element of the exact
  sequence of $C_0(L^\circ)\into C_0(L^\circ\cup M^\circ)\onto
  C_0(M^\circ)$}  as in Lemma \ref{lem-boundary} (here we abbreviate
  $L^\circ=L\setminus \boundary L$).
\end{lemma}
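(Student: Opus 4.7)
My plan is to reduce the identity to the naturality of the $KK$-theoretic boundary map under a morphism of semi-split extensions that is induced by the class $x$. The key observation is that $\iota_{L,\partial L}(x)$, $\iota_{M,\partial M}(i^*x)$, and the analogous element on $C_0(L^\circ\cup M^\circ)$ all arise by the same construction: restricting the representable class $x\in RK_G^*(L)$ to the appropriate open subset and then regarding a representing KK-cycle as a bimodule where the left action is the $C_0$-Hilbert module structure. Because $x$ is defined globally on $L$, these restrictions are compatible with the inclusions and quotients in the short exact sequence
\begin{equation*}
0\to C_0(L^\circ)\to C_0(L^\circ\cup M^\circ)\to C_0(M^\circ)\to 0,
\end{equation*}
so we obtain a morphism (in $KK^G$) of this extension to itself whose components are exactly $\iota_{L,\partial L}(x)$, $\iota_{L^\circ\cup M^\circ}(x)$, and $\iota_{M,\partial M}(i^*x)$.

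First I would verify carefully that these three restrictions do intertwine, under Kasparov product, with the inclusion $C_0(L^\circ)\hookrightarrow C_0(L^\circ\cup M^\circ)$ and the surjection $C_0(L^\circ\cup M^\circ)\to C_0(M^\circ)$. Since $x$ can be represented by a $C_0$-Hilbert module cycle over $C(L)$, the restriction to any open subset is represented by the tensor product of this module with $C_0$ of the subset, and the intertwining is then an exercise in tracking down the module structures. Having established this compatibility, the formula becomes a direct instance of the naturality of the boundary class $\partial$ under morphisms of extensions in $KK^G$ (see \cite[Section 7]{kasparov81} or \cite[17.8]{MR1656031}): for a morphism of semi-split extensions, the two possible ways of taking Kasparov product with $\partial$ agree, up to the Koszul sign coming from the graded commutation of the degree-$1$ class $\partial$ with the degree-$\deg x$ class $\iota(x)$.

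The sign $(-1)^{\deg x}$ then appears automatically from this graded naturality: pushing the degree-$\deg x$ element past the degree-$1$ boundary class picks up exactly this sign according to the Koszul rule for the exterior product, which is built into the Kasparov product.

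The step I expect to be the main obstacle is the careful verification of compatibility in the first paragraph, since $\iota(x)$ is not induced from an algebra homomorphism and so "morphism of extensions" must be interpreted at the level of KK-elements rather than of $C^*$-algebra maps; one must exhibit explicit Kasparov cycles witnessing the equalities on both sides of the two squares of the extension diagram. Once that is in place, the actual naturality statement is classical and the sign tracking is essentially bookkeeping.
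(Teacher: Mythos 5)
Your proposal captures the right ingredients — reformulating both sides as images of $x$ under maps compatible with the restriction to $M$, invoking naturality of the boundary element, and obtaining the sign $(-1)^{\deg x}$ from graded commutativity of the exterior Kasparov product — and the paper's proof is indeed built from exactly these pieces. The key difference is that the paper sidesteps the obstacle you flag at the end. Rather than trying to set up a ``morphism of extensions'' whose components are the genuine degree-$\deg x$ KK-elements $\iota_{L,\partial L}(x)$, $\iota_{M,\partial M}(i^*x)$, etc.\ (for which a clean naturality-of-$\partial$ statement would first have to be formulated and proved), the paper factors $\iota_{L,\partial L}$ as
\begin{equation*}
KK(\{*\},L)\xrightarrow{\tau_{C_0(L^\circ)}} KK(L^\circ,L\times L^\circ)\xrightarrow{\otimes\mu} KK(L^\circ,L^\circ),
\end{equation*}
an external tensor product followed by the Kasparov product with the honest $*$-homomorphism $\mu$. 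Graded commutativity of the external product then moves $\partial$ past $\tau(x)$ at the cost of $(-1)^{\deg x}$, and what remains is to compare two compositions of Kasparov products with actual $*$-homomorphisms. This reduces the whole commutativity to a single diagram of short exact sequences of $C^*$-algebras (built from $\mu$, the inclusions, and $i^*\times\id$) to which the \emph{standard} naturality of the boundary element for $*$-homomorphism morphisms of extensions applies directly.

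So your proof is not wrong in outline, but the step you yourself identify as ``the main obstacle'' (exhibiting explicit Kasparov cycles witnessing the two commuting squares when the vertical arrows are not algebra homomorphisms) is exactly the content of the lemma, and neither Kasparov's Section~7 nor Blackadar's 17.8 gives you naturality of $\partial$ for morphisms of extensions given by general KK-classes out of the box. The paper's $\tau$/$\mu$ factorization of $\iota$ is what makes this step disappear: it moves the degree-$\deg x$ class entirely into an external tensor factor, leaving only $*$-homomorphisms in the extension-diagram position. You should incorporate that decomposition explicitly if you want your argument to close.
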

\begin{proof}
  We first recall a KK-description of $\iota_{L,\boundary L}$. It is given by the
  composition
  {\begin{equation*}
    KK(\{*\},L)\xrightarrow{\tau_{C_0(L^\circ)}}
    KK(L^\circ,L\times L^\circ)
    \xrightarrow{\tensor\mu} KK(L^\circ,L^\circ)
  \end{equation*}}
  where $\mu$ is the multiplication homomorphism.
 By graded commutativity  of the exterior Kasparov product, we
  therefore get that $\partial\tensor \iota_{L,\boundary L}$ equals the composition
  \begin{equation}\label{eq:bla}
     KK(\{*\},L)\xrightarrow{\tau_{M^\circ}}
       KK(M^\circ,L\times M^\circ)
       \xrightarrow{{(-1)^{\deg}\cdot}\tensor\partial}
     KK(M^\circ,L\times L^\circ) \xrightarrow{\mu}
     KK(M^\circ,L^\circ).
   \end{equation}

  Now observe that we have commutative diagrams of short exact sequences
  \begin{equation*}
    \begin{CD}
    C_0(L\times L^\circ) @>>> C_0(L\times ({L^\circ}){\cup M^\circ}) @>>>
    C_0(L\times M^\circ)\\
     @VVV @VV{=}V @V{i^*\times \id_{M^\circ}}VV\\
      C_0(L\setminus M\times  M^\circ \cup L\times L^\circ)
      @>>> C_0(L\times (L^\circ\cup M^\circ)) @>>> C_0(M\times M^\circ)\\
       @VV{\mu}V @VV{\mu}V @V{\mu}VV\\
       C_0(L^\circ) @>>> C_0(L^\circ\cup M^\circ) @>>> C_0(M^\circ)
  \end{CD}
\end{equation*}
Using naturality of the boundary map, we observe that the composition of the
last two arrows of \eqref{eq:bla} coincides with the composition
{\begin{equation*}
  KK(M^\circ,L\times M^\circ) \xrightarrow{i^*}
  KK(M^\circ,M\times  M^\circ )
  \xrightarrow{\tensor\mu}
  KK(M^\circ,M^\circ) \xrightarrow{\tensor\partial}
  KK(M^\circ,L^\circ). 
\end{equation*}}
As $i^*$ commutes with the exterior product with $C_0(M^\circ)$, this implies
the assertion.
\end{proof}

\subsection{Proof of Lemma \ref{lem-boundary}}
\label{sec:appendixprooflemma}

We finish by proving Lemma \ref{lem-boundary}. Recall that
it states
\begin{lemma}
  Let $L$ be a \spg manifold with boundary $\partial L$, let $M$ be a
  $G$-invariant submanifold of $\partial L$ with boundary $\partial M$ such that
  $\dim M=\dim L-1$ and let   $\partial\in
KK^G_1(C_0( M\setminus\partial M),C_0(L\setminus\partial L))$ be the boundary element
associated to the exact sequence
$$0\to C_0(L\setminus\partial L)\to C_0( (L\setminus \partial L)
 \cup (M\setminus\partial M))\to C_0(M\setminus \partial M)\to 0.$$
Then $[\partial]\otimes [L\setminus \partial L]=[M\setminus \partial M]$.
\end{lemma}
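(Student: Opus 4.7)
The plan is to reduce the statement to a one-dimensional model computation by exploiting an equivariant collar neighborhood of $M$ in $L$, naturality of boundary maps, and the external product decomposition of both the boundary element and the fundamental class.

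First, since $G$ is a compact Lie group and $M\setminus\partial M \subset \partial L$ is a $G$-invariant codimension-$0$ submanifold of the boundary, the equivariant collar neighborhood theorem gives a $G$-equivariant open $\spin^c$-embedding $U \cong (M\setminus\partial M) \times [0,1) \hookrightarrow (L\setminus\partial L) \cup (M\setminus\partial M)$, with $U \cap (L\setminus\partial L) \cong (M\setminus\partial M) \times (0,1)$. This yields a morphism of short exact sequences with identity on the quotient $C_0(M\setminus\partial M)$ and extension by zero $j_*\colon C_0((M\setminus\partial M) \times (0,1)) \hookrightarrow C_0(L\setminus\partial L)$ on the ideal:
\begin{equation*}
\begin{CD}
0 @>>> C_0((M\setminus\partial M) \times (0,1)) @>>> C_0((M\setminus\partial M) \times [0,1)) @>>> C_0(M\setminus\partial M) @>>> 0\\
@. @VV{j_*}V @VVV @VV{\id}V \\
0 @>>> C_0(L\setminus\partial L) @>>> C_0((L\setminus\partial L) \cup (M\setminus\partial M)) @>>> C_0(M\setminus\partial M) @>>> 0.
\end{CD}
\end{equation*}
By naturality of the boundary map, the given $\partial$ factors as $\partial_U \otimes [j_*]$, where $\partial_U$ is the boundary element of the top sequence. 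Moreover, the restriction of $[L\setminus\partial L]$ under $j_*$ is $[(M\setminus\partial M) \times (0,1)]$ (as recalled in the discussion preceding Theorem \ref{theo:main}). It thus suffices to prove $\partial_U \otimes [(M\setminus\partial M) \times (0,1)] = [M\setminus\partial M]$.

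Next, the top sequence is obtained by tensoring $C_0(M\setminus\partial M)$ with the model sequence $0 \to C_0((0,1)) \to C_0([0,1)) \to \complexs \to 0$, so $\partial_U = \tau_{C_0(M\setminus\partial M)}(\partial_0)$ where $\partial_0 \in KK^G_1(\complexs, C_0((0,1)))$ is the model boundary element (with trivial $G$-action). The product formula for Dirac operators on $\spin^c$-manifolds gives $[(M\setminus\partial M) \times (0,1)] = [M\setminus\partial M] \mathbin{\hat{\otimes}} [(0,1)]$. Using graded commutativity of the exterior Kasparov product (compare \eqref{eq:tensoridentity}), we may rewrite
\begin{equation*}
\partial_U \otimes [(M\setminus\partial M) \times (0,1)] = [M\setminus\partial M] \otimes_{\complexs} \bigl(\partial_0 \otimes_{C_0((0,1))} [(0,1)]\bigr),
\end{equation*}
reducing the claim to the one-dimensional identity $\partial_0 \otimes_{C_0((0,1))} [(0,1)] = 1 \in R(G) = K^G_0(\complexs)$. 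This is Poincar\'e duality for $\reals \cong (0,1)$: since $C_0([0,\infty))$ is equivariantly contractible, the six-term exact sequence forces $\partial_0$ to be a $KK$-equivalence, and comparison with the standard analytic cycle for the Bott element from Appendix \ref{sec:bott-thom} identifies it as the $KK$-inverse of $[(0,1)]$.

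The main obstacle will be bookkeeping: tracking signs in the graded commutativity step against the ``outer normal first'' $\spin^c$-orientation convention of the paper (Section A.1), so that the one-dimensional model identity comes out with the correct normalization rather than up to a sign, and rigorously verifying the equivariant form of the boundary product formula $\partial_U = \tau_{C_0(M\setminus\partial M)}(\partial_0)$. Beyond these verifications, the argument is purely formal: naturality of the boundary map combined with the external product formulas for boundary elements and for fundamental classes.
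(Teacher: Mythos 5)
Your argument follows essentially the same route as the paper's proof: pass to an equivariant collar of $M\setminus\partial M$ inside $(L\setminus\partial L)\cup(M\setminus\partial M)$, use naturality of the boundary element together with the restriction property of fundamental classes to replace $[L\setminus\partial L]$ by $[(M\setminus\partial M)\times(0,1)]$, and then reduce via an external-product decomposition to the one-dimensional model identity $\partial_0\otimes[(0,1)]=1$.

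Two points where the paper's proof is more careful, and where your sketch leaves genuine work undone. First, before invoking the formula $[A\times B]=[A]\hat\otimes[B]$ for fundamental classes, the paper inserts an extra step: it views $M\setminus\partial M$ as an open subset of the double $\mathcal D M$, uses naturality and the restriction property once more, and thereby reduces to the case $\partial M=\emptyset$. This is done precisely so that the cited reference \cite[Prop.~11.2.13]{MR1817560}, which gives the product formula for \emph{closed} $\spin^c$-manifolds, applies cleanly; your proposal applies the product formula directly to the open manifold $(M\setminus\partial M)\times(0,1)$ without justifying that extension. Second, for the one-dimensional model the paper cites \cite[Props.~9.6.7, 11.2.15]{MR1817560}, which directly compute that the boundary of the half-line Dirac class is the point class (``boundary of Dirac is Dirac of boundary''). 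Your observation that $C_0([0,\infty))$ is $G$-contractible shows that $\partial_0$ is a $KK$-equivalence, but by itself this only pins down $\partial_0\otimes[(0,1)]$ up to a unit in $R(G)$; identifying that unit as $+1$ (and not, say, $-1$, given the ``outer normal first'' boundary convention) is exactly the normalization you flag as ``the main obstacle'' but do not actually resolve. Both issues are fixable and your overall decomposition is correct, but as written the proposal defers precisely the two verifications that the paper's proof supplies.
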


\begin{proof}
Using a $G$-invariant metric on $L$ and a corresponding collar, $(0,1]\times
(M\setminus \partial M)$
can be viewed as a $G$-invariant open  neighborhood of   $\{1\}\times
(M\setminus\partial M)$ in  $(L\setminus\partial L)
\cup (M\setminus\partial M)$. Moreover,  the  inclusion $C_0((0,1]\times
(M\setminus \partial M))\hookrightarrow
C_0(( L
\setminus \partial L)\cup (M\setminus\partial M))$  gives rise to the
following commutative diagram with exact rows \green{(write
$L^\circ:=L\setminus\boundary L$, $M^\circ:=M\setminus\boundary M$)
{\small\begin{equation*}
   \begin{CD}
    0  @>>>  C_0(L^\circ) @>>>  C_0( (L^\circ)
\cup (M^\circ)) @>>>  C_0(M^\circ)  @>>> 0 \\
 & &  @AAA @AAA @AAA\\
  0  @>>> C_0((0,1)\times (M^\circ))@>>> C_0((0,1]\times
  (M^\circ)) @>>> C_0( M^\circ)  @>>>  0 .
  \end{CD}
 \end{equation*}
}}
By naturality of the boundary homomorphism and since by \cite[Proposition
11.2.12]{MR1817560} (for the non-equivariant case, but the equivariant one
follows along identical lines) the restriction of
\green{$[L^\circ]$ to $(0,1)\times (M^\circ)$ is
$[(0,1)\times (M^\circ)]$, the statement of the lemma amounts to show that  $$[\partial']\otimes [(0,1)\times  (M^\circ)]=[ M^\circ],$$ where $\partial'\in
KK^G_1(C_0( M^\circ),C_0((0,1)\times (M^\circ)))$ is the boundary element
associated to the bottom exact sequence of the diagram above.
Viewing $ M^\circ$ as an invariant open subset of
$\db M$, using naturality of boundaries in the following commutative diagram
with exact rows
{\small \begin{equation*}
   \begin{CD}
0  @>>> C_0((0,1)\times \db M )@>>> C_0((0,1]\times \db M)
@>>> C_0(\db  M)  @>>>  0 \\
 & &  @AAA @AAA @AAA\\
  0  @>>> C_0((0,1)\times (M^\circ))@>>> C_0((0,1]\times
  (M^\circ)) @>>> C_0( M^\circ)  @>>>  0 ,
  \end{CD}
 \end{equation*}
}
and since the elements $[M^\circ]$ of
$KK^{G}_{*}(C_0(M^\circ),\C)$ and $[(0,1)\times
(M^\circ)]$ of $KK^{G}_{*}(C_0((0,1)\times (M^\circ)),\C)$ are
 the restrictions of $[\db M]$ to $M^\circ$ 
and of $[(0,1)\times \db
 M]$ to $(0,1)\times
(M^\circ)$, respectively, we can indeed assume without loss of
generality that $M$ has no boundary.}

Observe now that in the exact sequence in (non-equivariant) K-homology 
\begin{equation*}
  0\to C_0((0,1))\to C_0((0,1])\to C(\{1\})\to 0
\end{equation*}
by the well known principle that ``the boundary of the Dirac element is
the Dirac element of the boundary'' we indeed
observe $[\partial'']\tensor [(0,1)] =[\{1\}]$ in $KK_0(C(\{1\}),\C)$, compare
\cite[Propositions 9.6.7, 11.2.15]{MR1817560}. We can
now take the exterior Kasparov product of everything with
$[M]\in KK^G_{\dim M}(C_0(M),\complexs)$. By naturality of this Kasparov
product, we obtain $[\partial']\tensor [(0,1)]\tensor [M] = [\{1\}]\tensor
[M]$. Finally, we know that the fundamental class of a product is the exterior
Kasparov product of the fundamental classes, compare again \cite[Proposition
11.2.13]{MR1817560}; 
the equivariant situation follows similarly. This implies the desired relation
$[\partial']\tensor [(0,1)\times M]=[M] \in KK_{\dim M}(C_0(M),\C)$.


\end{proof}

\end{appendix}

{\small
\bibliographystyle{plain}
\bibliography{K_homology}
}

\end{document}